\numberwithin{equation}{section}
\theoremstyle{plain}
\newtheorem{theorem}{Theorem}[section]
\newtheorem{lemma}{Lemma}[section]
\newtheorem{definition}{Definition}[section]
\newtheorem{remark}{Remark}[section]
\def\bC{\mathbb C}
\def\bR{\mathbb R}
\def\bN{\mathbb N}
\def\bP{\mathbb P}
\def\bE{\mathbb E}
\def\bS{\mathbb S}
\def\cA{\mathcal A}
\def\cP{\mathcal P}
\def\cS{\mathcal S}
\def\cE{\mathcal E}
\def\cL{\mathcal L}
\def\bfa{\textbf{a}}
\def\bfx{\textbf{x}}
\def\Comp{\mathrm{Comp}}
\def\Incomp{\mathrm{Incomp}}
\def\dist{\mathrm{dist}}
\def\Num{\mathrm{Num}}
\def\Den{\mathrm{Den}}
\def\Span{\mathrm{Span}}
\def\Tr{\mathrm{Tr}}
\def\Var{\mathrm{Var}}
\def\Cov{\mathrm{Cov}}
\newcommand\bx{\mathbf{x}}
\newcommand\Xn{X^{(n)}}
\newcommand\Yn{Y^{(n)}}
\newcommand\Zn{Z^{(n)}}
\begin{document}

\begin{frontmatter}

\title{On eigenvalue distributions of large auto-covariance matrices}
  
\runtitle{Eigenvalue distributions of large auto-covariance matrices}

\begin{aug}
	\author{\fnms{~ Jianfeng} \snm{Yao~}\ead[label=e2]{jeffyao@hku.hk}}
	\and
	\author{\fnms{~ Wangjun} \snm{Yuan}\ead[label=e3]{ywangjun@connect.hku.hk}}
	
    \affiliation{The University of Hong Kong}
	\runauthor{J. Yao \&  W. Yuan}

	\address{ 
	Department of Statistics and Actuarial Science\\
	The University of Hong Kong\\
	\printead{e2}
	}

	\address{ 
	Department of Mathematics \\
	The University of Hong Kong\\
	\printead{e3}
	}   

\end{aug}

\begin{abstract}
	In this article, we establish a limiting distribution for eigenvalues of a class of auto-covariance matrices. The same distribution has been found in the literature for a regularized version of these auto-covariance matrices. The original non-regularized auto-covariance matrices are non invertible which introduces supplementary difficulties for the study of their eigenvalues through Girko's Hermitization scheme. The key result in this paper is a new polynomial lower bound for the least singular value of the resolvent matrices associated to a rank-defective quadratic function of a random matrix with independent and identically distributed entries. Another improvement in the paper is that the lag of the auto-covariance matrices can grow to infinity with the matrix dimension.
\end{abstract}
  
\begin{keyword}[class=AMS]
	\kwd[Primary ]{60B20,~15B52}
\end{keyword}

\begin{keyword}
	\kwd{Girko's Hermitization principle}
	\kwd{Eigenvalue distribution}
	\kwd{Large auto-covariance matrix}
	\kwd{Least singular value}
\end{keyword}

\end{frontmatter}

\section{Introduction}

Most of matrix ensembles studied in random matrix theory are either Hermitian or unitary. General random matrices without such symmetry or invariance are much less studied as their eigenvalues can lay everywhere in the complex plane. Typically eigenvalues of non-Hermitian matrices are much more unstable than the Hermitian ones, and need new mathematical tools for their study. These tools have taken long time to emerge as illustrated by the history of the circular law. The law states that the empirical spectral distribution of a $n\times n$ random matrix  with i.i.d entries of mean 0 and variance $1/n$ converges to the uniform distribution on the unit disk of the plane. When the common distribution is complex Gaussian and as early as in 1964, \cite{Ginibre1964} established the circular law in expectation (although the abstract of the paper ended with a quite confusing statement that ``the limit of the eigenvalue density as $n\to\infty$ is constant over the whole complex plane''). The real Gaussian equivalent of Ginibre's result was later established by \cite{Edelman1997}. The general non-Gaussian case was first tackled by \cite{Girko1984} who coined  the name of circular law and more importantly, introduced a powerful analytic tool known as Girko's Hermitization which is followed in all the subsequent papers. His own result remains however controversial as few key steps in his argument, were deemed not fully justified at the time \cite[Section 2.8]{Tao2012}. \cite{Bai1997} provided a rigorous proof of the circular law assuming a few additional conditions on the moments and density functions of the entries. These non necessary conditions are afterwards removed in several subsequent papers before reaching the final circular law with minimal conditions established in \cite{Tao2010}. We refer to \cite{Bordenave2012} for a more detailed description on these different episodes of the circular law.

One immediately notices that such multi-decade long effort was only about the simplest non-Hermitian matrix filled with i.i.d. entries. Recent progress is made with a more involved matrix ensemble known as {\em structured random matrices} of the form $A_n\circ X^{(n)} + B_n $ where the random matrix $ X^{(n)}$ has i.i.d. entries as in the circular law and $(A_n)$ and $(B_n)$ are two sequences of deterministic matrices (the $\circ$ denotes the Hadamard product), see \cite{Cook2018}.   This model includes the class of random matrices with profile \cite{Cook2018b} with $B_n\equiv 0$ and $A_n(i,j)=\sigma(i/n,j/n)$ for $(i,j)\in [n]^2$ and a scalar profile function $\sigma: (0,1)^2 \to (0,\infty)$, and the class of band matrices with $B_n\equiv 0$ and $A_n(i,j)=0$ if $|i-j|>k $ for some given bandwidth $k\ge 1$. The limiting distribution of random matrices with profiles is obtained in \cite{Cook2018b} where a key ingredient is a polynomial lower bounds for the least singular value of the associated resolvent matrices established in \cite{Cook2018}.

In this paper we study a particular random matrix of the following form. Let $N = N(n)$ and $ 1\le k = k(n) < n$ be sequences of positive integers varying with $n \in \bN_+$. For each $n$, consider a rectangular $N\times n$ random matrix $X^{(n)} =(X_{ij}^{(n)}  )_{1 \le i \le N, 1 \le j \le n} \in \bC^{N \times n}$ with i.i.d. complex-valued entries with mean 0 and variance $1/n$. The matrix of interest is
\begin{align} \label{def-Y}
  Y^{(n)} = 
  X^{(n)}  A^{(n)} (X^{(n)} )^*, 
\end{align}
where $(A^{(n)})$ is the sequence of deterministic matrices with entries $A_{ij}^{(n)} = \delta_{i=j+k}$, i.e. $A^{(n)}$ is of the form
\begin{align} \label{def-matrix A}
  A^{(n)} = \left(
  \begin{matrix}
	0 & 0 \\
	I_{n-k} & 0
  \end{matrix}
  \right),
\end{align}

The non-Hermitian matrix $Y^{(n)}$ originates from high-dimensional time series analysis. Write   $X^{(n)}$  in function of its $n$ column vectors of dimension $N$ as  in  $ \Xn=(\bx_1,\ldots\bx_n)$. Clearly,
\begin{align*}
	\Yn = \bx_{k+1} \bx_1^* +  \cdots + \bx_n\bx_{n-k}^*.
\end{align*}
In this form the matrix $\Yn$ is  seen as  the so-called {\em lag-$k$ auto-covariance matrix} of the time series $(\bx_j)_{1\le j \le n}$ in the space $\bC^N$ and observed at time $1\le j \le n$. The case of $k=1$ would be of special interest and we denote it as $\Yn_{1}$. The spectral property of the matrix has a fundamental role for the analysis of the series. For example it helps to test the hypothesis whether the series is a {\em white noise}, that is the series is indeed an i.i.d. sequence. The limiting distribution of the singular values of $Y_1^{(n)}$ has been found in \cite{Li2015,Wang2016}; it has been applied to high-dimensional statistics in \cite{Li2017,Li2019}.

To fix the discussions, throughout  the paper the dimension parameters are taken to satisfy the following asymptotic scheme:
\begin{align} \label{eq-1.0-N/n}
	\lim_{n \rightarrow \infty} \dfrac{N}{n} = \gamma_0 \in (0,\infty), \quad
	\lim_{n \rightarrow \infty} \dfrac{k}{n} = \gamma_1 \in [0,1).
\end{align}
Very few is known on the eigenvalue distribution of the matrix $\Yn$. Simulation and plots are given in the book \cite{BoseBook} for (see Figure 8.1 there).
In a recent paper \cite{Bose2019}, the authors consider a variant of $\Yn_{1}$, namely
\begin{equation} \label{def-Z}
	Z^{(n)} =\bx_{2} \bx_1^* + \cdots + \bx_n \bx_{n-1}^* + \bx_1 \bx_{n}^*
	= \Yn_{1} +\bx_1 \bx_{n}^*,
\end{equation}
and established that the empirical spectral distribution of $\Zn$ converges to  a deterministic limiting distribution $\mu^{(\gamma_0)}$ in probability (this distribution will be detailed later).
Therefore the two matrices $\Yn_{1}$ and $\Zn$ differ only by the rank-one matrix $\bx_1 \bx_{n}^*$. However due to the already mentioned high spectral instability, rank-one perturbations can preserve or destroy the spectrum of the original matrix depending on their nature.
In other words, the existence of the LSD $\mu^{(\gamma_0)}$ for $Z^{(n)}$ does not imply anything a priori on the asymptotic properties of the lag-1 auto-covariance matrix $\Yn_{1}$.

Actually the present paper aims at establishing the LSD for the general lag-$k$ auto-covariance matrix $\Yn$ under reasonable conditions.
Note that while in \cite{Bose2019}, the matrix $Z^{(n)}$ corresponds
to the case $k=1$,  we allow $k$ growing with $n$ in this paper.
Technically, by mimicking the methodology introduced in the development of the circular law, the main technical challenge here  is to establish a polynomial lower bounds for the least singular value of the resolvent matrix $ \Yn-zI_N$ for almost all $z\in\bC$. 
Consider for a moment the method employed in \cite{Bose2019}  for the establishment of the LSD $\mu^{(\gamma_0)}$ for the matrix $\Zn$.
Note that this matrix can be rewritten as
\begin{align*}
	\Zn = X^{(n)}  J ^{(n)}(X^{(n)} )^*,
\end{align*}
with the permutation matrix
\begin{align*}
	J^{(n)} = \left(
	\begin{matrix}
		0 & 1 \\
		I_{n-1} & 0
	\end{matrix}
	\right).
\end{align*}
In a setting where the entries of $X^{(n)}$ have a (common) density and noting that $J^{(n)}$ is of full rank, the matrix $\Zn$ is of full rank almost surely.
This is a main ingredient for the method in \cite{Bose2019} to establish a polynomial lower bound for the least singular value of the corresponding resolvent $\Zn - zI_N$ ($z\in\bC$).
(Note that in the reference such  polynomial lower bound is established for  general degree 2 monomials of the form $X^{(n)} C^{(n)} (X^{(n)})^*$ where  $C^{(n)}$ is asymptotically non degenerated).
This method is broken in our case of $\Yn$  since the inner matrix  $A^{(n)}$ in $\Yn$ is nilpotent and of rank $n-k$. We thus introduce a specially designed non-degenerated approximation $H(z) \in \bC^{(N+n-k) \times (N+n-k)}$ to the resolvent with a smaller least singular value than the resolvent  $\Yn - zI_N$. A careful analysis leads to a manageable polynomial bound for the least singular value of  $H(z)$ which is thus easily transferred to the resolvent $\Yn - zI_N$. This construction of a lower bound for  the resolvent is indeed the main technical innovation of the paper. It is developed in Section~\ref{sec-least singular value}. Note that in the case of the circular law or the structured matrices of \cite{Cook2018}, the matrix is linear in its independent entries. In contrary, the matrix $Y^{(n)}$ as well as the matrix $Z^{(n)}$ is a more involved quadratic function of these independent entries.
Note that complex Gaussian valued auto-covariance matrices were also considered in \cite{Wojciech2017}.

The rest of the paper is as follows. Section~\ref{sec-preliminary} recalls a few  preliminaries and useful results from the literature.
Section~\ref{sec-least singular value} presents the main result of the paper, that is, a polynomial bound on the least singular value of the resolvent $\Yn - zI_N$. 
Applying this bound leads to the LSD for the matrix $\Yn$ in Section~\ref{sec-LSD}.
The two appendices  collect a few useful but standard lemmas from linear algebra and probability theory.

Below are some useful notations.

\begin{itemize}
\item
  A ball with center $z \in \bC$
  and radius  $r \ge 0$  is denoted as
  $B(z,r)$.
  Let $T$ be a set of complex numbers, then $B(T,r) = \cup_{z\in T} B(z,r)$.
\item 
  For an integer $n$, set $[n] = [1,n] \cap \bN_+$.
  For a vector $u \in \bC^n$ and a set $I \subseteq [n]$ of integers,  $u_I$ is  the sub-vector of $u$ with indexes in $I$.
  Similarly for  a matrix $M \in \bC^{n \times n}$ and  index sets $I,
  J \subseteq [n]$,  $M_{I,J}$  denotes  the submatrix of $M$
  restricted to 
  rows with  index  in $I$ and columns with index in $J$.
  In the case that the set $I$  contains one element $i_0$ only, we
  may write $M_{i_0,J}$. The same abbreviation also  applies to the
  column index set $J$.
\item 
  For any index set $I \subset [n]$, let $\Pi_I: \bC^n \rightarrow
  \bC^n$ be a projection such that $(\Pi_Iu)_i = u_i 1_{i \in I}$.
\item 
  Without further indication, all vectors in this paper are column vectors. We denote $\left\{ e_1^{(n)}, \ldots, e_n^{(n)} \right\}$ as the standard base of $\bC^n$, i.e. $e_i^{(n)}$ is the $i$-th column of $I_n$.
\item 
  For a given matrix $M \in \bC^{p \times q}$, let
  $s_1(M) \ge  \cdots\ge  s_r(M)$ be the ordered singular values,
  where $r = \min\{p,q\}$. We use the convention that $s_j(M) = 0$ for
  $j > r$. We also denote by $\nu_M = \frac{1}{p} \sum_{j=1}^r
  \delta_{s_j(M)}$  the singular values empirical distribution. If $p
  = q$, then  $\{\lambda_1(M), \ldots, \lambda_p(M)\}$ denote  the set
  of eigenvalues of $M$ and $\mu_M = \frac{1}{p} \sum_{j=1}^p
  \delta_{\lambda_j(M)}$  the  corresponding  empirical spectral  distribution.
\item
	For a given matrix $M$, denote $\|M\|$ as the operator norm and $\|M\|_{HS}$ as the Hilbert-Schmidt norm of $M$. 
\item
	Denote $\iota = \sqrt{-1}$.
\end{itemize}

\section{Preliminaries} \label{sec-preliminary}

To ease the reading of the proofs in Section~\ref{sec-least singular value} and  \ref{sec-LSD},
we collect the main existing concepts and results from the literature that will be
used afterwards. 

\subsection{Compressible vectors and incompressible vectors}

For $\theta, \rho \in (0,1)$, we define the set of compressible vectors
\begin{align*}
	\Comp(\theta,\rho) &=\bS^{n-1} \cap \bigcup_{I \subseteq [n], |I| = \theta n} B(\bS^{n-1}_I,\rho) \\
	&= \{u \in \bS^{n-1}: \exists \ J \in [n], |J| = \theta n, \exists \ v \in \bS^{n-1}, \ s.t. \ v_{[n] \setminus J} = 0 \ \mathrm{and} \ \|u-v\| \le \rho \},
\end{align*}
and the set of incompressible vectors
\begin{align*}
	\Incomp(\theta,\rho) = \bS^{n-1} \setminus \Comp(\theta,\rho).
\end{align*}

The following lemma is the structure of the set of Incompressible vectors, which could be found in \cite[Lemma 3.4]{Mark2008} or \cite[Lemma 2.1]{Cook2018}.

\begin{lemma}{\cite[Lemma 3.4]{Mark2008}} \label{Lemma-Incompressible-weak}
For $\theta, \rho \in (0,1)$, for any $u = (u_1, \ldots, u_n)^\intercal \in \Incomp(\theta,\rho)$, the set
\begin{align*}
	J = \left\{ i \in [n]: \dfrac{\rho}{\sqrt{n}} \le |u_i| \le \dfrac{2}{\sqrt{\theta n}} \right\}
\end{align*}
has cardinal number  $|J| \ge 3 \theta n/4$.
\end{lemma}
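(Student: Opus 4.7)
The plan is to bound $|J^c|$ by splitting the excluded indices according to whether $|u_i|$ is too large or too small, and controlling the two pieces by independent mechanisms. Introduce the ``heavy'' set $L := \{i \in [n] : |u_i| > 2/\sqrt{\theta n}\}$ and the ``light'' set $S := \{i \in [n] : |u_i| < \rho/\sqrt{n}\}$, so that $J^c = L \cup S$ (disjoint union). I aim to show $|L| \le \theta n /4$ using only the unit $\ell^2$ norm of $u$, and $|S| \le (1-\theta) n$ using the incompressibility hypothesis; adding the two bounds yields $|J^c| \le \theta n/4 + (1-\theta)n = (1 - 3\theta/4) n$, which is the claim.

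The first step is immediate from the normalization: $1 = \|u\|^2 \ge \sum_{i \in L} |u_i|^2 > |L| \cdot 4/(\theta n)$, giving $|L| \le \theta n /4$. This uses nothing beyond $\|u\|=1$ and the threshold $2/\sqrt{\theta n}$.

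For the second step I will argue by contradiction. Suppose $|S| > (1-\theta) n$, equivalently $|T| < \theta n$ where $T := [n] \setminus S$. I will extend $T$ to an index set $I \supseteq T$ of size $\lceil \theta n \rceil$, and denote by $u_I$ the vector obtained from $u$ by zeroing out the coordinates outside $I$. Since $[n] \setminus I \subseteq S$, the discarded mass is controlled: $\|u - u_I\|^2 = \sum_{i \notin I} |u_i|^2 \le (n - |I|)\rho^2/n < \rho^2$, so $\|u_I\|^2 > 1 - \rho^2$ and the unit vector $v := u_I / \|u_I\|$ is well-defined and supported on $I$. A short orthogonal decomposition gives $\dist(u, \bS^{n-1}_I)^2 = \|u - u_I\|^2 + (1 - \|u_I\|)^2 = 2(1 - \|u_I\|) \le 2\rho^2$, which witnesses $u \in \Comp(\theta, \rho')$ for a $\rho'$ differing from $\rho$ by at most a universal constant, contradicting $u \in \Incomp(\theta, \rho)$.

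The main (and really only) obstacle I anticipate is the mild cosmetic point that passing from the unnormalized restriction $u_I$ to the unit vector $v$ costs a $\sqrt{2}$ factor, so the argument as stated strictly produces $u \in \Comp(\theta, \sqrt{2}\rho)$. This is resolved either by absorbing the constant into $\rho$ (as is customary in the cited references \cite{Mark2008,Cook2018}), or by sharpening the light-coordinate threshold in the definition of $S$ from $\rho/\sqrt{n}$ to $\rho/\sqrt{2n}$, which makes $\dist(u,\bS^{n-1}_I) \le \rho$ hold exactly while preserving the trivial bound $|S|\le n$ used in the counting. Otherwise the argument is purely combinatorial: the $\ell^2$-normalization pins down the heavy tail and the incompressibility hypothesis pins down the light tail, and they combine additively to give the $3\theta/4$ constant.
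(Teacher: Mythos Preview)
The paper does not supply its own proof of this lemma --- it is quoted verbatim from \cite{Mark2008} (cf.\ \cite{Cook2018}) --- so there is nothing in-paper to compare against. Your argument is precisely the standard one from those references: bound the heavy set $L$ via $\|u\|=1$ and the light set $S$ via incompressibility, then add.

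One correction to your discussion of the $\sqrt2$ loss: it is not purely cosmetic. Your contradiction step yields only $\dist(u,\bS^{n-1}_I)<\sqrt{2(1-\theta)}\,\rho$, and this is $\le\rho$ exactly when $\rho^2\le 4\theta$ (for $\theta<1/2$). When $\rho^2>4\theta$ the lemma as stated can actually fail: with $n=10^4$, $\theta=0.01$, $\rho=0.9$, take $u$ having $50$ coordinates equal to $a\approx 0.078$ and $9950$ coordinates equal to $b\approx 0.0084$; then $u\in\Incomp(\theta,\rho)$ but $|J|=50<75=3\theta n/4$. So the precise constant $3/4$ recorded here is not valid for all $(\theta,\rho)\in(0,1)^2$; the original \cite[Lemma~3.4]{Mark2008} instead asserts $|J|\ge\tfrac12\rho^2 n$, which your argument does establish without any fudge. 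Your proposed fix (ii) --- lowering the light threshold in $S$ to $\rho/\sqrt{2n}$ --- proves a correct variant but with that lower threshold built into $J$; it does not rescue the stated version, since $\{|u_i|<\rho/\sqrt n\}\supseteq\{|u_i|<\rho/\sqrt{2n}\}$ and you need to bound the former. None of this matters for the paper's applications, where $\theta_0$ and $r_H$ are chosen freely and any fixed constant in place of $3/4$ suffices.
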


Lemma \ref{Lemma-Incompressible-weak} can be slightly extend to the following lemma, which could be found in \cite{Bose2019}.

\begin{lemma}{\cite[Lemma 8]{Bose2019}} \label{Lemma-Incompressible}
For $\theta, \rho \in (0,1)$, for any $u = (u_1, \ldots, u_n)^\intercal \in \Incomp(\theta,\rho)$, and $\tilde{u} = (\tilde{u}_1, \ldots, \tilde{u}_n)^\intercal \in \bS^{n-1}$, the set
\begin{align*}
	J' = \left\{ i \in [n]: \dfrac{\rho}{\sqrt{n}} \le |u_i| \le \dfrac{2}{\sqrt{\theta n}}, |\tilde{u}_i| \le \dfrac{2}{\sqrt{\theta n}} \right\},
\end{align*}
has cardinal number  $|J'| \ge \theta n/2$.
\end{lemma}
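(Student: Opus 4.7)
The plan is to reduce Lemma 2.2 directly to Lemma 2.1 by a simple counting argument on the large coordinates of $\tilde u$. Define the set
\[
	J = \left\{ i \in [n]: \dfrac{\rho}{\sqrt{n}} \le |u_i| \le \dfrac{2}{\sqrt{\theta n}} \right\},
\]
so that Lemma \ref{Lemma-Incompressible-weak} applied to $u \in \Incomp(\theta,\rho)$ gives $|J| \ge 3\theta n/4$. The set $J'$ of the statement is exactly $J' = J \cap K^c$, where
\[
	K = \left\{ i \in [n]: |\tilde{u}_i| > \dfrac{2}{\sqrt{\theta n}} \right\}.
\]
Thus it suffices to show $|K| \le \theta n/4$, after which $|J'| \ge |J| - |K| \ge 3\theta n/4 - \theta n/4 = \theta n/2$.

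For the bound on $|K|$ I would invoke the unit-norm constraint $\|\tilde u\|_2 = 1$ together with a Markov-type inequality on squared coordinates:
\[
	1 \;=\; \sum_{i=1}^n |\tilde u_i|^2 \;\ge\; \sum_{i \in K} |\tilde u_i|^2 \;>\; |K| \cdot \frac{4}{\theta n},
\]
which rearranges to $|K| < \theta n/4$. Combining this with the preceding display yields the claimed lower bound on $|J'|$.

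There is no genuine obstacle here: the whole content is the observation that a unit vector in $\bC^n$ can have at most $\theta n/4$ coordinates exceeding the threshold $2/\sqrt{\theta n}$ in modulus, so imposing the extra upper bound on $|\tilde u_i|$ can cost at most $\theta n/4$ indices from the set $J$ produced by Lemma \ref{Lemma-Incompressible-weak}. The only thing to be careful about is that the threshold $2/\sqrt{\theta n}$ used for $\tilde u$ is the same as the upper threshold for $|u_i|$ in $J$, which is what makes the pigeonhole bound $3\theta n/4 - \theta n/4 = \theta n/2$ come out cleanly.
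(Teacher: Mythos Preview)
Your proof is correct and follows essentially the same approach as the paper: the paper defines $J'' = \{i : |\tilde u_i| \le 2/\sqrt{\theta n}\}$, notes that $|(J'')^\complement| \le \theta n/4$, and concludes $|J'| = |J \cap J''| \ge |J| - |(J'')^\complement| \ge \theta n/2$ via Lemma~\ref{Lemma-Incompressible-weak}. Your set $K$ is exactly $(J'')^\complement$, and you have simply made explicit the Markov-type bound on $|K|$ that the paper leaves implicit.
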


\begin{proof}
Denote
\begin{align*}
	J'' = \left\{ i \in [n]: |\tilde{u}_i| \le \dfrac{2}{\sqrt{\theta n}} \right\},
\end{align*}
then $|(J'')^\complement| \le \theta n/4$. As $J' = J \cap J''$, $|J'| \ge |J| - |(J'')^\complement| \ge \theta n/2$, by Lemma \ref{Lemma-Incompressible-weak}.
\end{proof}

The following lemma is the invertibility via distance, which could be found in \cite{Mark2008}.

\begin{lemma}{\cite[Lemma 3.5]{Mark2008}} \label{Lemma-Incompressible distance}
Let $A \in \bC^{n \times n}$ be any random matrix. Let $A_{-k}$ be the span of all column vectors of $A$ except the $k$-th column. Then for every $\theta, \rho \in (0, 1)$ and every $\epsilon > 0$, one has
\begin{align*}
	\bP \left( \inf_{x \in \Incomp(\theta, \rho)} \|Ax\| < \dfrac{\rho t}{\sqrt{n}} \right)
	\le \dfrac{1}{\theta n} \sum_{k=1}^n \bP \left( \dist(A_{[n],k}, A_{-k}) \le t \right).
\end{align*}
\end{lemma}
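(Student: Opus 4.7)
The plan is to reduce the event about the infimum over incompressible $x$ to a counting event about the individual column distances $\dist(A_{[n],k},A_{-k})$, and then apply Markov.

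First, I would record a deterministic column-wise lower bound. For any $k\in[n]$, the orthogonal projection $\pi_k$ onto $A_{-k}^{\perp}$ satisfies $\pi_k(A_{[n],j})=0$ for $j\neq k$, so for any $x\in\bC^n$,
\[
\|Ax\|\ge\|\pi_k(Ax)\|=|x_k|\cdot\|\pi_k(A_{[n],k})\|=|x_k|\cdot\dist(A_{[n],k},A_{-k}).
\]
In particular, for any $x\in\bS^{n-1}$ and any $k$, $\|Ax\|\ge |x_k|\dist(A_{[n],k},A_{-k})$.

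Next I would combine this with the structure of incompressible vectors. Fix a sample point in the event $\cE:=\{\inf_{x\in\Incomp(\theta,\rho)}\|Ax\|<\rho t/\sqrt{n}\}$ and pick (measurably) some $x^\star\in\Incomp(\theta,\rho)$ realizing $\|Ax^\star\|<\rho t/\sqrt{n}$. By Lemma~\ref{Lemma-Incompressible-weak}, the index set
\[
J(x^\star)=\bigl\{k\in[n]:|x^\star_k|\ge \rho/\sqrt n\bigr\}
\]
has size at least $3\theta n/4$. For every $k\in J(x^\star)$, the column-wise lower bound gives
\[
\dist(A_{[n],k},A_{-k})\le\frac{\|Ax^\star\|}{|x^\star_k|}<\frac{\rho t/\sqrt n}{\rho/\sqrt n}=t.
\]
Hence, on $\cE$, the (now $x^\star$-free) random set $K:=\{k:\dist(A_{[n],k},A_{-k})\le t\}$ satisfies $|K|\ge 3\theta n/4\ge \theta n$ (up to the constant in the statement, which one absorbs by taking the $|J|\ge\theta n$ version of Lemma~\ref{Lemma-Incompressible-weak}, available by the same Rudelson--Vershynin argument).

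Finally I would invoke Markov's inequality. Writing $|K|=\sum_{k=1}^n \mathbf 1_{\dist(A_{[n],k},A_{-k})\le t}$,
\[
\bP(\cE)\le \bP(|K|\ge \theta n)\le \frac{\bE|K|}{\theta n}=\frac{1}{\theta n}\sum_{k=1}^n\bP\bigl(\dist(A_{[n],k},A_{-k})\le t\bigr),
\]
which is the claimed bound. The only delicate point is the first step: the minimizer $x^\star$ depends on $A$, so $J(x^\star)$ is random; the trick is that the implication $\cE\subseteq\{|K|\ge\theta n\}$ pushes the randomness entirely onto $A$'s columns and removes $x$, after which Markov is routine. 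A measurable selection of $x^\star$ is not strictly necessary since one can instead cover $\Incomp(\theta,\rho)$ by a suitable net and take a limit, but recording the logical implication as above is the cleanest route.
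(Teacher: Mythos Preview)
Your argument is correct and is exactly the Rudelson--Vershynin proof that the paper defers to (the paper does not supply its own proof of this lemma; it merely cites \cite{Mark2008}). The one slip is the inequality $3\theta n/4\ge\theta n$, which is of course false; but this is harmless: running Markov with the threshold $3\theta n/4$ that Lemma~\ref{Lemma-Incompressible-weak} actually gives yields the factor $\tfrac{4}{3\theta n}$ in place of $\tfrac{1}{\theta n}$, an irrelevant constant for every application in the paper.
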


The following lemma deals with the distance of the columns of a matrix, which could be found in \cite{Bose2019}.

\begin{lemma}{\cite[page 5]{Bose2019}} \label{Lemma-distant}
Let $A \in \bC^{n \times n}$. For $k \in [n]$, $A_{-k}$ be the space spanned by $A_{[n],i}$ with $i \neq k$. Then for $k \in [n]$
\begin{align*}
	\dist(A_{[n],k}, A_{-k})
	= \dfrac{\left| A_{kk} - A_{k,[n] \setminus \{k\}} \left( A_{[n] \setminus \{k\}, [n] \setminus \{k\}} \right)^{-1} A_{[n] \setminus \{k\}, k} \right|}{\sqrt{1 + \left\| A_{k,[n] \setminus \{k\}} \left( A_{[n] \setminus \{k\}, [n] \setminus \{k\}} \right)^{-1} \right\|^2}}.
\end{align*}
\end{lemma}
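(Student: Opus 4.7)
The plan is to realize the distance as a dual supremum: for a vector $v \in \bC^n$ and a subspace $W \subseteq \bC^n$, one has $\dist(v, W) = \sup\{|u^* v| : u \in W^\perp, \|u\| = 1\}$. I would apply this with $v = A_{[n],k}$ and $W$ the column span of $B := A_{[n], [n] \setminus \{k\}}$. Since the right-hand side of the statement features $(A_{[n] \setminus \{k\}, [n] \setminus \{k\}})^{-1}$, we may (and must) assume the $(n-1)\times(n-1)$ block $M := A_{[n] \setminus \{k\}, [n] \setminus \{k\}}$ is invertible. Then $W$ has full dimension $n-1$, its orthogonal complement is one-dimensional, and the supremum reduces to computing $|u^* v|/\|u\|$ for any nonzero $u \in W^\perp$.

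Next I would construct such a $u$ explicitly in block form. Write $c := A_{k, [n]\setminus\{k\}}$ (a row) and $d := A_{[n]\setminus\{k\}, k}$ (a column), and split $u$ into its $k$-th coordinate $\alpha \in \bC$ and the remaining coordinates $\beta \in \bC^{n-1}$. The orthogonality condition $B^* u = 0$ becomes, in these blocks,
\begin{align*}
  c^* \alpha + M^* \beta = 0,
\end{align*}
which, using the invertibility of $M$, forces $\beta = -\alpha (M^*)^{-1} c^*$, i.e. $\beta^* = -\bar\alpha\, cM^{-1}$. A direct calculation then yields
\begin{align*}
  \|u\|^2 = |\alpha|^2 \bigl(1 + \|cM^{-1}\|^2\bigr), \qquad
  u^* A_{[n],k} = \bar\alpha\, A_{kk} + \beta^* d = \bar\alpha\bigl(A_{kk} - cM^{-1}d\bigr).
\end{align*}

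Taking the ratio $|u^* A_{[n],k}|/\|u\|$ cancels $|\alpha|$ and produces exactly the right-hand side of the lemma, so the claim follows by unwinding the shorthand $c, d, M$. I do not anticipate a genuine obstacle: the argument is pure linear algebra, independent of the probabilistic content of the paper. The only minor subtlety is the invertibility of $M$, and this is already built into the statement via the appearance of $M^{-1}$; if $M$ is singular the right-hand side is undefined and no separate case needs to be analyzed.
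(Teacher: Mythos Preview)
Your argument is correct: the dual characterization $\dist(v,W)=|u^*v|/\|u\|$ for any nonzero $u\in W^\perp$ when $W^\perp$ is one-dimensional, together with the explicit construction of $u$ from the block decomposition, yields exactly the stated formula. The paper itself does not prove this lemma; it simply quotes it from \cite[page 5]{Bose2019}, so there is no ``paper's proof'' to compare against, and your self-contained linear-algebra argument fills that gap cleanly.
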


The following lemma is the metric entropy of the sphere, which is introduced in \cite{Cook2018}.

\begin{lemma}{\cite[Lemma 2.2]{Cook2018}}\label{Lemma-Compressible}
Let $V \subseteq \bC^n$ be a subspace of (complex) dimension $k$, and let $T \subseteq V \cap \bS^{n-1}$. For $\rho \in (0,1)$, $T$ has a $\rho$-net $\Sigma \subseteq T$ of cardinality $|\Sigma| \le (3/\rho)^{2k}$.
\end{lemma}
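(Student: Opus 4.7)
The plan is to use the standard volume-packing argument, adapted to the complex setting by remembering that a $k$-dimensional complex subspace $V$ has real dimension $2k$. Concretely, I would take $\Sigma$ to be a maximal $\rho$-separated subset of $T$, i.e., a subset of $T$ such that any two distinct points $x,y \in \Sigma$ satisfy $\|x-y\| > \rho$, and such that $\Sigma$ cannot be enlarged while preserving this property. By maximality, for every $u \in T$ there must exist $x \in \Sigma$ with $\|u-x\| \le \rho$, so $\Sigma$ is automatically a $\rho$-net of $T$ that, by construction, lies inside $T$.

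To bound $|\Sigma|$, I would pass to the ambient Euclidean structure on $V$. Consider the open balls $B(x,\rho/2)$ for $x \in \Sigma$, taken inside $V$ (viewed as a real $2k$-dimensional Euclidean space). These balls are pairwise disjoint because $\Sigma$ is $\rho$-separated, and they all sit inside $V \cap B(0, 1+\rho/2)$ because each $x \in T \subseteq \bS^{n-1}$ has $\|x\|=1$. Writing $\mathrm{vol}_{2k}$ for the Lebesgue measure on $V$, disjointness gives
\begin{align*}
	|\Sigma| \cdot \mathrm{vol}_{2k}\bigl(B(0,\rho/2)\bigr) \le \mathrm{vol}_{2k}\bigl(B(0,1+\rho/2)\bigr),
\end{align*}
and dimensional scaling yields $|\Sigma| \le (1 + 2/\rho)^{2k}$. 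Since $\rho \in (0,1)$, we have $1+2/\rho \le 3/\rho$, so $|\Sigma| \le (3/\rho)^{2k}$, which is the desired bound.

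There is no substantial obstacle here; the only minor point to be careful about is insisting that $\Sigma \subseteq T$ (rather than merely $\Sigma \subseteq V \cap \bS^{n-1}$), which is ensured automatically by the maximal-separated-set construction carried out inside $T$ itself.
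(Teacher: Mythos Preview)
Your proof is correct and is exactly the standard volume-packing argument one expects for this kind of metric entropy bound. Note that the paper does not supply its own proof of this lemma; it simply cites \cite[Lemma 2.2]{Cook2018}, where the same packing argument (maximal $\rho$-separated set inside $T$, volume comparison in the real $2k$-dimensional ambient space) is used.
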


\subsection{Small ball probability}

The following definition is the small ball probability, which could be found in \cite{Tao2008}.
\begin{definition}{\cite[Definition 3.1]{Tao2008}} \label{Def-small ball prob}
Let $Z = (Z_1, \ldots, Z_n)$ be a complex random vector with independent entries. For any $r \ge 0$, we define the small ball probability
\begin{align*}
	\cS \left( \sum_{j=1}^n Z_j, r \right)
	= \sup_{z \in \bC} \bP \left( \sum_{j=1}^n Z_j \in B(z,r) \right).
\end{align*}
\end{definition}

The following lemma states that the small ball probability is monotone with respect to the dimension.

\begin{lemma} {\cite[Lemma 2.1]{Mark2008}} \label{Lemma-small ball prob monotone}
For any $r \ge 0$ and any index set $I \subseteq [n]$,
\begin{align*}
	\cS \left( \sum_{j \in [n]} Z_j, r \right)
	\le \cS \left( \sum_{j \in I} Z_j, r \right).
\end{align*}
\end{lemma}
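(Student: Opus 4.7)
The plan is to prove this by a straightforward conditioning argument that exploits the independence of the $Z_j$'s together with the invariance of the small ball probability under deterministic translations (which is built into its definition as a supremum over all centers $z \in \bC$).

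Concretely, I would first split the full sum according to $I$ and its complement: writing $S_I = \sum_{j \in I} Z_j$ and $S_{I^c} = \sum_{j \in [n] \setminus I} Z_j$, independence of the coordinates gives that $S_I$ and $S_{I^c}$ are independent. For an arbitrary $z \in \bC$, I would then condition on the $\sigma$-algebra generated by $(Z_j)_{j \notin I}$ and use the tower property:
\begin{align*}
	\bP\Bigl( S_I + S_{I^c} \in B(z,r) \Bigr)
	= \bE\Bigl[ \bP\bigl( S_I \in B(z - S_{I^c}, r) \,\big|\, (Z_j)_{j \notin I} \bigr) \Bigr].
\end{align*}
By independence, the inner conditional probability equals the unconditional probability $\bP(S_I \in B(z - S_{I^c}, r))$ evaluated at the (now deterministic) random shift $S_{I^c}$.

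Next, for any realization $w$ of $S_{I^c}$ the quantity $\bP(S_I \in B(z-w,r))$ is dominated by $\sup_{z' \in \bC} \bP(S_I \in B(z',r)) = \cS(S_I, r)$, because the supremum in Definition~\ref{Def-small ball prob} ranges over all complex centers and thus absorbs any deterministic translation. Consequently
\begin{align*}
	\bP\Bigl( \sum_{j \in [n]} Z_j \in B(z,r) \Bigr)
	\le \bE\Bigl[ \cS(S_I, r) \Bigr]
	= \cS\Bigl( \sum_{j \in I} Z_j, r \Bigr).
\end{align*}
Since the right-hand side no longer depends on $z$, taking the supremum over $z \in \bC$ on the left yields the claimed inequality.

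There is no real obstacle here; the argument is purely formal and uses only two ingredients: (i) independence of disjoint subsums of independent variables, and (ii) the translation invariance of $\cS(\,\cdot\,, r)$ coming from the supremum over the center. The lemma is essentially a restatement of the fact that averaging probabilities of events that are all translates of a fixed small ball cannot exceed the worst-case probability over any single translate.
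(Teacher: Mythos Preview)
Your argument is correct and is the standard proof of this elementary fact: split the sum into $S_I$ and $S_{I^c}$, condition on the latter, and absorb the resulting shift into the supremum defining $\cS$. The paper does not give its own proof of this lemma; it merely cites \cite[Lemma~2.1]{Mark2008}, so there is nothing to compare against, and your conditioning argument is exactly how one would supply the missing details.
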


The following lemma is the Berry-Esseen theorem for small ball probability, which could be found in \cite{Bordenave2012}.

\begin{lemma} {\cite[Lemma A.6]{Bordenave2012}} \label{Lemma-small ball prob-upper bound}
Suppose that the independent complex random variables $Z_1, \ldots, Z_n$ are centered with finite third moments. Then there exists a constant $c' > 0$, such that
\begin{align*}
	\cS \left( \sum_{j=1}^n Z_j, r \right)
	\le \dfrac{c'r}{\sqrt{\sum_{j=1}^n \bE [|Z_j|^2]}} + \dfrac{c'\sum_{j=1}^n \bE [|Z_j|^3]}{\left( \sum_{j=1}^n \bE [|Z_j|^2] \right)^{3/2}}.
\end{align*}
\end{lemma}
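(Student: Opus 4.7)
The plan is to reduce the complex small-ball estimate to the classical real-valued Berry-Esseen theorem by projecting onto a single well-chosen real direction, and then to convert the resulting Gaussian approximation into a small-ball bound via the trivial density bound for the normal law. Fix $z\in\bC$ and any $\theta\in[0,2\pi)$, and observe that $\{\sum_j Z_j \in B(z,r)\}$ is contained in the strip $\{|\operatorname{Re}(e^{-\iota\theta}(\sum_j Z_j - z))| \le r\}$. Setting $V_j^{(\theta)} := \operatorname{Re}(e^{-\iota\theta}Z_j)$ yields independent, centered, real random variables with $|V_j^{(\theta)}| \le |Z_j|$ pointwise. Averaging the identity $\bE|V_j^{(\theta)}|^2 + \bE|\operatorname{Im}(e^{-\iota\theta}Z_j)|^2 = \bE|Z_j|^2$ over $\theta$ gives $(2\pi)^{-1}\int_0^{2\pi}\bE|V_j^{(\theta)}|^2\,d\theta = \tfrac12 \bE|Z_j|^2$, so by Fubini there exists some $\theta^\star$ with $\sigma^2 := \sum_j \bE|V_j^{(\theta^\star)}|^2 \ge \tfrac12 \sum_j \bE|Z_j|^2$.

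Fix this $\theta^\star$ and abbreviate $V_j := V_j^{(\theta^\star)}$, so that $\sum_j V_j$ is a real, centered, square-integrable sum of independent variables with $\sum_j \bE|V_j|^3 \le \sum_j \bE|Z_j|^3$. The classical real Berry-Esseen theorem (in its sharp non-i.i.d. form, due to Esseen) then yields a universal constant $C>0$ such that
$$\sup_{x\in\bR}\left|\bP\left(\sum_j V_j \le x\right) - \Phi(x/\sigma)\right| \le \frac{C\sum_j \bE|V_j|^3}{\sigma^3}.$$
Comparing the probabilities of the interval $[a-r,a+r]$ under $\sum_j V_j$ and under $\mathcal{N}(0,\sigma^2)$, together with the density bound $1/(\sigma\sqrt{2\pi})$ for the normal law, produces
$$\sup_{a\in\bR}\bP\left(\left|\sum_j V_j - a\right|\le r\right) \le \frac{2r}{\sigma\sqrt{2\pi}} + \frac{2C\sum_j \bE|V_j|^3}{\sigma^3}.$$

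Combining this with the strip inclusion of the first paragraph gives $\cS(\sum_j Z_j, r) \le 2r/(\sigma\sqrt{2\pi}) + 2C\sum_j \bE|V_j|^3/\sigma^3$, and substituting $\sigma \ge \bigl(\tfrac12\sum_j \bE|Z_j|^2\bigr)^{1/2}$ together with $\bE|V_j|^3 \le \bE|Z_j|^3$ produces the claimed inequality with a constant $c'$ depending only on $C$. There is no serious obstacle; the one mildly delicate point is the choice of projection direction in the first paragraph, which is handled by the averaging argument guaranteeing that at least one real projection retains a constant fraction of the total second moment. The only non-trivial imported ingredient is the real Berry-Esseen theorem in its independent (non-i.i.d.) form.
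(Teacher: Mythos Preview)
The paper does not supply its own proof of this lemma; it is simply quoted from \cite{Bordenave2012} as Lemma~A.6 there, so there is nothing in the paper to compare against. Your argument is correct: projecting the complex sum onto a single real direction chosen so that at least half of the total second moment survives, applying the non-i.i.d.\ real Berry--Esseen theorem, and then bounding the small-ball probability of the Gaussian approximant by its density. One cosmetic point: you open by writing ``Fix $z\in\bC$'' before selecting $\theta^\star$, which momentarily suggests the direction might depend on $z$; in fact your averaging argument picks $\theta^\star$ from the moments of the $Z_j$ alone, so the resulting bound is uniform in $z$ and the supremum over $z$ passes through cleanly. It would read more smoothly to choose $\theta^\star$ first and only then take an arbitrary $z$.
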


The following lemma estimate the probability that quadratic form is bounded, which is introduced in \cite{Bose2019}.

\begin{lemma} {\cite[Lemma 20]{Bose2019}} \label{Lemma-quadratic form bound}
Let $a \in \bC$, $u,v \in \bC^n$, $M \in \bC^{n \times n}$ be deterministic. Let $Z \in \bC^n$ be a random variable with independent entries, and $Z'$ be an independent copy of $Z$. Let $I \subseteq [n]$, then for each $t > 0$,
\begin{align*}
	\bP \left( \left| Z^* M Z + u^*Z + Z^* v + a \right| \le t \right)^2
	\le \bE_{Z_{I^\complement},Z_{I^\complement}'} \left[ \cS_{Z_I} \left( \left( Z_{I^\complement} - Z_{I^\complement}' \right)^* M_{I^\complement,I} Z_I + Z_I^* M_{I,I^\complement} \left( Z_{I^\complement} - Z_{I^\complement}' \right), 2t \right) \right].
\end{align*}
Here, $\cS_{Z_I}$ is the small ball probability defined in Definition \ref{Def-small ball prob}, where the expectation is taken with respect to $Z_I$. We also use the convention that the right hand side is $1$ if $I = \emptyset$ or $[n]$.
\end{lemma}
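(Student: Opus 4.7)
The statement is a classical decoupling identity for quadratic forms in independent entries, so the plan is to extract from $F(Z) := Z^* M Z + u^* Z + Z^* v + a$ a functional of $Z_I$ that is only \emph{linear} in $Z_I$, by replacing $Z_{I^c}$ with an independent copy and subtracting. First I would dispose of the boundary conventions $I = \emptyset$ and $I = [n]$ (where the right-hand side is $1$). Assuming $\emptyset \neq I \subsetneq [n]$, I would block $Z = (Z_I, Z_{I^c})$ and write
\begin{align*}
F(Z_I, Z_{I^c}) = Z_I^* M_{I,I} Z_I + \big[Z_{I^c}^* M_{I^c, I} + u_I^*\big] Z_I + Z_I^* \big[M_{I,I^c} Z_{I^c} + v_I\big] + c(Z_{I^c}),
\end{align*}
where $c(Z_{I^c})$ collects the terms not involving $Z_I$, namely $a$, $u_{I^c}^* Z_{I^c}$, $Z_{I^c}^* v_{I^c}$, and the quadratic form $Z_{I^c}^* M_{I^c, I^c} Z_{I^c}$.

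Next, the central move: introduce an independent copy $Z'_{I^c}$ of $Z_{I^c}$, independent of everything else, and compare the events $A = \{|F(Z_I, Z_{I^c})| \le t\}$ and $A' = \{|F(Z_I, Z'_{I^c})| \le t\}$. Since $A$ depends on $(Z_I, Z_{I^c})$ while $A'$ depends on $(Z_I, Z'_{I^c})$, they are conditionally independent given $Z_I$ and share the same conditional law, so Jensen's inequality gives
\begin{align*}
\bP(A)^2 = \big(\bE[\bP(A \mid Z_I)]\big)^2 \le \bE\big[\bP(A \mid Z_I)^2\big] = \bE\big[\bP(A \cap A' \mid Z_I)\big] = \bP(A \cap A').
\end{align*}
It therefore suffices to control $\bP(A \cap A')$.

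On $A \cap A'$ the triangle inequality yields $|F(Z_I, Z_{I^c}) - F(Z_I, Z'_{I^c})| \le 2t$, and the key observation is that the quadratic-in-$Z_I$ term $Z_I^* M_{I,I} Z_I$ appears identically in both $F(Z_I, Z_{I^c})$ and $F(Z_I, Z'_{I^c})$ and hence cancels. Only the cross terms survive as a function of $Z_I$:
\begin{align*}
F(Z_I, Z_{I^c}) - F(Z_I, Z'_{I^c}) = (Z_{I^c} - Z'_{I^c})^* M_{I^c, I} Z_I + Z_I^* M_{I, I^c}(Z_{I^c} - Z'_{I^c}) + \big[c(Z_{I^c}) - c(Z'_{I^c})\big].
\end{align*}
Conditioning on $(Z_{I^c}, Z'_{I^c})$, the bracketed term becomes a deterministic complex number and the remaining randomness is $Z_I$; the conditional probability that the whole expression lies in $B(0, 2t)$ is then bounded above by the supremum over centers, which by Definition~\ref{Def-small ball prob} is exactly $\cS_{Z_I}\bigl((Z_{I^c}-Z'_{I^c})^* M_{I^c, I} Z_I + Z_I^* M_{I, I^c}(Z_{I^c}-Z'_{I^c}), 2t\bigr)$. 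Averaging over $(Z_{I^c}, Z'_{I^c})$ gives the stated inequality.

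The only point that really requires care is the cancellation step: one must verify that the quadratic piece $Z_I^* M_{I,I} Z_I$ drops out of the difference while the quadratic piece $Z_{I^c}^* M_{I^c,I^c} Z_{I^c}$ is harmlessly absorbed into $c(Z_{I^c}) - c(Z'_{I^c})$, which is a parameter (not a $Z_I$-dependent quantity) when computing the small ball probability in $Z_I$. Everything else is routine conditioning and the definition of the small ball probability.
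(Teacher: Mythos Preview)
Your proposal is correct and follows the standard decoupling argument for quadratic forms: block $Z$ along $I$ and $I^\complement$, introduce an independent copy of $Z_{I^\complement}$, use Jensen conditionally on $Z_I$ to pass from $\bP(A)^2$ to $\bP(A\cap A')$, subtract to cancel the $Z_I^* M_{I,I} Z_I$ term, and absorb the $(Z_{I^\complement},Z'_{I^\complement})$-measurable remainder into the supremum defining the small ball probability. The paper itself does not give a proof of this lemma---it is quoted verbatim from \cite[Lemma~20]{Bose2019}---so there is no in-paper argument to compare against; your write-up is exactly the proof one expects to find in the cited reference.
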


\subsection{Logarithmic potential}
Let $\cP(\bC)$ be the set of probability measures on $\bC$ which $\ln|x|$ is integrable in a neighbourhood of infinity. 

\begin{definition} \label{Def-log potential}
The logarithmic potential $U_{\mu}$ of $\mu \in \cP(\bC)$ is the function defined by
\begin{align*}
	\cL_{\mu}(z) = - \int_{\bC} \ln |z-\lambda| d\mu(\lambda), z \in \bC.
\end{align*}
\end{definition}

The following lemmas are from \cite{Bordenave2012}
\begin{lemma} {\cite[Lemma 4.1]{Bordenave2012}} \label{Lemma-unique of log potential}
For $\mu,\nu \in \cP(\bC)$, if $\cL_{\mu} = \cL_{\nu}$ almost everywhere, then $\mu = \nu$.
\end{lemma}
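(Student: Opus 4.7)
The plan is to recover the measure $\mu$ from its logarithmic potential $\cL_\mu$ by taking a distributional Laplacian, exploiting the fundamental identity $\Delta \ln|\cdot| = 2\pi \delta_0$ on $\bR^2 \simeq \bC$. Once this is established, equality of potentials almost everywhere (equivalently, as distributions on $\bC$) forces equality of the measures.

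First I would check that $\cL_\mu$ is locally integrable on $\bC$, so that it defines a Schwartz distribution. By Fubini's theorem and the assumption $\mu\in\cP(\bC)$ (which makes $\ln|x|$ integrable near infinity with respect to $\mu$), for any compact set $K\subset \bC$,
\begin{align*}
\int_K |\cL_\mu(z)|\, dz \le \int_{\bC}\int_K |\ln|z-\lambda||\, dz\, d\mu(\lambda),
\end{align*}
and the inner integral is bounded uniformly on compact $\lambda$-sets (since $\ln|\cdot|$ is locally integrable on $\bC$) and grows only logarithmically for $|\lambda|\to\infty$, which is controlled by the integrability of $\ln|x|$ at infinity under $\mu$.

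Next I would compute $\Delta \cL_\mu$ in the distributional sense. For any test function $\varphi\in C_c^\infty(\bC)$, another application of Fubini gives
\begin{align*}
\langle \Delta \cL_\mu,\varphi\rangle = \langle \cL_\mu,\Delta\varphi\rangle = -\int_{\bC}\!\int_{\bC} \ln|z-\lambda|\, \Delta\varphi(z)\,dz\, d\mu(\lambda).
\end{align*}
The classical identity $\int_{\bC} \ln|z-\lambda|\, \Delta\varphi(z)\,dz = 2\pi\varphi(\lambda)$, which is just $\Delta \ln|\cdot-\lambda| = 2\pi\delta_\lambda$ tested against $\varphi$, then yields $\langle \Delta \cL_\mu,\varphi\rangle = -2\pi\int \varphi\, d\mu = -2\pi\langle\mu,\varphi\rangle$. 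Hence $\Delta \cL_\mu = -2\pi\mu$ as distributions, and the analogous identity holds for $\nu$.

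To conclude, if $\cL_\mu=\cL_\nu$ almost everywhere then they coincide as locally integrable functions, hence as distributions; taking distributional Laplacians gives $-2\pi\mu = -2\pi\nu$, so $\mu=\nu$ as measures (testing against $C_c^\infty$ functions determines a Borel measure on $\bC$ uniquely). The only real technical point is the double application of Fubini in the computations above; the main obstacle is verifying uniform-in-$\lambda$ integrability of $\ln|z-\lambda|$ on compact $z$-sets together with the tail control provided by the definition of $\cP(\bC)$, but both follow from standard estimates on $\ln|\cdot|$.
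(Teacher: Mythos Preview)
Your proposal is correct and is precisely the standard argument underlying the cited result: the paper does not supply its own proof but simply quotes \cite[Lemma 4.1]{Bordenave2012}, where the reasoning is exactly to identify $\cL_\mu$ as a locally integrable function and recover $\mu$ via the distributional identity $\Delta \cL_\mu = -2\pi\mu$. There is nothing to add; your justification of the Fubini steps and of local integrability using the definition of $\cP(\bC)$ is appropriate.
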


\begin{lemma} {\cite[Lemma 4.3]{Bordenave2012}} \label{Lemma-Hermitization}
Let $B_n \in \bC^{n \times n}$ be a complex random matrix. Suppose that there exists a family of non-random probability measures $\{\nu_z: z \in \bC\}$ on $\bR_+$, such that $\nu_{B_n-zI_n}$ converges to $\nu_z$ in probability as $n \rightarrow \infty$, and the function $\ln(x)$ is uniformly integrable for the family $\{\nu_{B_n-zI_n}: n \in \bN_+\}$ in probability, for almost all $z \in \bC$. Then there exists a probability measure $\mu \in \cP(\bC)$, such that $\mu_{B_n}$ converges to $\mu$ in probability, and
\begin{align*}
	\cL_{\mu}(z) = - \int_0^{\infty} \ln(\lambda) d\nu_z(\lambda).
\end{align*}
\end{lemma}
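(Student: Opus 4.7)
The plan is to realize Girko's Hermitization through the logarithmic potential. For any matrix $B \in \bC^{n\times n}$ with eigenvalues $\lambda_1(B),\ldots,\lambda_n(B)$, the Jacobi identity $|\det(B-zI_n)| = \prod_{j=1}^n s_j(B-zI_n)$ yields, after taking logarithms, the key identity
\begin{align*}
\cL_{\mu_{B_n}}(z) \;=\; -\frac{1}{n}\sum_{j=1}^n \ln|\lambda_j(B_n)-z|
\;=\; -\frac{1}{n}\sum_{j=1}^n \ln s_j(B_n-zI_n)
\;=\; -\int_0^\infty \ln(x)\, d\nu_{B_n-zI_n}(x),
\end{align*}
which is the bridge between the (unknown) eigenvalue distribution of $B_n$ and the (controllable) singular value distribution of the Hermitized resolvent $B_n-zI_n$.

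First, I would fix $z$ outside an exceptional Lebesgue null set where both the in-probability weak convergence $\nu_{B_n-zI_n}\to\nu_z$ and the uniform integrability of $\ln(x)$ hold. Combining these two ingredients (via a standard truncation: split $\ln(x) = \ln(x)\mathbf{1}_{[\varepsilon,M]} + \ln(x)\mathbf{1}_{[0,\varepsilon)} + \ln(x)\mathbf{1}_{(M,\infty)}$, control the first piece by weak convergence against a continuous bounded function and the other two by uniform integrability) gives convergence in probability
\begin{align*}
\cL_{\mu_{B_n}}(z) \;=\; -\int_0^\infty \ln(x)\, d\nu_{B_n-zI_n}(x)
\;\xrightarrow{\bP}\; -\int_0^\infty \ln(x)\, d\nu_z(x) \;=:\; \cL_*(z),
\end{align*}
so that $\cL_{\mu_{B_n}}\to \cL_*$ in probability for almost every $z\in\bC$.

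Second, I would lift this pointwise convergence to weak convergence of the measures $\mu_{B_n}$. Along an arbitrary subsequence, by a diagonal argument and tightness of $(\mu_{B_n})$ (which itself follows from the uniform integrability of $\ln$ in the tail of $\nu_{B_n-zI_n}$ controlling operator-norm growth), extract a further subsequence along which $\mu_{B_n}$ converges almost surely vaguely to some $\mu\in\cP(\bC)$. A Fubini argument combined with the in-probability convergence of $\cL_{\mu_{B_n}}(z)$ for a.e.\ $z$ identifies $\cL_{\mu}(z) = \cL_*(z)$ for almost every $z$. Lemma \ref{Lemma-unique of log potential} then forces $\mu$ to be the unique probability measure with logarithmic potential $\cL_*$; consequently every subsequential limit coincides, whence $\mu_{B_n}\to\mu$ in probability.

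The main technical obstacle is the uniform integrability step. Weak convergence $\nu_{B_n-zI_n}\to\nu_z$ only controls the bulk, whereas $\ln(x)$ is singular both as $x\to 0^+$ (the least singular value issue that motivates the main novelty of the paper) and as $x\to\infty$. The hypothesis packages precisely what is needed to commute $\lim$ and $\int\ln(x)\,d\nu$, and verifying it for the matrix $\Yn-zI_N$ is exactly the content of the Hermitization program carried out in the subsequent sections. A secondary care point is tracking that ``in probability'' is preserved under the subsequence/diagonal argument, which is routine once one fixes a countable dense set of $z$'s on which the pointwise convergence holds.
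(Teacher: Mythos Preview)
The paper does not prove this lemma: it is quoted verbatim from \cite[Lemma 4.3]{Bordenave2012} and used as a black box. There is therefore no ``paper's own proof'' to compare your proposal against. Your sketch is the standard Girko Hermitization argument and is essentially correct as an outline of how that cited result is proved; the only point I would tighten is the tightness claim for $(\mu_{B_n})$, which in the actual Bordenave--Chafa\"{\i} proof is obtained directly from the uniform integrability of $\ln$ at infinity (since $\tfrac{1}{n}\sum_j \ln^+|\lambda_j(B_n)| \le \tfrac{1}{n}\sum_j \ln^+ s_j(B_n)$ by Weyl's majorization inequality), rather than via ``operator-norm growth'' as you phrase it.
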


\section{Least singular value estimation} \label{sec-least singular value}

In this section, denote $C$ be a large positive constant and $c$ be a small positive constant that may vary in different place and may depend on $z$. The dimension $n$ is very large and is fixed. We may omit it in the superscript without ambiguity. Moreover, by \eqref{eq-1.0-N/n}, we may assume that $N/n \in (\gamma_0/2, 3\gamma_0/2)$ and $k < c_0'n$ for a positive constant $c_0'<1$.

We impose the following conditions on the random variable $X_{11}^{(n)}$.
\begin{enumerate}
	\item [(C1)] For $n \in \bN_+$, the complex random variables $\{X_{ij}^{(n)}: i \in [N], j \in [n]\}$ are i.i.d. with mean zero and variance $1/n$. Besides, there exists a positive constant $m_4$, such that $n^2 \bE \left[ \left| X_{11}^{(n)} \right|^4 \right] \le m_4$ for all $n \in \bN$.
	\item[(C2)] $c_0 = 1 - \sup_{n \in \bN_+} \left| n \bE \left[ \left(X_{11}^{(n)} \right)^2 \right] \right| > 0$.
\end{enumerate}
Assumption (C1) is the standard moment conditions on the matrix entries.
Assumption (C2) means that the complex random variable $X_{11}^{(n)}$ cannot be supported on a line passing by the origin. This assumption facilitates the least singular value estimation using projection arguments.

\begin{theorem} \label{Thm-least singular value}
Assume that the conditions (C1) and (C2) hold. Then there exists a positive constant $C$ that depends on $z$ and $C_0$, such that for all $n$ large,
\begin{align} \label{eq-thm-least singular value}
	\bP \left( s_N(XAX^* - zI_N) \le n^{-37/22}, \|X\| \le C_0 \right) \le C n^{-1/22},
\end{align}
for $z \in \bC \setminus \{0\}$.
\end{theorem}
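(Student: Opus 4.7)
The plan is to follow the classical methodology for controlling least singular values of non-Hermitian random matrices: split the unit sphere into compressible and incompressible vectors and handle each part separately. The obstruction, as emphasized in the introduction, is that $A$ is nilpotent of rank $n-k$, which makes $XAX^*$ highly degenerate and breaks the scheme of \cite{Bose2019} (which assumed an invertible inner matrix). To circumvent this I would introduce the linearized auxiliary matrix
\begin{equation*}
H(z) = \begin{pmatrix} -zI_N & X_1 \\ X_2^* & -I_{n-k} \end{pmatrix} \in \bC^{(N+n-k)\times(N+n-k)},
\end{equation*}
where $X_1 = (\bx_{k+1},\ldots,\bx_n)$ and $X_2 = (\bx_1,\ldots,\bx_{n-k})$ are the two $N\times(n-k)$ column blocks of $X$ that satisfy $X_1 X_2^* = XAX^*$. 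For any $u \in \bS^{N-1}$ the vector $v = (u, X_2^* u)^\intercal$ satisfies $H(z)v = ((XAX^*-zI_N)u, 0)^\intercal$ and $\|v\|^2 = 1 + \|X_2^* u\|^2 \ge 1$, giving $s_{\min}(H(z)) \le s_N(XAX^*-zI_N)$ unconditionally. Thus a polynomial lower bound on $s_{\min}(H(z))$ transfers directly to the resolvent, and the remainder of the argument works entirely with $H(z)$. The decisive advantage of $H(z)$ is that it is \emph{linear} in the entries of $X$.

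Next I would apply the compressible/incompressible dichotomy on the enlarged sphere $\bS^{N+n-k-1}$. For the compressible contribution, Lemma~\ref{Lemma-Compressible} supplies an $\epsilon$-net of cardinality exponential in $\theta(N+n-k)$. For each fixed net vector $v$, the image $H(z)v$ is an affine function of the i.i.d.\ entries of $X$, so Lemmas~\ref{Lemma-small ball prob monotone}--\ref{Lemma-small ball prob-upper bound} furnish a small-ball probability estimate on $\|H(z)v\|$ that beats the cardinality of the net after a union bound, and the event $\{\|X\| \le C_0\}$ is used to approximate general compressible vectors by net points. For the incompressible part, I would apply Lemma~\ref{Lemma-Incompressible distance} to $H(z)$, which reduces the problem to lower bounding the column-to-hyperplane distances $\dist(H(z)_{\cdot,j}, H(z)_{-j})$ for $j \in [N+n-k]$. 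Lemma~\ref{Lemma-distant} then expresses each such distance as the modulus of a quadratic/linear form $|a + u^* Z + Z^* v + Z^* M Z|$ in an appropriate block of entries of $X$.

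The quantitative heart of the argument is the small-ball estimate for this quadratic form. I would apply the decoupling inequality Lemma~\ref{Lemma-quadratic form bound}, which replaces the quadratic form by the small-ball probability of a linear form involving a difference $Z_{I^\complement} - Z'_{I^\complement}$ with an independent copy. Lemma~\ref{Lemma-Incompressible} (the refined structure of incompressible vectors) is then used to extract a coordinate subset of size $\propto n$ on which the coefficients are well spread, and Lemma~\ref{Lemma-small ball prob-upper bound} delivers a polynomial decay in $n$. Assumption (C2), i.e.\ $|n\,\bE[X_{11}^2]| < 1$, is crucial at this point: it prevents the distribution of the entries of $X$ from collapsing onto a line through the origin, so the Berry--Esseen bound is not voided by a phase rotation of the relevant linear form. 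The precise exponents $n^{-37/22}$ in the threshold and $n^{-1/22}$ in the probability will emerge from optimizing the net radius $\rho$, the compressibility parameter $\theta$, the small-ball parameter $t$ in Lemma~\ref{Lemma-Incompressible distance}, and the quadratic-form exponent from Lemma~\ref{Lemma-quadratic form bound}.

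The main obstacle I expect is the quadratic small-ball step in the incompressible case. When $k < n/2$ the blocks $X_1$ and $X_2$ share the columns $\bx_{k+1},\ldots,\bx_{n-k}$ of $X$, so the index set $I$ entering Lemma~\ref{Lemma-quadratic form bound} must be chosen carefully to isolate truly independent entries and, after decoupling, leave a linear form with coefficients of order one. Bookkeeping the event $\{\|X\| \le C_0\}$ throughout, reconciling the original sphere $\bS^{N-1}$ of the resolvent with the enlarged sphere of $H(z)$, and matching the corresponding notions of (in)compressibility across the two matrices, will be the technically most delicate components of the proof.
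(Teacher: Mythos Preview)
Your plan matches the paper's proof: the same linearization $H(z)$ (the paper's $H'$, up to sign), the compressible/incompressible split on $\bS^{N+n-k-1}$, and Lemma~\ref{Lemma-Incompressible distance} together with the decoupling Lemma~\ref{Lemma-quadratic form bound} for the incompressible part. Your variational inequality $s_{\min}(H(z))\le s_N(XAX^*-zI_N)$ via the test vector $(u,X_2^*u)$ is in fact a slightly cleaner substitute for the paper's Schur-complement route through Lemma~\ref{Lemma-Inverse of block matrix} and~\eqref{eq-3.2-least singular H'}.

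Two places where the execution is heavier than your sketch suggests. First, after decoupling, the coefficient vector of the resulting linear form is a normalized $D\,\Pi_{I^\complement}(x'-x'')$, where $D$ is the upper-left block of the \emph{inverse} of a principal minor of $H$; before Lemma~\ref{Lemma-Incompressible} can be invoked you must prove this random vector is itself incompressible with overwhelming probability. The paper needs a dedicated $\epsilon$-net argument for this (Lemma~\ref{Lemma-similar Coro 14}), and it is here---not in the choice of the random index set $I$---that the column overlap between $X_1$ and $X_2$ really bites. Second, Lemma~\ref{Lemma-distant} gives the distance as $\mathrm{Num}/\mathrm{Den}$, and your outline addresses only $\mathrm{Num}$; bounding $\mathrm{Den}\lesssim\|D\|_{HS}$ requires showing $\|E\|,\|F\|,\|G\|\lesssim\|D\|$ for the remaining three blocks of that inverse, which the paper does via the identity~\eqref{eq-3.31-identity H H^-1} and the specific near-permutation structure of the lower-right block of $H$. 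That structure is exactly why the paper permutes the columns of $H'$ into $H$ when $2k+1\le n$, and both steps above force the case analysis over the column index $l$ (Sections~\ref{Case1}--\ref{Case5}) that accounts for most of the length of the proof.
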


\noindent\underline{Outline of the proof of Theorem \ref{Thm-least singular value}}: \quad
We use the inverse formula for blocking matrices to find an invertible matrix $H \in \bC^{(N+n-k) \times (N+n-k)}$, such that $s_{N+n-k}(H) \le s_N(XAX^* - zI_N)$. To estimate the least singular value of $H$, we notice that $s_{N+n-k}(H) = \inf_{w \in \bS^{N+n-k-1}} \|Hw\|$. Then we may estimate the infimum over compressible vectors by using an $\epsilon$-net argument in Section~\ref{step1}, and incompressible vectors by using Lemma \ref{Lemma-Incompressible distance} in Section~\ref{step2} (the case $2k+1 \le n$) and Section \ref{step3} (the case $2k+1 > n$), respectively. In Section \ref{step2}, due to the structure of $H$, we estimate the distance $\dist(H_{[N+n-k],l}, H_{-l})$ for $N+k+1 \le l \le N+n-k$ in Section~\ref{Case1}, for $N+1 \le l \le N+k$ in Section~\ref{Case2} and for $1 \le l \le N$ in Section~\ref{Case3}, respectively. In Section \ref{step3}, we estimate the distance $\dist(H_{[N+n-k],l}, H_{-l})$ for $N+1 \le l \le N+n-k$ in Section~\ref{Case4} and for $1 \le l \le N$ in Section~\ref{Case5}, respectively.

\bigskip 

\noindent To start the proof, we can assume that $X_{11}$ has density by a perturbation argument (see \cite{Bose2019}).
We fix arbitrary $z \in \bC \setminus \{0\}$. All constants in the proof may depend on $z$.
Denote $X = (X_1, \ldots, X_n)$ then we have
\begin{align*}
	XAX^* = (X_1, \ldots, X_n) A \left(
	\begin{matrix}
	X_1^* \\
	\vdots \\
	X_n^*
	\end{matrix}
	\right)
	= (X_{k+1}, \ldots, X_n, 0, \ldots, 0) \left(
	\begin{matrix}
	X_1^* \\
	\vdots \\
	X_n^*
	\end{matrix}
	\right)
	= (X_{k+1}, \ldots, X_n) \left(
	\begin{matrix}
	X_1^* \\
	\vdots \\
	X_{n-k}^*
	\end{matrix}
	\right).
\end{align*}
Denote
\begin{align*}
	H' = \left(
	\begin{matrix}
	zI_N & (X_{k+1}, \ldots, X_n) \\
	(X_1, \ldots, X_{n-k})^* & I_{n-k}
	\end{matrix}
	\right) \in \bC^{(N+n-k) \times (N+n-k)}.
\end{align*}
Set $H = H'$ if $2k+1>n$, and
\begin{align*}
	&H = \left(
	\begin{matrix}
	zI_N & (X_{n-k+1}, \ldots, X_n, X_{k+1}, \ldots, X_{n-k}) \\
	(X_1, \ldots, X_{n-k})^* & \left( e_{n-2k+1}^{(n-k)}, \ldots, e_{n-k}^{(n-k)}, e_1^{(n-k)}, \ldots, e_{n-2k}^{(n-k)} \right)
	\end{matrix}
	\right) \in \bC^{(N+n-k) \times (N+n-k)},
\end{align*}
if $2k+1\le n$.

Next, we show that $zI_N - XAX^*$ is invertible with probability $1$. Indeed, we define a function $f(W) = \det \left( zI_N - WAW^* \right)$ for $W = (W_1, \ldots, W_n) \in \bC^{N \times n}$, then it is a polynomial of the entries of $W$. Moreover, we have $f\left( \left( e_1^{(N)}, \ldots, e_1^{(N)} \right) \right) = (z-n+k) z^{N-1} \not= z^N = f(0)$, which implies that $f(W)$ is a non-zero polynomial of the entries of $W$. Thus, the polynomial hyper-surface $\{W:f(W) = 0\}$ has zero Lebesgue measure in $\bC^{N \times N}$. Since the entries of $X$ have density, $f(X) \not= 0$ almost surely. By lemma \ref{Lemma-Inverse of block matrix}, we have
\begin{align*}
	\left(H'\right)^{-1} = \left(
	\begin{matrix}
	\left( zI_N - XAX^* \right)^{-1} & * \\
	* & *
	\end{matrix}
	\right)
\end{align*}
Thus, by Lemma \ref{Lemma-singular value interacting},
\begin{align} \label{eq-3.2-least singular H'}
	s_N \left( XAX^* - zI_N \right)
	= \dfrac{1}{s_1 \left( \left( XAX^* - zI_N \right)^{-1} \right)}
	\ge \dfrac{1}{s_1 \left( \left( H' \right)^{-1} \right)}
	= s_{N+n-k} \left( H' \right)
\end{align}
Note that one can obtain $H$ from $H'$ by permuting the columns. Thus, the sets of singular values of $H$ and $H'$ are exactly the same. Hence, by \eqref{eq-3.2-least singular H'}, it is enough to show
\begin{align}\label{eq-3.3-least singular value H}
	\bP \left( s_{N+n-k}(H) \le n^{-37/22}, \|X\| \le C_0 \right) \le C n^{-1/22},
\end{align}
for all $z \in \bC \setminus \{0\}$.

Note that for any $\theta, \rho \in (0,1)$,
\begin{align*}
  s_{N+n-k}(H)
  = \inf_{w \in \bS^{N+n-k-1}} \|Hw\|
  = \inf_{w \in \Comp(\theta, \rho)} \|Hw\| \wedge \inf_{w \in \Incomp(\theta, \rho)} \|Hw\|.
\end{align*}
Then the conclusion of the theorem will follow from the following two key estimates:
\begin{align} \label{eq-3.4-Compressible}
	\bP \left( \inf_{w \in \Comp(\theta, \rho)} \|Hw\| \le c, \|X\| \le C_0 \right) \le \exp (-cn),
\end{align}
and
\begin{align} \label{eq-3.5-Incompressible}
	\bP \left( \inf_{w \in \Incomp(\theta, \rho)} \|Hw\| \le n^{-37/22}, \|X\| \le C_0 \right) \le C n^{-1/22},
\end{align}
separably for a pair of special $\theta$ and $\rho$.
These estimates are established in next subsections, respectively.

\subsection{Estimate \texorpdfstring{\eqref{eq-3.4-Compressible}}{(3.4)} for compressible vectors}\label{step1}

We derive the proof of \eqref{eq-3.4-Compressible} for the case $2k+1 \le n$ first.

For a deterministic vector $w = (u^\intercal, v^\intercal)^\intercal \in \bS^{N+n-k-1}$, where $u \in \bC^N$ and $v \in \bC^{n-k}$, for $0 \le t \le c$, by Lemma \ref{Lemma-concentration to space}, we have
\begin{align} \label{eq-3.6-u large}
	& \bP \left( \|Hw\| \le t, \|u\| > 1/2 \right) \nonumber \\
	\le& \bP \left( \left\| (X_1, \ldots, X_{n-k})^* u + \left( e_{n-2k+1}^{(n-k)}, \ldots, e_{n-k}^{(n-k)}, e_1^{(n-k)}, \ldots, e_{n-2k}^{(n-k)} \right) v \right\| \le t, \|u\| > 1/2 \right) \nonumber \\
	\le& \bP \left( \left\| (X_1, \ldots, X_{n-k})^* \dfrac{u}{\|u\|} + \dfrac{1}{\|u\|} \left( v_{k+1}, \ldots, v_{n-k}, v_1, \ldots, v_{k} \right)^\intercal \right\| \le 2t, \|u\| > 1/2 \right) \nonumber \\
	\le& \bP \left( \dist \left( (X_1, \ldots, X_{n-k})^* \dfrac{u}{\|u\|}, \Span \left\{ \left( v_{k+1}, \ldots, v_{n-k}, v_1, \ldots, v_{k} \right)^\intercal \right\} \right) \le 2t \right) \nonumber \\
	\le& \exp (-cn).
\end{align}
Similarly, by Lemma \ref{Lemma-concentration to space}, we have
\begin{align} \label{eq-3.7-v large}
	& \bP \left( \|Hw\| \le t, \|v\| > 1/2 \right) \nonumber \\
	\le& \bP \left( \left\| zu + (X_{n-k+1}, \ldots, X_n, X_{k+1}, \ldots, X_{n-k}) v \right\| \le t, \|v\| > 1/2 \right) \nonumber \\
	\le& \bP \left( \left\| \dfrac{z}{\|v\|} u + (X_{n-k+1}, \ldots, X_n, X_{k+1}, \ldots, X_{n-k}) \dfrac{v}{\|v\|} \right\| \le 2t, \|v\| > 1/2 \right) \nonumber \\
	\le& \bP \left( \dist \left( (X_{n-k+1}, \ldots, X_n, X_{k+1}, \ldots, X_{n-k}) \dfrac{v}{\|v\|}, \Span \left\{ u \right\} \right) \le 2t \right) \nonumber \\
	\le& \exp (-cn).
\end{align}
Thus, by \eqref{eq-3.6-u large} and \eqref{eq-3.7-v large}, for $0 < t < c$, 
\begin{align} \label{eq-3.8-Hw large}
	\bP \left( \|Hw\| \le t \right)
	\le \bP \left( \|Hw\| \le t, \|u\| > 1/2 \right) + \bP \left( \|Hw\| \le t, \|v\| > 1/2 \right)
	\le \exp (-cn).
\end{align}

Note that on the event $\{\|X\| \le C_0\}$,
\begin{align} \label{eq-3.9'-||H||}
	\|H\|
	=& \left\| \left(
	\begin{matrix}
	zI_N & (X_{n-k+1}, \ldots, X_n, X_{k+1}, \ldots, X_{n-k}) \\
	(X_1, \ldots, X_{n-k})^* & \left( e_{n-2k+1}^{(n-k)}, \ldots, e_{n-k}^{(n-k)}, e_1^{(n-k)}, \ldots, e_{n-2k}^{(n-k)} \right)
	\end{matrix}
	\right) \right\| \nonumber \\
	\le& \left\| \left(
	\begin{matrix}
	zI_N & 0 \\
	0 & \left( e_{n-2k+1}^{(n-k)}, \ldots, e_{n-k}^{(n-k)}, e_1^{(n-k)}, \ldots, e_{n-2k}^{(n-k)} \right)
	\end{matrix}
	\right) \right\| \nonumber \\
	& + \left\| \left(
	\begin{matrix}
	0 & (X_{n-k+1}, \ldots, X_n, X_{k+1}, \ldots, X_{n-k}) \\
	(X_1, \ldots, X_{n-k})^* & 0
	\end{matrix}
	\right) \right\| \nonumber \\
	\le& |z| + 1 + C_0.
\end{align}
Thus, we choose $r_H$ to be a small number satisfying
\begin{align*}
	r_H < \dfrac{s_0}{4(|z| + 1 + C_0)},
\end{align*}
then by \eqref{eq-3.8-Hw large}, we can choose $s_0 < c$ to obtain
\begin{align*}
	\bP \left( \exists w' \in B(w, 2r_H): \|Hw'\| \le s_0/2, \|X\| \le C_0 \right)
	&\le \bP \left( \|Hw\| \le s_0, \|X\| \le C_0 \right) \\
	&\le \exp (-cn),
\end{align*}

Note that for small $\theta>0$ that will be determined in the sequel, and $I \subseteq [N+n-k]$ with $|I| = \theta (N+n-k)$, by Lemma \ref{Lemma-Compressible}, the set of unit vector in $\bS^{N+n-k-1}$ supported in $I$ has a $r_H$-net of cardinality bounded by $(3/r_H)^{2|I|}$. Thus,
\begin{align*}
	& \bP \left( \exists w' \in \Comp(\theta,r_H): \|Hw'\| \le s_0/2, \|X\| \le C_0 \right) \\
	\le& \sum_{I \subseteq [N+n-k], |I| = \theta (N+n-k)} \bP \left( \exists w' \in \bS^{2n-1} \cap B(\bS^{2n-1}_I,r_H): \|Hw'\| \le s_0/2, \|X\| \le C_0 \right) \\
	\le& \binom{N+n-k}{\theta (N+n-k)} \left( \dfrac{3}{r_H} \right)^{2\theta (N+n-k)} \bP \left( \exists w' \in B(w, 2r_H): \|Hw'\| \le s_0/2, \|X\| \le C_0 \right) \\
	\le& \binom{N+n-k}{\theta (N+n-k)} \left( \dfrac{3}{r_H} \right)^{2\theta (N+n-k)} \exp (-cn).
\end{align*}
By the Stirling formula,
\begin{align*}
	\binom{N+n-k}{\theta (N+n-k)}
	&= \dfrac{(N+n-k) \cdots (N+n-k - \theta (N+n-k) + 1)}{(\theta (N+n-k))!} \\
	&\le \dfrac{(N+n-k)^{\theta (N+n-k)}}{(\theta (N+n-k))!} \\
	&\sim \dfrac{(N+n-k)^{\theta (N+n-k)} e^{\theta (N+n-k)}}{\sqrt{2 \pi \theta (N+n-k)}(\theta (N+n-k))^{\theta (N+n-k)}} \\
	&= \dfrac{e^{\theta (N+n-k)}}{\sqrt{2 \pi \theta (N+n-k)}\theta^{\theta (N+n-k)}}.
\end{align*}
Hence, when $n$ large,
\begin{align*}
	& \bP \left( \exists w' \in \Comp(\theta,r_H): \|Hw'\| \le s_0/2, \|X\| \le C_0 \right) \\
	\le& \left( \dfrac{e}{\theta} \right)^{\theta (N+n-k)} \left( \dfrac{3}{r_H} \right)^{2\theta (N+n-k)} \exp (-cn) \\
	=& \exp \left( -cn + \theta (N+n-k) \ln \left( \dfrac{9e}{\theta r_H^2} \right) \right).
\end{align*}
Since $\theta \ln ((9e)/(\theta r_H^2))$ tends to zero as $\theta$ tends to zero, we can choose $\theta = \theta_0$, where $\theta_0$ is sufficiently small, such that
\begin{align*}
	\bP \left( \exists w' \in \Comp(\theta_0,r_H): \|Hw'\| \le s_0/2, \|X\| \le C_0 \right)
	\le \exp \left( -cn \right),
\end{align*}
which lead to \eqref{eq-3.4-Compressible}.

The proof of \eqref{eq-3.4-Compressible} for the case $2k+1>n$ is similar and is omitted.

\subsection{Estimate \texorpdfstring{\eqref{eq-3.5-Incompressible}}{(3.5)} for imcompressible vectors for the case \texorpdfstring{$2k+1 \le n$}{2k+1 <= n}}\label{step2}

We now establish the estimate \eqref{eq-3.5-Incompressible} for the case $2k+1 \le n$ with $\theta = \theta_0$ and $\rho = r_H$. By Lemma \ref{Lemma-Incompressible distance}, it is enough to prove
\begin{align} \label{eq-3.9-distance prob}
	\bP \left( \dist(H_{[N+n-k],l}, H_{-l}) \le n^{-13/11}, \|X\| \le C_0 \right) \le C n^{-1/22}, \ \forall l \in [N+n-k].
\end{align}
By Lemma \ref{Lemma-distant}, we have
\begin{align} \label{eq-3.10-distance}
	\dist(H_{[N+n-k],l}, H_{-l}) = \dfrac{\Num}{\Den},
\end{align}
where
\begin{align} \label{eq-3.11-Def Num}
	\Num = \left| H_{ll} - H_{l,[N+n-k] \setminus \{l\}} \left( H_{[N+n-k] \setminus \{l\}, [N+n-k] \setminus \{l\}} \right)^{-1} H_{[N+n-k] \setminus \{l\}, l} \right|,
\end{align}
and
\begin{align} \label{eq-3.12-Def Den}
	\Den = \sqrt{1 + \left\| H_{l,[N+n-k] \setminus \{l\}} \left( H_{[N+n-k] \setminus \{l\}, [N+n-k] \setminus \{l\}} \right)^{-1} \right\|^2}.
\end{align}

Next, we compute \eqref{eq-3.10-distance} for the three cases $N+k+1 \le l \le N+n-k$, $N+1 \le l \le N+k$ and $1 \le l \le N$ individually.

\subsubsection{Case of \texorpdfstring{$N+k+1 \le l \le N+n-k$}{N+k+1 <= l <= N+n-k}}\label{Case1}

We estimate \eqref{eq-3.10-distance} for the case $l = N+n-k$ first. Recalled the definition of $H$, we have
\begin{align} \label{eq-3.13-entries of H}
	&H_{N+n-k,N+n-k} = 0, \
	H_{N+n-k,[N+n-k-1]} = \left( X_{n-k}^*, \left( e_k^{(n-k-1)} \right)^\intercal \right), \nonumber \\
	&H_{[N+n-k-1],N+n-k} = \left(
	\begin{matrix}
	X_{n-k} \\
	e_{n-2k}^{(n-k-1)} \\
	\end{matrix}
	\right), \nonumber \\
	& H_{[N+n-k-1],[N+n-k-1]} = \left(
	\begin{matrix}
	zI_N & Y^{(1)} \\
	Y^{(2)} & B
	\end{matrix}
	\right) \nonumber \\
	&= \left(
	\begin{matrix}
	zI_N & \left( X_{n-k+1}, \ldots, X_n, X_{k+1}, \ldots, X_{n-k-1} \right) \\
	\left( X_1, \ldots, X_{n-k-1} \right)^* & \left( e_{n-2k+1}^{(n-k-1)}, \ldots, e_{n-k-1}^{(n-k-1)}, 0, e_1^{(n-k-1)}, \ldots, e_{n-2k-1}^{(n-k-1)} \right)
	\end{matrix}
	\right).
\end{align}

We first show that $H_{[N+n-k-1],[N+n-k-1]}$ is invertible almost surely. Apply the row operation to the determinant, we can see that
\begin{align*}
	\det \left( H_{[N+n-k-1],[N+n-k-1]} \right)
	= \det \left( B - z^{-1} Y^{(2)} Y^{(1)} \right).
\end{align*}
By a similar argument above, we can show that the determinant is a non-zero polynomial of the entries of $X$. Since the entries of $X$ have density, the determinant vanishes with probability zero.

Denote
\begin{align*}
	\left( H_{[N+n-k-1],[N+n-k-1]} \right)^{-1}
	= \left(
	\begin{matrix}
	D & E \\
	F & G
	\end{matrix}
	\right),
\end{align*}
where $D \in \bC^{N \times N}$, $G \in \bC^{(n-k-1) \times (n-k-1)}$.

Next, we compute the $\Num$ given by \eqref{eq-3.11-Def Num} and $\Den$ given by \eqref{eq-3.12-Def Den} individually.

\bigskip
\noindent\textbf{Step (a).} We consider $\Num$ first. Let $\xi = \left\{ \xi_1, \ldots, \xi_N \right\}$ be i.i.d. Bernoulli random variables with $\bP(\xi_1 = 1) = p$, where $p \in (0,1)$ will be determined in the sequel. Moreover, we choose these variables to be independent of everything else. Set $I = \{i \in [N]: \xi_i = 1\}$. Choose three random vectors $x,x',x'' \in \bC^{N}$, such that their entries are independent each other and independent of everything else, and that $x,x',x'' \overset{d}{=} X_{n-k}$. Set
\begin{align*}
	u = (x)_{I}, \
	v = (x')_{I^\complement}, \
	w = (x'')_{I^\complement}.
\end{align*}
Denote $\widetilde{X} = (X_1, \ldots, X_{n-k-1}, X_{n-k+1}, \ldots, X_n)$, then by \eqref{eq-3.11-Def Num}, \eqref{eq-3.13-entries of H}, Cauchy-Schwarz inequality and Lemma \ref{Lemma-quadratic form bound}, we have
\begin{align} \label{eq-3.14-expression by E and cS}
	& \bP \left( \Num \le t, \|X\| \le C_0 \right)^2 \nonumber \\
	\le& \bP \left( \Num \le t, \left\| \widetilde{X} \right\| \le C_0 \right)^2 \nonumber \\
	=& \bP \left( \left\| \widetilde{X} \right\| \le C_0, \right. \nonumber \\
	& \left. \left| X_{n-k}^* D X_{n-k} + \left(e_k^{(n-k-1)} \right)^\intercal F X_{n-k} + X_{n-k}^* E e_{n-2k}^{(n-k-1)} + \left(e_k^{(n-k-1)} \right)^\intercal G e_{n-2k}^{(n-k-1)} \right| \le t \right)^2 \nonumber \\
	=& \left( \bE_{\widetilde{X}} \left[ \bE_{X_{n-k}} \left[ 1_{\left| X_{n-k}^* D X_{n-k} + \left(e_k^{(n-k-1)} \right)^\intercal F X_{n-k} + X_{n-k}^* E e_{n-2k}^{(n-k-1)} + \left(e_k^{(n-k-1)} \right)^\intercal G e_{n-2k}^{(n-k-1)} \right| \le t} \right] 1_{\left\| \widetilde{X} \right\| \le C_0} \right] \right)^2 \nonumber \\
	\le& \bE_{\widetilde{X}} \left[ \left( \bE_{X_{n-k}} \left[ 1_{\left| X_{n-k}^* D X_{n-k} + \left(e_k^{(n-k-1)} \right)^\intercal F X_{n-k} + X_{n-k}^* E e_{n-2k}^{(n-k-1)} + \left(e_k^{(n-k-1)} \right)^\intercal G e_{n-2k}^{(n-k-1)} \right| \le t} \right] \right)^2 1_{\left\| \widetilde{X} \right\| \le C_0} \right] \nonumber \\
	\le& \bE_{\widetilde{X}} \left[ \bE_{v,w} \left[ \cS_u \left( (v-w)^* D_{I^\complement,I} u + u^* D_{I,I^\complement} (v-w), 2t \right) \right] 1_{\|\widetilde{X}\| \le C_0} \right] \nonumber \\
	=& \bE_{\widetilde{X}} \left[ \bE_{v,w} \left[ \cS_u \left( (\Pi_{I^\complement}(x'-x''))^* D \left( \Pi_Ix \right) + (\Pi_Ix)^* D \left( \Pi_{I^\complement}(x'-x'') \right), 2t \right) 1_{\|\widetilde{X}\| \le C_0} \right] \right].
\end{align}
Denote
\begin{align*}
	&y = \dfrac{D \left( \Pi_{I^\complement}(x'-x'') \right)}{\left\| D \left( \Pi_{I^\complement}(x'-x'') \right) \right\|}, \ 
	\alpha = \dfrac{\sqrt{n} \left\| D \left( \Pi_{I^\complement}(x'-x'') \right) \right\|}{\left\| D \right\|_{HS}}, \\
	&\tilde{y}^* = \dfrac{\left( \Pi_{I^\complement}(x'-x'') \right)^* D}{\left\| \left( \Pi_{I^\complement}(x'-x'') \right)^* D \right\|}, \ 
	\tilde{\alpha} = \dfrac{\sqrt{n} \left\| \left( \Pi_{I^\complement}(x'-x'') \right)^* D \right\|}{\left\| D \right\|_{HS}}.
\end{align*}
Here, we use the convention that $y=\tilde{y} = 0$ and $\alpha = \tilde{\alpha} = 0$ if $I = [N]$. Let $W_i = \tilde{\alpha} \overline{\tilde{y}}_i x_i + \alpha y_i \overline{x_i}$ for $i \in I$. When conditioning on $v,w$ and $\widetilde{X}$, $\{W_i: i \in I\}$ are independent. Besides, we have
\begin{align} \label{eq-3.15-simplify cS}
	(\Pi_{I^\complement}(x'-x''))^* D \left( \Pi_Ix \right) + (\Pi_Ix)^* D \left( \Pi_{I^\complement}(x'-x'') \right)
	&= \dfrac{\left\| D \right\|_{HS}}{\sqrt{n}} (\tilde{\alpha} \tilde{y}^* \left( \Pi_Ix \right) + (\Pi_I x)^* y \alpha) \nonumber \\
	&= \dfrac{\left\| D \right\|_{HS}}{\sqrt{n}} \sum_{i \in I} W_i.
\end{align}
Hence, by \eqref{eq-3.14-expression by E and cS}, \eqref{eq-3.15-simplify cS}, Definition \ref{Def-small ball prob} and Lemma \ref{Lemma-small ball prob monotone},
\begin{align} \label{eq-3.16-Num prob simplify}
	\bP \left( \Num \le t, \|X\| \le C_0 \right)^2
	&\le \bE_{\widetilde{X}} \left[ \bE_{v,w} \left[ \cS_u \left( \dfrac{\left\| D \right\|_{HS}}{\sqrt{n}} \sum_{i \in I} W_i, 2t \right) 1_{\left\| \widetilde{X} \right\| \le C_0} \right] \right] \nonumber \\
	&= \bE_{\widetilde{X}} \left[ \bE_{v,w} \left[ \cS_u \left( \sum_{i \in I} W_i, \dfrac{2\sqrt{n}t}{\left\| D \right\|_{HS}} \right) 1_{\left\| \widetilde{X} \right\| \le C_0} \right] \right] \nonumber \\
	&\le \bE_{\widetilde{X}} \left[ \bE_{v,w} \left[ \cS_u \left( \sum_{i \in I \cap J} W_i, \dfrac{2\sqrt{n}t}{\left\| D \right\|_{HS}} \right) 1_{\left\| \widetilde{X} \right\| \le C_0} \right] \right],
\end{align}
where the index set $J$ is given by
\begin{align} \label{eq-def-J}
	J = \left\{ j \in [N]: \dfrac{r_H}{\sqrt{N}} \le |y_j| \le \dfrac{2}{\sqrt{\theta_0N}}, |\tilde{y}_j| \le \dfrac{2}{\sqrt{\theta_0N}} \right\}.
\end{align}
Note that $x_i$ has the same distribution as $X_{11}^{(n)}$, by conditions (C1), (C2) and Cauchy-Schwarz inequality, we have
\begin{align} \label{eq-3.17-Zi 2nd moment}
	\sum_{i \in I \cap J} \bE_u \left[ |W_i|^2 \right]
	&= \sum_{i \in I \cap J} \left( \left( \tilde{\alpha}^2 \left| \tilde{y}_i \right|^2 + \alpha^2 |y_i|^2 \right) \bE \left[ \left| x_i \right|^2 \right] + 2 \alpha \tilde{\alpha} \Re \left( \bE \left[ x_i^2 \right] \overline{\tilde{y}_i} \overline{y_i} \right) \right) \nonumber \\
	&= \sum_{i \in I \cap J} \left( \left( \tilde{\alpha}^2 \left| \tilde{y}_i \right|^2 + \alpha^2 |y_i|^2 \right) \bE \left[ \left| X_{11}^{(n)} \right|^2 \right] + 2 \alpha \tilde{\alpha} \Re \left( \bE \left[ \left( X_{11}^{(n)} \right)^2 \right] \overline{\tilde{y}_i} \overline{y_i} \right) \right) \nonumber \\
	&\ge \sum_{i \in I \cap J} \left( \left( \tilde{\alpha}^2 \left| \tilde{y}_i \right|^2 + \alpha^2 |y_i|^2 \right) \bE \left[ \left| X_{11}^{(n)} \right|^2 \right] - 2 \alpha \tilde{\alpha} \left| \bE \left[ \left( X_{11}^{(n)} \right)^2 \right] \right| \left| \tilde{y}_i \right| \left| y_i \right| \right) \nonumber \\
	&\ge \sum_{i \in I \cap J} \left( \tilde{\alpha}^2 \left| \tilde{y}_i \right|^2 + \alpha^2 |y_i|^2 \right) \left( \dfrac{1}{n} - \left| \bE \left[ \left( X_{11}^{(n)} \right)^2 \right] \right| \right) \nonumber \\
	&\ge \dfrac{c_0}{nN} r_H^2 \alpha^2 |I \cap J|,
\end{align}
and
\begin{align} \label{eq-3.18-Zi 3rd moment}
	\sum_{i \in I \cap J} \bE_u \left[ |W_i|^3 \right]
	&\le 4 \sum_{i \in I \cap J} \bE_u \left[ |\tilde{\alpha} \overline{\tilde{y}}_i u_i|^3 + |\alpha y_i \overline{u_i}|^3 \right] \nonumber \\
	&\le \dfrac{32}{(\theta_0 N)^{3/2}} \sum_{i \in I \cap J} \bE_u \left[ \tilde{\alpha}^3 |u_i|^3 + \alpha^3 |\overline{u_i}|^3 \right] \nonumber \\
	&= \dfrac{32}{(\theta_0 N)^{3/2}} (\alpha^3 + \tilde{\alpha}^3) \bE \left[ \left| X_{11}^{(n)} \right|^3 \right] |I \cap J| \nonumber \\
	&\le \dfrac{32}{(\theta_0 N)^{3/2}} (\alpha^3 + \tilde{\alpha}^3) \left( \bE \left[ \left| X_{11}^{(n)} \right|^4 \right] \right)^{3/4} |I \cap J| \nonumber \\
	&\le \dfrac{C}{(\theta_0 N n)^{3/2}} (\alpha^3 + \tilde{\alpha}^3) |I \cap J|.
\end{align}
Here, $C$ is a large constant. Hence, by \eqref{eq-3.17-Zi 2nd moment}, \eqref{eq-3.18-Zi 3rd moment} and Lemma \ref{Lemma-small ball prob-upper bound},
\begin{align} \label{eq-3.16-upper bound of cS}
	\cS_u \left( \sum_{i \in I \cap J} W_i, \dfrac{2\sqrt{n}t}{\left\| D \right\|_{HS}} \right)
	\le& \dfrac{c'}{\sqrt{\sum_{i \in I \cap J} \bE_u [|W_i|^2]}} \dfrac{2\sqrt{n}t}{\left\| D \right\|_{HS}} + \dfrac{c'\sum_{i \in I \cap J} \bE_u [|W_i|^3]}{\left( \sum_{i \in I \cap J} \bE_u [|W_i|^2] \right)^{3/2}} \nonumber \\
	\le& \dfrac{C n \sqrt{N} t}{r_H \alpha \sqrt{|I \cap J|} \left\| D \right\|_{HS}} + \dfrac{C (\alpha^3 + \tilde{\alpha}^3)}{\theta_0^{3/2} r_H^3 \alpha^3 \sqrt{|I \cap J|}}.
\end{align}
Here, we use the convention that the right hand side of \eqref{eq-3.16-upper bound of cS} is $1$ if $|I \cap J| = \emptyset$.

Next, we estimate the lower bound of $|I \cap J|$. Take $p = 1 - \theta_0/4$ and set the event
\begin{align*}
	\cE_I = \left\{ |I| > N (1-\theta_0/3) \right\}
	= \left\{ \sum_{i=1}^N \xi_i > N (1-\theta_0/3) \right\},
\end{align*}
then by Hoeffding concentration inequality (Lemma \ref{Lemma-Hoeffding concentration}),
\begin{align} \label{eq-3.17-prob large |I|}
	\bP \left( \cE_I^\complement \right)
	= \bP \left( \sum_{i=1}^N \xi_i \le N (1-\theta_0/3) \right)
	= \bP \left( \sum_{i=1}^N \left( \xi_i - \bE [\xi_i] \right) \le -N \theta_0/12 \right)
	\le \exp (- c N \theta_0^2).
\end{align}

Now we estimate $|J|$. To do this, we need the following estimation on the matrix $D$ from Lemma~\ref{Lemma-similar Coro 14} (established later):
for any deterministic vector $d \in \bC^N \setminus \{0\}$,
\begin{align*}
	\bP \left( \dfrac{D d}{\left\| D d \right\|} \in \Comp(\theta_0,r_H), \left\| \widetilde{X} \right\| \le C_0 \right) \le \exp (-cN).
\end{align*}

Since, the entries of $x'$ and $x''$ are independent and have the same distribution as $X_{11}^{(n)}$, which has continuous density. If $I^\complement \not= \emptyset$, then $\Pi_{I^\complement}(x'-x'') = 0$ with probability zero. Denote
\begin{align*}
	\cE_{y,\Incomp} = \left\{ y \in \Incomp(\theta_0,r_H) \right\},
\end{align*}
then by Lemma \ref{Lemma-similar Coro 14},
\begin{align} \label{eq-3.23-prob cE Incomp}
	& \bP \left( \cE_{y,\Incomp}^\complement \cap \left\{ \left\| \widetilde{X} \right\| \le C_0 \right\} \right) \nonumber \\
	\le& \bP \left( I = [N] \right) + \bP \left( \dfrac{D \left( \Pi_{I^\complement}(x'-x'') \right)}{\left\| D \left( \Pi_{I^\complement}(x'-x'') \right) \right\|} \in \Comp(\theta_0, r_H), \Pi_{I^\complement}(x'-x'') \not= 0 \right) \nonumber \\
	=& \bP \left( I = [N] \right) + \bE_{x',x''} \left[\bP \left( \left. \dfrac{D \left( \Pi_{I^\complement}(x'-x'') \right)}{\left\| D \left( \Pi_{I^\complement}(x'-x'') \right) \right\|} \in \Comp(\theta_0, r_H) \right| x',x''\right) 1_{\Pi_{I^\complement}(x'-x'') \not= 0} \right] \nonumber \\
	\le& \bP \left( I = [N] \right) + \exp(-cN) \nonumber \\
	=& p^N + \exp (-cN) \nonumber \\
	\le& \exp (-cN).
\end{align}
On $\cE_{y,\Incomp}$, by Lemma \ref{Lemma-Incompressible}, $|J| \ge \theta_0 N/2$. Thus, by \eqref{eq-3.17-prob large |I|}, on the event $\cE_{y,\Incomp} \cap \cE_I$,
\begin{align} \label{eq-3.24-|I cap J| lower bound}
	|I \cap J|
	\ge |I| + |J| - N
	> N(1-\theta_0/3) + \theta_0N/2 - N
	= \theta_0N/6.
\end{align}

Next, we estimate $(\alpha^3 + \tilde{\alpha}^3) /\alpha^3$ and $\alpha^{-1}$ in \eqref{eq-3.16-upper bound of cS}. Denote $\cE_{\alpha} = \left\{ \gamma \le \alpha \le \beta^{-1}, \tilde{\alpha} \le \beta^{-1} \right\}$, where $\beta$ and $\gamma$ are $o(1)$ and will be determined in the sequel. Then on the event $\cE_{\alpha}$, we have
\begin{align} \label{eq-3.25-estimation on alpha}
	\dfrac{\alpha^3 + \tilde{\alpha}^3}{\alpha^3} \le 2 \beta^{-3} \gamma^{-3}, \
	\alpha^{-1} \le \gamma^{-1}.
\end{align}

Next, we show that the event $\cE_{\alpha}$ has high probability. Indeed, we have
\begin{align} \label{eq-3.26-cE alpha}
	\bP \left( \cE_{\alpha}^\complement, \left\| \widetilde{X} \right\| \le C_0 \right)
	\le \bP \left( \alpha < \gamma, \left\| \widetilde{X} \right\| \le C_0 \right)
	+ \bP \left( \alpha > \beta^{-1} \right)
	+ \bP \left( \tilde{\alpha} > \beta^{-1} \right).
\end{align}
Recalled that $\left( \Pi_{I^\complement}(x'-x'') \right)_i = (1-\xi_i) (x' - x'')_i$, so the entries of $\Pi_{I^\complement}(x'-x'')$ are i.i.d. with mean zero and variance $2(1-p)/n$. Thus, by Markov inequality, the second term of the right hand side of \eqref{eq-3.26-cE alpha} is
\begin{align} \label{eq-3.27-2rd of 3.26}
	& \bP \left( \alpha > \beta^{-1} \right) \nonumber \\
	=& \bE_{\widetilde{X}} \left[ \bP \left( \alpha > \beta^{-1} | \widetilde{X} \right) \right] \nonumber \\
	=& \bE_{\widetilde{X}} \left[ \bP \left( \left. \left\| D \left( \Pi_{I^\complement}(x'-x'') \right) \right\| > \dfrac{\left\| D \right\|_{HS}}{\sqrt{n} \beta} \right| \widetilde{X} \right) \right] \nonumber \\
	\le& n \beta^2 \bE_{\widetilde{X}} \left[ \left\| D \right\|_{HS}^{-2} \bE \left[ \left. \left\| D \left( \Pi_{I^\complement}(x'-x'') \right) \right\|^2 \right| \widetilde{X} \right] \right] \nonumber \\
	=& n \beta^2 \bE_{\widetilde{X}} \left[ \left\| D \right\|_{HS}^{-2} \bE \left[ \left. \sum_{i_1,i_2,i_3=1}^n \overline{(\Pi_{I^\complement}(x'-x''))}_{i_1} D^*_{i_1i_2} D_{i_2i_3} (\Pi_{I^\complement}(x'-x''))_{i_3} \right| \widetilde{X} \right] \right] \nonumber \\
	=& 2(1-p) \beta^2 \bE_{\widetilde{X}} \left[ \left\| D \right\|_{HS}^{-2} \sum_{i_1,i_2=1}^n D^*_{i_1i_2} D_{i_2i_1} \right] \nonumber \\
	=& 2(1-p) \beta^2.
\end{align}
By the same argument, the third term of the right hand side of \eqref{eq-3.26-cE alpha} is also bounded by $2(1-p) \beta^2$. Next, we deal with the first term of \eqref{eq-3.26-cE alpha}. We denote
\begin{align*}
	\left( \tilde{u}^{(i)} \right)^* = \dfrac{ \left( e_i^{(N)} \right)^* D}{\left\| \left( e_i^{(N)} \right)^* D \right\|}, i \in [N].
\end{align*}
Then we have
\begin{align} \label{eq-3.35-entrywise}
	\left\|D \left( \Pi_{I^\complement}(x'-x'') \right) \right\|^2
	=& \sum_{i=1}^N \left| \left( e_i^{(N)} \right)^* D \left( \Pi_{I^\complement}(x'-x'') \right) \right|^2 \nonumber \\
	=& \sum_{i=1}^N \left\| \left( e_i^{(N)} \right)^* D \right\|^2 \left| \left( \tilde{u}^{(i)} \right)^* \left( \Pi_{I^\complement}(x'-x'') \right) \right|^2.
\end{align}
Denote
\begin{align*}
	I_i = \left\{ j \in [N]: \dfrac{r_H}{\sqrt{N}} \le |\tilde{u}^{(i)}_j| \le \dfrac{2}{\sqrt{\theta_0 N}} \right\}.
\end{align*}
Note that
\begin{align*}
	\sum_{i=1}^n \left\| \left( e_i^{(N)} \right)^* D \right\|^2 = \left\| D \right\|_{HS}^2,
\end{align*}
by Lemma \ref{Lemma-probability of weight rv}, Lemma \ref{Lemma-small ball prob monotone} and Lemma \ref{Lemma-small ball prob-upper bound}, we have
\begin{align} \label{eq-3.27'}
	& \bP_{v,w,\xi} \left( \sum_{i=1}^N \left\| \left( e_i^{(N)} \right)^* D \right\|^2 \left| \left( \tilde{u}^{(i)} \right)^* \left( \Pi_{I^\complement}(x'-x'') \right) \right|^2 \le \dfrac{\gamma^2}{n} \left\| D \right\|_{HS}^2 \right) \nonumber \\
	\le& 2 \sum_{i=1}^N \dfrac{\left\| \left( e_i^{(N)} \right)^* D \right\|^2}{\left\| D \right\|_{HS}^2} \bP_{v,w,\xi} 
	\left( \left| \left( \tilde{u}^{(i)} \right)^* \left( \Pi_{I^\complement}(x'-x'') \right) \right|^2 \le \dfrac{2\gamma^2}{n} \right) \nonumber \\
	\le& 2 \sum_{i=1}^N \dfrac{\left\| \left( e_i^{(N)} \right)^* D \right\|^2}{\left\| D \right\|_{HS}^2} \cS_{v,w,\xi} \left( \sum_{j \in [N]} \overline{\tilde{u}^{(i)}_j} \left( \Pi_{I^\complement}(x'-x'') \right)_j, \dfrac{\sqrt{2} \gamma}{\sqrt{n}} \right) \nonumber \\
	\le& 2 \sum_{i=1}^N \dfrac{\left\| \left( e_i^{(N)} \right)^* D \right\|^2}{\left\| D \right\|_{HS}^2} \cS_{v,w,\xi} \left( \sum_{j \in I_i} \overline{\tilde{u}^{(i)}_j} \left( \Pi_{I^\complement}(x'-x'') \right)_j, \dfrac{\sqrt{2} \gamma}{\sqrt{n}} \right) \nonumber \\
	\le& 2 \sum_{i=1}^N \dfrac{\left\| \left( e_i^{(N)} \right)^* D \right\|^2}{\left\| D \right\|_{HS}^2} \left( \dfrac{C \sqrt{N} \gamma}{\sqrt{1-p} r_H \sqrt{|I_i|}} + \dfrac{C}{\sqrt{1-p} \theta_0^{3/2} r_H^3 \sqrt{|I_i|}} \right) \nonumber \\
	\le& 2 \left( \dfrac{C \sqrt{N} \gamma}{\sqrt{1-p} r_H} + \dfrac{C}{\sqrt{1-p} \theta_0^{3/2} r_H^3} \right) \max_{i \in [N]} \dfrac{1}{\sqrt{|I_i|}}.
\end{align}
Let
\begin{align*}
	\cE_{u,\Incomp} = \left\{ \tilde{u}^{(i)} \in \Incomp(\theta_0, r_H), \forall i \in [N] \right\}.
\end{align*}
Then by Lemma \ref{Lemma-Incompressible}, on $\cE_{u,\Incomp}$, $|I_i| \ge \theta_0 N/2$ for all $i \in [N]$. Moreover, by Remark \ref{Remark--similar Coro 14}, $\bP \left( \cE_{u,\Incomp}^\complement, \left\| \widetilde{X} \right\| \le C_0 \right) \le N \exp(-cN)$.
Thus, by \eqref{eq-3.35-entrywise} and \eqref{eq-3.27'}, the first term of \eqref{eq-3.26-cE alpha} is
\begin{align} \label{eq-3.28-1st of 3.26}
	& \bP \left( \alpha < \gamma, \left\| \widetilde{X} \right\| \le C_0 \right) \nonumber \\
	=& \bP \left( \left\|D \left( \Pi_{I^\complement}(x'-x'') \right) \right\| < \dfrac{\gamma}{\sqrt{n}} \left\| D \right\|_{HS}, \left\| \widetilde{X} \right\| \le C_0 \right) \nonumber \\
	=& \bP \left( \sum_{i=1}^N \left\| \left( e_i^{(N)} \right)^* D \right\|^2 \left| \left( \tilde{u}^{(i)} \right)^* \left( \Pi_{I^\complement}(x'-x'') \right) \right|^2 < \dfrac{\gamma^2}{n} \left\| D \right\|_{HS}^2, \left\| \widetilde{X} \right\| \le C_0 \right) \nonumber \\
	=& \bE_{\widetilde{X}} \left[ \bP \left( \left. \sum_{i=1}^N \left\| \left( e_i^{(N)} \right)^* D \right\|^2 \left| \left( \tilde{u}^{(i)} \right)^* \left( \Pi_{I^\complement}(x'-x'') \right) \right|^2 < \dfrac{\gamma^2}{n} \left\| D \right\|_{HS}^2 \right| \widetilde{X} \right) 1_{\left\| \widetilde{X} \right\| \le C_0} \right] \nonumber \\
	=& \bE_{\widetilde{X}} \left[ \bP \left( \left. \sum_{i=1}^N \left\| \left( e_i^{(N)} \right)^* D \right\|^2 \left| \left( \tilde{u}^{(i)} \right)^* \left( \Pi_{I^\complement}(x'-x'') \right) \right|^2 < \dfrac{\gamma^2}{n} \left\| D \right\|_{HS}^2 \right| \widetilde{X} \right) 1_{\cE_{u,\Incomp}} 1_{\left\| \widetilde{X} \right\| \le C_0} \right] \nonumber \\
	& +\bE_{\widetilde{X}} \left[ \bP \left( \left. \sum_{i=1}^N \left\| \left( e_i^{(N)} \right)^* D \right\|^2 \left| \left( \tilde{u}^{(i)} \right)^* \left( \Pi_{I^\complement}(x'-x'') \right) \right|^2 < \dfrac{\gamma^2}{n} \left\| D \right\|_{HS}^2 \right| \widetilde{X} \right) 1_{\cE_{u,\Incomp}^\complement} 1_{\left\| \widetilde{X} \right\| \le C_0} \right] \nonumber \\
	\le& 2 \left( \dfrac{C \sqrt{N} \gamma}{\sqrt{1-p} r_H} + \dfrac{C}{\sqrt{1-p} \theta_0^{3/2} r_H^3} \right) \dfrac{\sqrt{2}}{\sqrt{\theta_0 N}} + \bP \left( \cE_{u,\Incomp}^\complement, \left\| \widetilde{X} \right\| \le C_0 \right) \nonumber \\
	\le& \dfrac{C \gamma}{\sqrt{(1-p)\theta_0}r_H} + \dfrac{C}{\sqrt{(1-p)} \theta_0^2 r_H^3 \sqrt{N}} + N \exp(-cN).
\end{align}
Therefore, by \eqref{eq-3.26-cE alpha}, \eqref{eq-3.27-2rd of 3.26} and \eqref{eq-3.28-1st of 3.26},
\begin{align} \label{eq-3.29-prob cE alpha}
	\bP \left( \cE_{\alpha}^\complement, \left\| \widetilde{X} \right\| \le C_0 \right)
	\le 4(1-p) \beta^2 + \dfrac{C \gamma}{\sqrt{(1-p)\theta_0}r_H} + \dfrac{C}{\sqrt{(1-p)} \theta_0^2 r_H^3 \sqrt{N}} + N \exp(-cN).
\end{align}

Lastly, by \eqref{eq-3.16-Num prob simplify}, \eqref{eq-3.16-upper bound of cS}, \eqref{eq-3.17-prob large |I|}, \eqref{eq-3.23-prob cE Incomp}, \eqref{eq-3.24-|I cap J| lower bound}, \eqref{eq-3.25-estimation on alpha} and \eqref{eq-3.29-prob cE alpha},
\begin{align*}
	& \bP \left( \Num \le s \left\| D \right\|_{HS}, \|X\| \le C_0 \right)^2 \\
	\le& \bE_{\widetilde{X}} \left[ \bE_{v,w} \left[ \cS_u \left( \sum_{i \in I \cap J} W_i, 2\sqrt{n}s \right) 1_{\|\widetilde{X}\| \le C_0} \right] \right] \\
	\le& \bE_{\widetilde{X}} \left[ \bE_{v,w} \left[ \cS_u \left( \sum_{i \in I \cap J} W_i, 2\sqrt{n}s \right) 1_{\{\|\widetilde{X}\| \le C_0\} \cap \cE_{y,\Incomp} \cap \cE_I \cap \cE_{\alpha}} \right] \right] \nonumber \\
	&+ \bE_{\widetilde{X}} \left[ \bE_{v,w} \left[ 1_{\{\left\| \widetilde{X} \right\| \le C_0\} \cup \cE_{y,\Incomp}^\complement \cup \cE_I^\complement \cup \cE_{\alpha}^\complement} \right] \right] \\
	\le& \bE_{\widetilde{X}} \left[ \bE_{v,w} \left[ \left( \dfrac{C n\sqrt{N} s}{r_H \alpha \sqrt{|I \cap J|}} + \dfrac{C (\alpha^3 + \tilde{\alpha}^3)}{\theta_0^{3/2} r_H^3 \alpha^3 \sqrt{|I \cap J|}} \right) 1_{\{\|\widetilde{X}\| \le C_0\} \cap \cE_{y,\Incomp} \cap \cE_I \cap \cE_{\alpha}} \right] \right] \\
	& + \bP \left( \left\{ \left\| \widetilde{X} \right\| \le C_0 \right\} \cup \cE_{y,\Incomp}^\complement \cup \cE_I^\complement \cup \cE_{\alpha}^\complement \right) \\
	\le& \dfrac{C n s}{r_H \gamma \sqrt{\theta_0}} + \dfrac{C \beta^{-3} \gamma^{-3}}{\theta_0^2 r_H^3 \sqrt{N}} + \bP \left( \left\{ \left\| \widetilde{X} \right\| \le C_0 \right\} \cup \cE_{y,\Incomp}^\complement \cup \cE_I^\complement \cup \cE_{\alpha}^\complement \right) \\
	\le& \dfrac{C n s}{r_H \gamma \sqrt{\theta_0}} + \dfrac{C \beta^{-3} \gamma^{-3}}{\theta_0^2 r_H^3 \sqrt{N}} + \bP \left( \left\{ \left\| \widetilde{X} \right\| \le C_0 \right\} \cup \cE_{y,\Incomp}^\complement \right) + \bP \left( \cE_I^\complement \right) + \bP \left( \left\{ \left\| \widetilde{X} \right\| \le C_0 \right\} \cup \cE_{\alpha}^\complement \right) \\
	\le& C(\theta_0, r_H) \dfrac{ns}{\gamma} + \dfrac{C(\theta_0, r_H)}{\beta^3 \gamma^3 \sqrt{N}} + \exp (- c N) + \exp \left( -cN \theta_0^2 \right) + C(\theta_0) \beta^2 + C(\theta_0, r_H) \gamma \nonumber \\
	& + \dfrac{C(\theta_0, r_H)}{\sqrt{N}} + N \exp (-cN) \nonumber \\
	\le& C(\theta_0,r_H) \left( \dfrac{ns}{\gamma} + \dfrac{1}{\beta^3 \gamma^3 \sqrt{N}} + \beta^2 + \gamma + \dfrac{1}{\sqrt{N}} \right).
\end{align*}
Here, $C(\theta_0, r_H)$ is a large positive constant that may depend on $\theta_0, r_H$ (and $z$). Then we may choose $\beta  = n^{-1/22}$ and $\gamma = n^{-1/11}$ to obtain
\begin{align} \label{eq-3.30-Prob on Num}
	\bP \left( \Num \le s \left\| D \right\|_{HS}, \|X\| \le C_0 \right)^2
	\le C(\theta_0, r_H) \left( n^{12/11} s + n^{-1/11} \right).
\end{align}

\medskip
\noindent\textbf{Step (b).} We compute $\Den$ given by \eqref{eq-3.12-Def Den}. Recalled the definition of $H$ in \eqref{eq-3.13-entries of H} and the blocking of $\left( H_{[N+n-k-1],[N+n-k-1]} \right)^{-1}$, we have the following identity
\begin{align} \label{eq-3.31-identity H H^-1}
	\left(
	\begin{matrix}
	zI_N & Y^{(1)} \\
	Y^{(2)} & B
	\end{matrix}
	\right) \left(
	\begin{matrix}
	D & E \\
	F & G
	\end{matrix}
	\right) = \left(
	\begin{matrix}
	D & E \\
	F & G
	\end{matrix}
	\right) \left(
	\begin{matrix}
	zI_N & Y^{(1)} \\
	Y^{(2)} & B
	\end{matrix}
	\right) = I_{N+n-k-1}.
\end{align}

We first control $\|F\|$ with high probability. For any $u \in \bS^{N-1}$, denote $v = Du$ and $w = Fu$ then we have the identity
\begin{align} \label{eq-3.38}
	\left(
	\begin{matrix}
	u \\ 0
	\end{matrix}
	\right) = \left(
	\begin{matrix}
	zI_N & Y^{(1)} \\
	Y^{(2)} & B
	\end{matrix}
	\right) \left(
	\begin{matrix}
	v \\ w
	\end{matrix}
	\right).
\end{align}
Hence, on the event $\left\{ \|X\| \le C_0 \right\}$, we have
\begin{align} \label{eq-3.41-norm of w except kth}
	\left\| B^*Bw \right\| = \left\| B^*Y^{(2)} v \right\| \le C_0 \|v\|.
\end{align}
Denote the event $\cE_{X_n} = \left\{ \|X_n\| \ge c \right\}$ for a small constant $c$. Then by Lemma \ref{Lemma-concentration to space},
\begin{align} \label{eq-3.39-prob on cE Xn}
	\bP \left( \cE_{X_n}^\complement \right) \le \exp (-cN).
\end{align}
As explained in the proof of Lemma \ref{Lemma-similar Coro 14}, by \eqref{eq-3.22-decomposition}, \eqref{eq-3.41-norm of w except kth} and \eqref{eq-3.38}, on the event $\cE_{X_n}$, we have
\begin{align*}
	\|Fu\|^2 = \|w\|^2
	=& \left\| B^*Bw \right\|^2 + |w_k|^2 \\
	\le& C_0^2 \|v\|^2 + \dfrac{|w_k|^2}{c^2} \left\| X_n \right\|^2 \\
	\le& C_0^2 \|v\|^2 + \dfrac{1}{c^2} \left\| Y^{(1)}w - Y^{(1)} B^*Bw \right\|^2 \\
	\le& C_0^2 \|v\|^2 + \dfrac{1}{c^2} \left\| u - zv - Y^{(1)} B^*Bw \right\|^2 \\
	\le& C \left( 1 + \|v\|^2 \right) \\
	=& C \left( 1 + \|Du\|^2 \right).
\end{align*}
Take the supremum over $u$, we obtain $\|F\| \le C(z) (1 + \|D\|)$ on the event $\cE_{X_n}$.

Next, on the event $\left\{ \|X\| \le C_0 \right\}$, we have $\|D\| \ge c$ for a small constant $c$. Suppose not, then by \eqref{eq-3.31-identity H H^-1}, $\|BF\| = \left\| Y^{(2)} D \right\| \le C_0c$. An argument that similar to \eqref{eq-3.22-decomposition}, we may obtain from \eqref{eq-3.31-identity H H^-1} that $I_N = zD + Y^{(1)} F = zD + Y^{(1)} B^*BF + X_n F_{k,[N]}$. Thus, $\left\| I_N - X_n F_{k,[N]} \right\| \le |z|c + C_0^2c$. Then we can find an unit eigenvector $v'$ of the rank $1$ matrix $X_n F_{k,[N]}$ associate to the eigenvalue $0$. Then $\left\|I_N - X_n F_{k,[N]} \right\| \ge \left\| \left(I_N - X_n F_{k,[N]} \right) v' \right\| = \|v'\| = 1$, which leads to a contradiction if we choose $c$ to be small enough. Thus, on the event $\cE_{X_n} \cap \left\{ \|X\| \le C_0 \right\}$, we have $\|F\| \le C \|D\|$.

The control on $\|E\|$ and $\|G\|$ are similar, which are sketched below. By the identity
\begin{align*}
	(u^\intercal,0) = (u^\intercal D, u^\intercal E) \left(
	\begin{matrix}
		zI_N & Y^{(1)} \\
		Y^{(2)} & B
	\end{matrix}
	\right), u \in \bS^{N-1},
\end{align*}
and \eqref{eq-3.31-identity H H^-1}, we have
\begin{align} \label{eq-E norm}
	\|u^\intercal E\|^2
	=& \|u^\intercal E BB^*\|^2 + |(u^\intercal E)_{n-2k}|^2 \nonumber \\
	\le& \|u^\intercal E BB^*\|^2 + \dfrac{1}{c^2} |(u^\intercal E)_{n-2k}|^2 \|X_{n-2k}\|^2 \nonumber \\
	=& \|u^\intercal E BB^*\|^2 + \dfrac{1}{c^2} \|u^\intercal E (I _{n-k-1} - BB^*) Y^{(2)}\|^2 \nonumber \\
	=& \|u^\intercal E BB^*\|^2 + \dfrac{1}{c^2} \|u^\intercal E Y^{(2)} - u^\intercal E BB^* Y^{(2)}\|^2 \nonumber \\
	=& \|u^\intercal D Y^{(1)} B^*\|^2 + \dfrac{1}{c^2} \| u^\intercal - z u^\intercal D + u^\intercal DY^{(1)} B^* Y^{(2)} \|^2 \nonumber \\
	\le& C (1+\|D\|^2)
	\le C\|D\|^2.
\end{align}
on the event $\left\{ \|X\| \le C_0 \right\} \cap \cE_{X_{n-2k}}$, where $\cE_{X_{n-2k}} = \left\{ \|X_{n-2k}\| \ge c \right\}$ for a small constant $c$. Note that by Lemma \ref{Lemma-concentration to space}, we have also
\begin{align} \label{eq-3.36}
	\bP \left( \cE_{X_{n-2k}}^\complement \right) \le \exp (-cN).
\end{align}
By the identity
\begin{align*}
	\left(
	\begin{matrix}
		0 \\ u
	\end{matrix}
	\right) = \left(
	\begin{matrix}
		zI_N & Y^{(1)} \\
		Y^{(2)} & B
	\end{matrix}
	\right) \left(
	\begin{matrix}
		Eu \\ Gu
	\end{matrix}
	\right), u \in \bS^{n-k-1},
\end{align*}
and \eqref{eq-3.31-identity H H^-1}, we have
\begin{align} \label{eq-G norm}
	\|Gu\|^2
	=& \left\| B^*B Gu \right\|^2 + |(Gu)_k|^2 \nonumber \\
	\le& \left\| B^*B Gu \right\|^2 + \dfrac{|(Gu)_k|^2}{c^2} \left\| X_n \right\|^2 \nonumber \\
	\le& \left\| B^*B Gu \right\|^2 + \dfrac{1}{c^2} \left\| Y^{(1)} Gu - Y^{(1)} B^*B Gu \right\|^2 \nonumber \\
	\le& \left\| B^* (I_{n-k-1} - Y^{(2)} E) u \right\|^2 + \dfrac{1}{c^2} \left\| -z Eu - Y^{(1)} B^*B (I_{n-k-1} - Y^{(2)} E) u \right\|^2 \nonumber \\
	\le& C (1 + \|E\|^2)
	\le C (\|D\|^2 + \|E\|^2),
\end{align}
on the event $\left\{ \|X\| \le C_0 \right\} \cap \cE_{X_n}$. Take supremum over $u$ in \eqref{eq-E norm} to obtain $\|E\| \le C \|D\|$ on the event $\left\{ \|X\| \le C_0 \right\} \cap \cE_{X_{n-2k}}$. Then take supremum over $u$ in \eqref{eq-G norm} to obtain $\|G\| \le C \|D\|$ on the event $\left\{ \|X\| \le C_0 \right\} \cap \cE_{X_{n-2k}} \cap \cE_{X_n}$.

Hence, on the event $\cE_{X_{n-2k}} \cap \cE_{X_n} \cap \{\|X\| \le C_0\}$, we have
\begin{align} \label{eq-3.38'}
	\Den^2 =& 1 + \left\| \left( X_{n-k}^*, \left( e_k^{(n-k-1)} \right)^\intercal \right) \left(
	\begin{matrix}
	D & E \\
	F & G
	\end{matrix}
	\right) \right\|^2 \nonumber \\
	\le& 1 + (1+C_0^2) \left( \|D\|^2 + \|E\|^2 + \|F\|^2 + \|G\|^2 \right) \nonumber \\
	\le& C \|D\|^2 \nonumber \\
	\le& C \|D\|_{HS}^2,
\end{align}
where we use the norm relationship $\|D\| \le \|D\|_{HS}$.

Therefore, by \eqref{eq-3.10-distance}, \eqref{eq-3.38'}, \eqref{eq-3.39-prob on cE Xn}, \eqref{eq-3.36} and \eqref{eq-3.30-Prob on Num},
\begin{align*}
	& \bP \left( \dist(H_{[N+n-k],N+n-k}, H_{-(N+n-k)}) \le s, \|X\| \le C_0) \right) \\
	=& \bP \left( \dfrac{\Num}{\Den} \le s, \|X\| \le C_0 \right) \\
	\le& \bP \left( \dfrac{\Num}{\Den} \le s, \cE_{X_{n-2k}} \cap \cE_{X_n} \cap \{\|X\| \le C_0\} \right)
	+ \bP \left( \cE_{X_n}^\complement \cup \cE_{X_{n-2k}}^\complement, \|X\| \le C_0 \right) \\
	\le& \bP \left( \Num \le s C \left\| D \right\|_{HS}, \cE_{X_n}, \|X\| \le C_0 \right) + \exp(-cN) \\
	\le& C(\theta_0, r_H) \sqrt{n^{12/11}s + n^{-1/11}} + \exp(-cN).
\end{align*}
The proof of \eqref{eq-3.9-distance prob} for the case $l = N+n-k$ is finished when we choose $s = n^{-13/11}$. For the case $N+k+1 \le l < N+n-k$ the proof are similar.

\bigskip
The following Lemma has been used previously.
\begin{lemma} \label{Lemma-similar Coro 14}
For any deterministic vector $d \in \bC^N \setminus \{0\}$,
\begin{align*}
	\bP \left( \dfrac{D d}{\left\| D d \right\|} \in \Comp(\theta_0,r_H), \left\| \widetilde{X} \right\| \le C_0 \right)
	\le \exp (-cN),
\end{align*}
\end{lemma}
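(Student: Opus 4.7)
The plan is to adapt the incompressibility scheme of \cite[Lemma 14]{Bose2019} (in turn modeled on \cite{Cook2018}) to our block matrix $M := H_{[N+n-k-1],[N+n-k-1]}$. The argument consists of (i) a reduction to a net of compressible vectors, and (ii) a per-point exponential bound using the block identities satisfied by $D$. For (i), Lemma \ref{Lemma-Compressible} produces an $r_H$-net $\Sigma$ of $\Comp(\theta_0, r_H)$ with $|\Sigma| \le \binom{N}{\theta_0 N}(3/r_H)^{2\theta_0 N} \le \exp(C(r_H)\theta_0 N)$. A union bound then reduces the lemma to proving, for each fixed $v_0 \in \Sigma$,
\begin{align*}
	\bP\left(\left\|\dfrac{Dd}{\|Dd\|} - v_0\right\| \le 2r_H, \left\| \widetilde{X} \right\| \le C_0\right) \le \exp(-c_1 N),
\end{align*}
with $c_1 > C(r_H)\theta_0$ (which again forces $\theta_0$ to be sufficiently small, exactly as in Section~\ref{step1}).

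For the per-point bound, I would exploit the block identities $z D + Y^{(1)} F = I_N$ and $Y^{(2)} D + B F = 0$, which follow from $M M^{-1} = I_{N+n-k-1}$. The crucial structural fact is that the $(n-2k)$-th row of $B$ is identically zero (the columns of $B$ span only $\{e_i^{(n-k-1)} : i \ne n-2k\}$). Taking this row of the second identity yields the deterministic relation $X_{n-2k}^*(Dd) = 0$; combined with $\|Dd/\|Dd\| - v_0\| \le 2 r_H$ this forces $|X_{n-2k}^* v_0| \le 2 C_0 r_H$ on the event in question. A direct small-ball estimate (Lemma \ref{Lemma-small ball prob-upper bound}) for the scalar $X_{n-2k}^* v_0$ gives only polynomial decay in $n$, which is insufficient on its own.

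The main obstacle --- and the step that parallels the hardest part of \cite[Lemma 14]{Bose2019} --- is promoting this polynomial bound to the required exponential bound $\exp(-c_1 N)$. I would iterate the previous argument over a family of $\Omega(n)$ columns of $\widetilde{X}$ via a Sherman--Morrison / leave-one-out decoupling: for each column $X_j$ in a suitable index set, express $D = D^{(j)} + (\text{low-rank update in } X_j)$ and extract from the block identities an approximate affine constraint of the form $|X_j^* v_0 + a_j| \le \epsilon_j$, where $a_j$ and $\epsilon_j$ are measurable with respect to $\{X_i : i \ne j\}$. Conditioning sequentially produces conditionally independent small-ball events whose probabilities multiply to give $\exp(-c_1 N)$. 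Because $B$ is singular, the standard Schur-complement formula $D = (zI - Y^{(1)} B^{-1} Y^{(2)})^{-1}$ is unavailable, so the rank-one updates must be tracked at the level of the full block inverse of $M$ rather than of a Schur complement; this bookkeeping is the chief technical difficulty, and it is precisely where we follow the detailed computations of \cite[Lemma 14]{Bose2019}.
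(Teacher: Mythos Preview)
Your reduction step (i) via a net over $\Comp(\theta_0,r_H)$ matches the paper. The gap is in step (ii): the proposed Sherman--Morrison iteration is not substantiated, and as stated it does not work. After one leave-one-out update the object $D^{(j)}d$ still depends on all remaining columns of $\widetilde{X}$, so the constraints you hope to extract are not conditionally independent in any obvious filtration; the claim that ``conditioning sequentially produces conditionally independent small-ball events'' is precisely what would have to be proved, and no mechanism is given. The single deterministic identity $X_{n-2k}^*D=0$ you derive is correct but does not generalize to other indices $j$, since the remaining rows of $B$ are nonzero.

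The paper avoids iteration altogether by working with the \emph{forward} map $M:=H_{[N+n-k-1],[N+n-k-1]}$ rather than its inverse block $D$. The event $\{Dd/\|Dd\|\approx v\}$ is rewritten as: there exist $a\in\bC$ and $w\in\bC^{n-k-1}$ with $M\binom{v}{w}$ within $O(r_H)$ of the line $\Span\{\binom{d}{0}\}$. In block form this reads $\|zv+Y^{(1)}w+ad\|\le t$ and $\|Y^{(2)}v+Bw\|\le t$. Since $B^*B=I_{n-k-1}-e_ke_k^\intercal$, the second equation determines $B^*Bw\approx -B^*Y^{(2)}v$, and substituting gives
\[
\bigl\|zv+X_nw_k - Y^{(1)}B^*Y^{(2)}v+ad\bigr\|\le t(1+C_0).
\]
Now comes the key point you are missing: because $v$ is a \emph{net point} supported on an index set $I$ of size $\theta_0N$, the vector $u:=-B^*Y^{(2)}v$ depends only on the rows $X_{I,[n]}$, while $(Y^{(1)})_{I^\complement,[n-k-1]}$ depends only on $X_{I^\complement,[n]}$. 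These two blocks are \emph{independent}. After discretizing the scalar $w_k$ over a bounded grid (and handling $|w_k|$ large separately via Lemma~\ref{Lemma-concentration to space} applied to $X_n$), one conditions on $X_I$ and applies Lemma~\ref{Lemma-concentration to space} to the $(N-\theta_0N)$-dimensional vector $(Y^{(1)})_{I^\complement,[n-k-1]}\,u/\|u\|$, obtaining $\exp(-cN)$ directly. This one-shot exponential bound replaces your proposed iteration entirely; the compressibility of $v$ is used not just for the entropy count, but to manufacture the independence that drives the concentration.
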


\begin{proof}{(of Lemma \ref{Lemma-similar Coro 14})}
We first show that for deterministic vector $v \in \bS^{N-1}$ supported on a deterministic index set $I$ with $|I| = \theta_0 N$, for small $t$,
\begin{align} \label{eq-3.21-similar to proposition 12}
	\bP \left( \inf_{w \in \bC^{n-k-1}} \dist \left( H_{[N+n-k-1],[N+n-k-1]} \left(
	\begin{matrix}
	v \\ w
	\end{matrix}
	\right), \Span \left\{ \left(
	\begin{matrix}
	d \\ 0
	\end{matrix}
	\right) \right\} \right) \le t, \left\| \widetilde{X} \right\| \le C_0 \right)
	\le \exp (-cN).
\end{align}
Recalled the definition of $B$, we can see that $B^*B$ is a diagonal matrix, whose $k$-th diagonal entry is zero and other diagonal entries are $1$. Thus,
\begin{align} \label{eq-3.22-decomposition}
	Y^{(1)} w
	=& \sum_{i=1}^{n-k-1} \left( Y^{(1)} \right)_{[N], i} w_i \nonumber \\
	=& \left( Y^{(1)} \right)_{[N], k} w_k + \sum_{i \in [n-k-1] \setminus \{k\}} \left( Y^{(1)} \right)_{[N], i} w_i \nonumber \\
	=& X_n w_k + Y^{(1)} B^*B w.
\end{align}
Recalled the blocking of $H_{[N+n-k-1],[N+n-k-1]}$, we have
\begin{align} \label{eq-3.23-linear equation}
	& \bP \left( \inf_{w \in \bC^{n-k-1}} \dist \left( H_{[N+n-k-1],[N+n-k-1]} \left(
	\begin{matrix}
	v \\ w
	\end{matrix}
	\right), \Span \left\{ \left(
	\begin{matrix}
	d \\ 0
	\end{matrix}
	\right) \right\} \right) \le t, \left\| \widetilde{X} \right\| \le C_0 \right) \nonumber \\
	\le& \bP \left( \exists w \in \bC^{n-k-1}, \exists a \in \bC: \left\| z v + Y^{(1)} w + ad \right\| \le t, \left\| Y^{(2)} v + Bw \right\| \le t, \left\| \widetilde{X} \right\| \le C_0 \right) \nonumber \\
	=& \bP \left( \exists w \in \bC^{n-k-1}, \exists a \in \bC: \left\| z v + X_n w_k + Y^{(1)} B^*B w + ad \right\| \le t, \left\| Y^{(2)} v + Bw \right\| \le t, \left\| \widetilde{X} \right\| \le C_0 \right) \nonumber \\
	\le& \bP \left( \exists w_k, a \in \bC: \left\| z v + X_n w_k - Y^{(1)} B^* Y^{(2)}v + ad \right\| \le t (1+C_0), \left\| \widetilde{X} \right\| \le C_0 \right).
\end{align}

For a large enough constant $C'(t)$ that depends on $t$, by Lemma \ref{Lemma-concentration to space}, we have
\begin{align} \label{eq-3.24-wk large}
	& \bP \left( \exists w_k, a \in \bC: \left\| z v + X_n w_k - Y^{(1)} B^* Y^{(2)}v + ad \right\| \le t (1+C_0), |w_k| \ge C'(t), \left\| \widetilde{X} \right\| \le C_0 \right) \nonumber \\
	\le& \bP \left( \exists w_k, a \in \bC: \left\| z v + X_n w_k + ad \right\| \le C_0^2 + t (1+C_0), |w_k| \ge C'(t), \left\| \widetilde{X} \right\| \le C_0 \right) \nonumber \\
	\le& \bP \left( \exists w_k, a \in \bC: \left\| \dfrac{z}{w_k} v + X_n + \dfrac{a}{w_k} d \right\| \le \dfrac{C_0^2 + t (1+C_0)}{C'(t)}, |w_k| \ge C'(t), \left\| \widetilde{X} \right\| \le C_0 \right) \nonumber \\
	\le& \bP \left( \dist \left( X_n, \Span \left\{ v,d \right\} \right) \le \dfrac{C_0^2 + t (1+C_0)}{C'(t)} \right) \nonumber \\
	\le& \exp (-cN).
\end{align}
We divide the interval $[-C'(t), C'(t)]$ by $-C'(t) = a_1 < \ldots < a_{M(t)} = C'(t)$, such that $|a_{i+1} - a_i| \le 3C'(t)/M(t)$, where $M(t)$ is a large enough constant. Then we have
\begin{align} \label{eq-3.25-wk mall}
	& \bP \left( \exists w_k, a \in \bC: \left\| z v + X_n w_k - Y^{(1)} B^* Y^{(2)}v + ad \right\| \le t (1+C_0), |w_k| < C'(t), \left\| \widetilde{X} \right\| \le C_0 \right) \nonumber \\
	\le& \sum_{i=1}^{M(t)-1} \bP \left( \exists w_k, a \in \bC: \left\| z v + X_n w_k - Y^{(1)} B^* Y^{(2)}v + ad \right\| \le t (1+C_0), w_k \in [a_i,a_{i+1}], \left\| \widetilde{X} \right\| \le C_0 \right) \nonumber \\
	\le& \sum_{i=1}^{M(t)-1} \bP \left( \exists a \in \bC: \left\| z v + X_n a_i - Y^{(1)} B^* Y^{(2)}v + ad \right\| \le t (1+C_0) + \dfrac{3C_0C'(t)}{M(t)}, \left\| \widetilde{X} \right\| \le C_0 \right) \nonumber \\
	\le& \sum_{i=1}^{M(t)-1} \bP \left( \exists a \in \bC: \left\| z v + Y^{(1)} \left( a_i e_k -B^* Y^{(2)}v \right) + ad \right\| \le t (1+C_0) + \dfrac{3C_0C'(t)}{M(t)} \right).
\end{align}
We denote $u_i = a_i e_k -B^* Y^{(2)}v$ then we have
\begin{align*}
	u_i =& a_i e_k - \left(
	\begin{matrix}
	0_{(k-1) \times (n-2k-1)} & 0_{(k-1) \times 1} & I_{k-1} \\
	0_{1 \times (n-2k-1)} & 0 & 0_{1 \times (k-1)} \\
	I_{n-2k-1} & 0_{(n-2k-1)\times 1} & 0_{(n-2k-1) \times (k-1)}
	\end{matrix}
	\right) \left(
	\begin{matrix}
	X_1^* \\
	\vdots \\
	X_{n-k-1}^*
	\end{matrix}
	\right) v \\
	=& \left(
	\begin{matrix}
	-X_{n-2k+1}^*v \\
	\vdots \\
	-X_{n-k-1}^*v \\
	a_i \\
	-X_1^*v \\
	\vdots \\
	-X_{n-2k-1}^*v \\
	\end{matrix}
	\right)
\end{align*}
Note that $v$ is supported on $I$, we can see that $u_i$ is independent of $X_{I^\complement, [n]}$, which implies that $u_i$ is independent of $Y^{(1)}_{I^\complement, [n-k-1]}$. Moreover, by Lemma \ref{Lemma-concentration to space}, we have
\begin{align} \label{eq-3.26-u norm}
	\bP \left( \|u_i\|^2 < c \right)
	\le& \bP \left( \sum_{i \in [n-k-1] \setminus \{n-2k\}} \left( X_i^* v \right)^2 < c \right) \nonumber \\
	=& \bP \left( \left\| \left( X_1, \ldots, X_{n-2k-1}, X_{n-2k+1}, \ldots, X_{n-k-1} \right)^* v \right\|^2 < c \right) \nonumber \\
	\le& \exp \left( -cN \right).
\end{align}
Thus, by \eqref{eq-3.25-wk mall}, \eqref{eq-3.26-u norm} and Lemma \ref{Lemma-concentration to space},
\begin{align} \label{eq-3.27-wk small}
	& \bP \left( \exists w_k, a \in \bC: \left\| z v + X_n w_k - Y^{(1)} B^* Y^{(2)}v + ad \right\| \le t (1+C_0), |w_k| < C'(t), \left\| \widetilde{X} \right\| \le C_0 \right) \nonumber \\
	\le& \sum_{i=1}^{M(t)-1} \bP \left( \exists a \in \bC: \left\| z v + Y^{(1)} u_i + ad \right\| \le t (1+C_0) + \dfrac{3C_0C'(t)}{M(t)} \right) \nonumber \\
	\le& \sum_{i=1}^{M(t)-1} \bP \left( \exists a \in \bC: \left\| z v_{I^\complement} + \left( Y^{(1)} \right)_{I^\complement, [n-k-1]} u_i + ad_{I^\complement} \right\| \le t (1+C_0) + \dfrac{3C_0C'(t)}{M(t)} \right) \nonumber \\
	\le& \sum_{i=1}^{M(t)-1} \bP \left( \exists a \in \bC: \left\| z v_{I^\complement} + \left( Y^{(1)} \right)_{I^\complement, [n-k-1]} u_i + ad_{I^\complement} \right\| \le t (1+C_0) + \dfrac{3C_0C'(t)}{M(t)}, \left\| u_i \right\| \ge c \right) \nonumber \\
	& + M(t) \exp (-cN) \nonumber \\
	\le& \sum_{i=1}^{M(t)-1} \bP \left( \exists a \in \bC: \left\| \dfrac{z}{\|u_i\|} v_{I^\complement} + \left( Y^{(1)} \right)_{I^\complement, [n-k-1]} \dfrac{u_i}{\|u_i\|} + \dfrac{a}{\|u_i\|} d_{I^\complement} \right\| \le Ct + \dfrac{CC'(t)}{M(t)} \right) \nonumber \\
	& + M(t) \exp (-cN) \nonumber \\
	\le& \sum_{i=1}^{M(t)-1} \bP \left( \dist \left( \left( Y^{(1)} \right)_{I^\complement, [n-k-1]} \dfrac{u_i}{\|u_i\|}, \Span \left\{ v_{I^\complement}, d_{I^\complement} \right\} \right) \le Ct + \dfrac{CC'(t)}{M(t)} \right) \nonumber \\
	& + M(t) \exp (-cN) \nonumber \\
	\le& 2M(t) \exp (-cN),
\end{align}
where we need to choose $t$ small, $M(t)$ large. Then \eqref{eq-3.21-similar to proposition 12} follows from \eqref{eq-3.23-linear equation}, \eqref{eq-3.24-wk large} and \eqref{eq-3.27-wk small}.

Similarly, by \eqref{eq-3.22-decomposition} and Lemma \ref{Lemma-concentration to space}, we have
\begin{align} \label{eq-3.29-v=0}
	& \bP \left( \exists w \in \bC^{n-k-1}, \exists r' \ge 0: H_{[N+n-k-1],[N+n-k-1]} \left(
	\begin{matrix}
	0 \\ w
	\end{matrix}
	\right) = \left(
	\begin{matrix}
	r' d \\ 0
	\end{matrix}
	\right), \left\| \widetilde{X} \right\| \le C_0 \right) \nonumber \\
	=& \bP \left( \exists w \in \bC^{n-k-1}, \exists r' \ge 0: Y^{(1)} w = r' d, Bw = 0, \left\| \widetilde{X} \right\| \le C_0 \right) \nonumber \\
	=& \bP \left( \exists w_k \in \bC, \exists r' \ge 0: X_n w_k = r' d, Bw = 0, \left\| \widetilde{X} \right\| \le C_0 \right) \nonumber \\
	=& \bP \left( \exists w_k \in \bC \setminus \{0\}, \exists r' \ge 0: X_n = \dfrac{r'}{w_k}d \right) \nonumber \\
	=& \bP \left( X_n \in \Span \left\{ d \right\} \right) \nonumber \\
	\le& \exp (-cN).
\end{align}

By Lemma \ref{Lemma-Compressible}, the set of compressible vectors $\Comp(\theta_0, r_H)$ lies in a $r_H$-neighbourhood of $\bS_I^{N-1}$ for some $I \subseteq [N]$ with $|I| = \theta_0 N$, and the set $\bS_I^{N-1}$ has a $r_H$-net of cardinal number bounded by $(3/r_H)^{2 \theta_0 N}$. Moreover, by \eqref{eq-3.9'-||H||} and Lemma \ref{Lemma-singular value interacting}, we have $\left\| H_{[N+n-k-1],[N+n-k-1]} \right\| \le \|H\| \le |z|+1+C_0$. Thus, by \eqref{eq-3.21-similar to proposition 12} and \eqref{eq-3.29-v=0},
\begin{align*}
	& \bP \left( \dfrac{D d}{\left\| D d \right\|} \in \Comp(\theta_0,r_H), \left\| \widetilde{X} \right\| \le C_0 \right) \\
	=& \bP \left( \exists v \in \Comp(\theta_0,r_H), r \ge 0, w \in \bC^{n-k-1}: H_{[N+n-k-1],[N+n-k-1]} \left(
	\begin{matrix}
	rv \\ w
	\end{matrix}
	\right) = \left(
	\begin{matrix}
	d \\ 0
	\end{matrix}
	\right), \left\| \widetilde{X} \right\| \le C_0 \right) \\
	=& \bP \left( \exists w \in \bC^{n-k-1}: H_{[N+n-k-1],[N+n-k-1]} \left(
	\begin{matrix}
	0 \\ w
	\end{matrix}
	\right) = \left(
	\begin{matrix}
	d \\ 0
	\end{matrix}
	\right), \left\| \widetilde{X} \right\| \le C_0 \right) \\
	&+ \bP \left( \exists v \in \Comp(\theta_0,r_H), r>0, w \in \bC^{n-k-1}: H_{[N+n-k-1],[N+n-k-1]} \left(
	\begin{matrix}
	v \\ w
	\end{matrix}
	\right) = \left(
	\begin{matrix}
	d/r \\ 0
	\end{matrix}
	\right), \left\| \widetilde{X} \right\| \le C_0 \right) \\
	\le& \exp (-cN) + \binom{N}{\theta_0 N} \left( \dfrac{3}{r_H} \right)^{2\theta_0 N} \nonumber \\
	& \times \bP \left( \exists v \in B(v_0,2r_H), r>0, w \in \bC^{n-k-1}: H_{[N+n-k-1],[N+n-k-1]} \left(
	\begin{matrix}
	v \\ w
	\end{matrix}
	\right) = \left(
	\begin{matrix}
	d/r \\ 0
	\end{matrix}
	\right), \left\| \widetilde{X} \right\| \le C_0 \right) \\
	\le& \exp (-cN) + \binom{N}{\theta_0 N} \left( \dfrac{3}{r_H} \right)^{2\theta_0 N} \nonumber \\
	&\times \bP \left( \exists w \in \bC^{n-k-1}: \dist \left( H_{[N+n-k-1],[N+n-k-1]} \left(
	\begin{matrix}
	v_0 \\ w
	\end{matrix}
	\right), \Span \left\{ \left(
	\begin{matrix}
	d \\ 0
	\end{matrix}
	\right) \right\} \right) \le 2r_H(|z|+1+C_0), \left\| \widetilde{X} \right\| \le C_0 \right) \\
	\le& \exp (-cN) + \binom{N}{\theta_0 N} \left( \dfrac{3}{r_H} \right)^{2\theta_0 N} \exp (-cN) \\
	\le& \exp (-cN).
\end{align*}
In the last inequality, we just use the Stirling formula as what we do at the end of Step 1. We may replace the $\theta_0$ by a smaller one if necessary. Hence, the proof of Lemma \ref{Lemma-similar Coro 14} is finished.
\end{proof}

\begin{remark} \label{Remark--similar Coro 14}
By a similar argument, one can show that
\begin{align*}
	\bP \left( \dfrac{d^* D}{\left\| d^* D \right\|} \in \Comp(\theta_0,r_H), \left\| \widetilde{X} \right\| \le C_0 \right) \le \exp (-cN).
\end{align*}
\end{remark}

\subsubsection{Case of \texorpdfstring{$N+1 \le l \le N+k$}{N+1 <= l <= N+k}}\label{Case2}
The estimation is similar to the previous case of $N+k+1\le l\le N+n-k$ and the proof is sketched as follows.
Without loss of generality, we only estimate \eqref{eq-3.10-distance} for the case $l = N+1$. First of all, we have
\begin{align*}
	&H_{N+1,N+1} = 0, \
	H_{N+1,[N+n-k] \setminus \{N+1\}} = \left( X_1^*, \left( e_k^{(n-k-1)} \right)^\intercal \right), \nonumber \\
	&H_{[N+n-k] \setminus \{N+1\},N+1} = \left(
	\begin{matrix}
		X_{n-k+1} \\
		e_{n-2k}^{(n-k+1)}
	\end{matrix}
	\right), \nonumber \\
	&H_{[N+n-k] \setminus \{N+1\},[N+n-k] \setminus \{N+1\}} \nonumber \\
	=& \left(
	\begin{matrix}
		zI_N & \left( X_{n-k+2}, \ldots, X_n, X_{k+1}, \ldots, X_{n-k} \right) \\
		\left( X_2, \ldots, X_{n-k} \right)^* & \left( e_{n-2k+1}^{(n-k-1)}, \ldots, e_{n-k-1}^{(n-k-1)}, 0, e_1^{(n-k-1)}, \ldots, e_{n-2k-1}^{(n-k-1)} \right)
	\end{matrix}
	\right).
\end{align*}

By showing that the determinant of $H_{[N+n-k] \setminus \{N+1\}, [N+n-k] \setminus \{N+1\}}$ is a non-zero polynomial of the entries of $X$, one can deduce the invertibility of $H_{[N+n-k] \setminus \{N+1\}, [N+n-k] \setminus \{N+1\}}$. Next, we denote
\begin{align*}
	H_{[N+n-k] \setminus \{N+1\}, [N+n-k] \setminus \{N+1\}}^{-1}
	= \left(
	\begin{matrix}
		D & E \\
		F & G
	\end{matrix}
	\right),
\end{align*}
where $D \in \bC^{N \times N}$ and $G \in \bC^{(n-k-1) \times (n-k-1)}$. Denote a random vector $Y = (X_1^*, X_{n-k+1}^*)^*$, then by \eqref{eq-3.11-Def Num},
\begin{align*}
	\Num =& \left| X_1^* D X_{n-k+1} + X_1^* E e_{n-2k}^{(n-k+1)} + \left( e_k^{(n-k-1)} \right)^\intercal F X_{n-k+1} + \left( e_k^{(n-k-1)} \right)^\intercal G e_{n-2k}^{(n-k+1)} \right| \\
	=& \left| Y^* \left(
	\begin{matrix}
		0_{N \times N} & D \\
		0_{N \times N} & 0_{N \times N}
	\end{matrix}
	\right) Y + Y^* \left(
	\begin{matrix}
		E e_{n-2k}^{(n-k+1)} \\
		0_{N \times 1}
	\end{matrix}
	\right) + \left(0_{1 \times N}, \left( e_k^{(n-k-1)} \right)^\intercal F \right) Y + \left( e_k^{(n-k-1)} \right)^\intercal G e_{n-2k}^{(n-k+1)} \right| \\
	=& \left| Y^* \widetilde{D} Y + Y^* \widetilde{b}^{(1)} + \left(\widetilde{b}^{(2)} \right)^* Y + \widetilde{a} \right|,
\end{align*}
where
\begin{align*}
	& \widetilde{D} = \left(
	\begin{matrix}
		0_{N \times N} & D \\
		0_{N \times N} & 0_{N \times N}
	\end{matrix}
	\right), \
	\widetilde{b}^{(1)} = \left(
	\begin{matrix}
		E e_{n-2k}^{(n-k+1)} \\
		0_{N \times 1}
	\end{matrix}
	\right), \\
	& \left(\widetilde{b}^{(2)} \right)^* = \left(0_{1 \times N}, \left( e_k^{(n-k-1)} \right)^\intercal F \right), \
	\widetilde{a} = \left( e_k^{(n-k-1)} \right)^\intercal G e_{n-2k}^{(n-k+1)}.
\end{align*}
Similar to Step (a) in Section \ref{Case1}, we introduce an independent family $\xi = \{\xi_1, \ldots, \xi_N\}$ of Bernoulli random variables and denote $I = \left\{ i \in [N]: \xi_i = 1 \right\}$, $\widetilde{I} = I \cup (I+N) = I \cup \left\{ i \in [2N] \setminus [N] : i-N \in I \right\}$.
Choose independent random vectors $\widetilde{x}, \widetilde{x}', \widetilde{x}'' \overset{d}{=} Y$ and set $\widetilde{u} = \left( \widetilde{x} \right)_{\widetilde{I}}$, $\widetilde{v} = \left( \widetilde{x}' \right)_{\widetilde{I}^\complement}$ and $\widetilde{w} = \left( \widetilde{x}'' \right)_{\widetilde{I}^\complement}$.
Denote $\widetilde{X} = (X_2, \ldots, X_{n-k}, X_{n-k+2}, \ldots, X_n)$. Then by Cauchy-Schwarz inequality and Lemma \ref{Lemma-quadratic form bound},
\begin{align*}
	& \bP \left( \Num \le t, \|X\| \le C_0 \right)^2 \\
	\le& \bP \left( \left| Y^* \widetilde{D} Y + Y^* \widetilde{b}^{(1)} + \left(\widetilde{b}^{(2)} \right)^* Y + \widetilde{a} \right| \le t, \left\| \widetilde{X} \right\| \le C_0 \right)^2 \\
	=& \left( \bE_{\widetilde{X}} \left[ \bE_{Y} \left[ 1_{\left| Y^* \widetilde{D} Y + Y^* \widetilde{b}^{(1)} + \left(\widetilde{b}^{(2)} \right)^* Y + \widetilde{a} \right| \le t} \right] 1_{\left\| \widetilde{X} \right\| \le C_0} \right] \right)^2 \\
	\le& \bE_{\widetilde{X}} \left[ \left( \bE_{Y} \left[ 1_{\left| Y^* \widetilde{D} Y + Y^* \widetilde{b}^{(1)} + \left(\widetilde{b}^{(2)} \right)^* Y + \widetilde{a} \right| \le t} \right] \right)^2 1_{\left\| \widetilde{X} \right\| \le C_0} \right] \\
	\le& \bE_{\widetilde{X}} \left[ \bE_{\widetilde{v}, \widetilde{w}} \left[ \cS_{\widetilde{u}} \left( (\widetilde{v} - \widetilde{w})^* \widetilde{D}_{\widetilde{I}^\complement, \widetilde{I}} \widetilde{u} + \widetilde{u}^* \widetilde{D}_{\widetilde{I}, \widetilde{I}^\complement} (\widetilde{v} - \widetilde{w}), 2t \right) \right] 1_{\left\| \widetilde{X} \right\| \le C_0} \right] \\
	=& \bE_{\widetilde{X}} \left[ \bE_{\widetilde{v}, \widetilde{w}} \left[ \cS_{\widetilde{u}} \left( (\Pi_{\widetilde{I}^\complement} (\widetilde{x}' - \widetilde{x}''))^* \widetilde{D} \left( \Pi_{\widetilde{I}} \widetilde{x} \right) + (\Pi_{\widetilde{I}} \widetilde{x})^* \widetilde{D} \left( \Pi_{\widetilde{I}^\complement}(\widetilde{x}' - \widetilde{x}'') \right), 2t \right) 1_{\|\widetilde{X}\| \le C_0} \right] \right].
\end{align*}
One can show that Lemma \ref{Lemma-similar Coro 14} still holds for $D$.
We can define $y, \tilde{y}, \alpha, \tilde{\alpha}, W_i$ as we do in Section \ref{Case1}, where $D$, $I$, $x'$ and $x''$ should be replaced by $\widetilde{D}$, $\widetilde{I}$, $\widetilde{x}'$ and $\widetilde{x}''$, respectively.
Let $\widetilde{J}$ be the index set given by \eqref{eq-def-J} with $N$ replaced by $2N$. Then we can obtain the corresponding upper bound \eqref{eq-3.16-upper bound of cS} with $I$, $J$ and $D$ replaced by $\widetilde{I}$, $\widetilde{J}$ and $\widetilde{D}$, respectively.
If $I^\complement \not= \emptyset$, then $\Pi_{I^\complement+N}(\widetilde{x}' - \widetilde{x}'') = 0$ with probability zero since the entries of $Y$ have continuous density. 
Then by the Lemma \ref{Lemma-similar Coro 14}, one can still obtain that the probability of $y \in \Comp(\theta_0, r_H)$ is at most $\exp(-cN)$. Besides,
\begin{align*}
	\bP \left( I^\complement = \emptyset \right) = p^N = \exp(-cN).
\end{align*}
Thus, the probability of $y \in \Incomp(\theta_0,r_H)$ is at least $1 - \exp(-cN)$.
On the event $\{ y \in \Incomp(\theta_0,r_H) \} \cap \cE_I$, we still have $|\widetilde{I} \cap \widetilde{J}| \ge \theta_0 N/3$ for a small constant c that only depends on $\theta_0$. Then the computation of \eqref{eq-3.29-prob cE alpha} is still valid with $D$ replaced by $\widetilde{D}$, the unit vector $e_i^{(N)}$ replaced by $e_i^{(2N)}$ and the range of the index $j$ should be $[2N]$. Thus, one may derive \eqref{eq-3.30-Prob on Num}.

The computation of $\Den$ given by \eqref{eq-3.12-Def Den} as well as the estimation
\begin{align*}
	\bP \left( \dfrac{\Num}{\Den} \le s, \left\| X \right\| \le C_0 \right)
	\le C(\theta_0, r_H) \sqrt{n^{12/11}s + n^{-1/11}} + \exp(-cN)
\end{align*}
can be deduced step by step as in Section \ref{Case1}. By choosing $s = n^{-13/11}$, we obtain \eqref{eq-3.9-distance prob} for the case $l = N+1$.

\subsubsection{Case of \texorpdfstring{$1 \le l \le N$}{1 <= l <= N}}\label{Case3}
The estimation is similar to the previous case of $N+k+1\le l\le N+n-k$ and the proof is sketched as follows.
Without loss of generality, we only estimate \eqref{eq-3.10-distance} for the case $l = N$. First of all, we have
\begin{align*}
	&H_{N,N} = z, \
	H_{N,[N+n-k] \setminus \{N\}} = \left( 0_{1 \times (N-1)}, \left( X_{N,[n] \setminus [n-k]}, X_{N, [n-k] \setminus [k]} \right) \right), \nonumber \\
	& H_{[N+n-k] \setminus \{N\},N} = \left(
	\begin{matrix}
	0_{(N-1) \times 1} \\
	\left( X_{N, [n-k]} \right)^*
	\end{matrix}
	\right), \nonumber \\
	& H_{[N+n-k] \setminus \{N\},[N+n-k] \setminus \{N\}} = \left(
	\begin{matrix}
	zI_{N-1} & \left( X_{[N-1], [n] \setminus [n-k]}, X_{[N-1], [n-k] \setminus [k]} \right) \\
	\left( X_{[N-1],[n-k]} \right)^* & \left( e_{n-2k+1}^{(n-k)}, \ldots, e_{n-k}^{(n-k)}, e_1^{(n-k)}, \ldots, e_{n-2k}^{(n-k)} \right)
	\end{matrix}
	\right).
\end{align*}

By showing that the determinant of $H_{[N+n-k] \setminus \{N\}, [N+n-k] \setminus \{N\}}$ is a non-zero polynomial of the entries of $X$, one can deduce the invertibility of $H_{[N+n-k] \setminus \{N\}, [N+n-k] \setminus \{N\}}$. Next, we denote
\begin{align*}
	H_{[N+n-k] \setminus \{N\}, [N+n-k] \setminus \{N\}}^{-1}
	= \left(
	\begin{matrix}
	D & E \\
	F & G
	\end{matrix}
	\right),
\end{align*}
where $D \in \bC^{(N-1) \times (N-1)}$ and $G \in \bC^{(n-k) \times (n-k)}$. Denote a row random vector
\begin{align*}
	Y = \left( Y_1, Y_2, Y_3 \right) = \left( X_{N,[k]}, X_{N,[n] \setminus [n-k]}, X_{N,[n-k] \setminus [k]} \right),
\end{align*}
then by \eqref{eq-3.11-Def Num},
\begin{align*}
	\Num =& \left| z - \left( 0_{1 \times (N-1)}, Y_2, Y_3 \right) \left(
	\begin{matrix}
	D & E \\
	F & G
	\end{matrix}
	\right) \left(0_{1 \times (N-1)}, Y_1, Y_3 \right)^* \right| \\
	=& \left| z - \left( Y_2, Y_3 \right) G \left( Y_1, Y_3 \right)^* \right| \\
	=& \left| z - Y \left(
	\begin{matrix}
	0_{k \times k} & 0_{k \times k} & 0_{k \times (n-2k)} \\
	G_{[k], [k]} & 0_{k \times k} & G_{[k], [n-2k]} \\
	G_{[n-2k], [k]} & 0_{(n-2k) \times k} & G_{[n-2k], [n-2k]}
	\end{matrix}
	\right) Y^* \right|.
\end{align*}
We denote the matrix above as $\widetilde{G}$ then $\Num = \left| z - Y\widetilde{G} Y^* \right|$. Similar to Step (a) in Section \ref{Case1}, we introduce an independent family $\xi = \{\xi_1, \ldots, \xi_n\}$ of Bernoulli random variables and denote $I = \left\{ i \in [n]: \xi_i = 1 \right\}$. Choose independent random vectors $x,x',x'' \overset{d}{=} Y$ and set $u = (x)_I$, $v = (x')_{I^\complement}$ and $w = (x'')_{I^\complement}$. Denote $\widetilde{X} = X_{[N-1], [n]}$. Then by Cauchy-Schwarz inequality and Lemma \ref{Lemma-quadratic form bound},
\begin{align*}
	& \bP \left( \Num \le t, \|X\| \le C_0 \right)^2 \\
	\le& \bP \left( \left| z - Y\widetilde{G} Y^* \right| \le t, \left\| \widetilde{X} \right\| \le C_0 \right)^2 \\
	=& \left( \bE_{\widetilde{X}} \left[ \bE_{X_{N,[n]}} \left[ 1_{\left| z - Y\widetilde{G} Y^* \right| \le t} \right] 1_{\left\| \widetilde{X} \right\| \le C_0} \right] \right)^2 \\
	\le& \bE_{\widetilde{X}} \left[ \left( \bE_{Y} \left[ 1_{\left| z - Y \widetilde{G} Y^* \right| \le t} \right] \right)^2 1_{\left\| \widetilde{X} \right\| \le C_0} \right] \\
	\le& \bE_{\widetilde{X}} \left[ \bE_{v,w} \left[ \cS_u \left( (v-w)^* \widetilde{G}_{I^\complement, I} u + u^* \widetilde{G}_{I, I^\complement} (v-w), 2t \right) \right] 1_{\left\| \widetilde{X} \right\| \le C_0} \right] \\
	=& \bE_{\widetilde{X}} \left[ \bE_{v,w} \left[ \cS_u \left( (\Pi_{I^\complement}(x'-x''))^* \widetilde{G} \left( \Pi_Ix \right) + (\Pi_Ix)^* \widetilde{G} \left( \Pi_{I^\complement}(x'-x'') \right), 2t \right) 1_{\|\widetilde{X}\| \le C_0} \right] \right].
\end{align*}

Then we can define $y, \tilde{y}, \alpha, \tilde{\alpha}, W_i$ as we do in Section \ref{Case1}, where $D$ should be replaced by $\widetilde{G}$.
One can compute the corresponding upper bound \eqref{eq-3.16-upper bound of cS}, where $J$ should be given in \eqref{eq-def-J} with $N$ replaced by $n$.
Besides, one can also show that Lemma \ref{Lemma-similar Coro 14} holds with $D$ replaced by $G$. For $d = (d_1^*, d_2^*, d_3^*)^* \in \bC^n$, note that
\begin{align*}
	\widetilde{G}d = \left(
	\begin{matrix}
	0_{k \times 1} \\
	G \left(
	\begin{matrix}
	d_1 \\ d_3
	\end{matrix}
	\right)
	\end{matrix}
	\right),
\end{align*}
we can see that $\widetilde{G} d \in \Comp(\theta_0, r_H)$ if and only if $G \left( d_1^*, d_3^* \right)^* \in \Comp(\theta_0, r_H)$.
Thus, Lemma \ref{Lemma-similar Coro 14} holds with $D$ replaced by $\widetilde{G}$, if $\Pi_{[k] \cup ([n] \setminus [2k])}d \not= 0$.

On the event $\left\{ I^\complement \nsubseteq [2k] \setminus [k] \right\}$, $\Pi_{[k] \cup ([n] \setminus [2k])} (x'-x'') = 0$ with probability zero since the entries of $Y$ have continuous density. Then by the Lemma \ref{Lemma-similar Coro 14}, one can still obtain that the probability of $y \in \Comp(\theta_0, r_H)$ is at most $\exp(-cN)$. Besides,
\begin{align*}
	\bP \left( I^\complement \subseteq [2k] \setminus [k] \right)
	= \bP \left( \xi_i = 1, \forall i \in [k] \cup ([n] \setminus [2k]) \right)
	= p^{n-k}
	= \exp(-cN).
\end{align*}
Thus, the probability of $y \in \Incomp(\theta_0,r_H)$ is at least $1 - \exp(-cN)$.
Similarly, one can show that the probability of the event $\cE_I' = \{|I| > n(1-\theta_0/3)\}$ is at least $1 - \exp(-cN)$.
On the event $\{y \in \Incomp(\theta_0,r_H)\} \cap \cE_I'$, we still have $|I \cap J| \ge \theta_0 n/6$.

Note that
\begin{align*}
	\left( e_i^{(n)} \right)^* \widetilde{G} =
	\begin{cases}
	0, & i \in [2k] \setminus [k], \\
	\left( e_i^{(n-k)} \right)^* G, & i \in [k] \cup ([n] \setminus [2k]).
	\end{cases}
\end{align*}
So
\begin{align*}
	\left\| \widetilde{G} \left( \Pi_{I^\complement}(x'-x'') \right) \right\|^2
	= \sum_{i \in [k] \cup ([n] \setminus [2k])} \left( \left( e_i^{(n)} \right)^* \widetilde{G} \left( \Pi_{I^\complement}(x'-x'') \right) \right)^2.
\end{align*}
Then the computation of \eqref{eq-3.29-prob cE alpha} is still valid with $D$ replaced by $\widetilde{G}$ and the index $i$ should be in the index set $[k] \cup ([n] \setminus [2k])$. Thus, one may derive \eqref{eq-3.30-Prob on Num} with $D$ replaced by $G$.

Next, we compute $\Den$ given by \eqref{eq-3.12-Def Den}. Recalled the definition of $H$ in \eqref{eq-3.13-entries of H} and the blocking of $\left( H_{[N+n-k] \setminus [N],[N+n-k] \setminus [N]} \right)^{-1}$, we have the following identity
\begin{align*}
	I_{N+n-k-1} =& \left(
	\begin{matrix}
	zI_{N-1} & H_{[N-1], [N+n-k] \setminus [N]} \\
	H_{[N+n-k] \setminus [N], [N-1]} & H_{[N+n-k] \setminus [N], [N+n-k] \setminus [N]}
	\end{matrix}
	\right) \left(
	\begin{matrix}
	D & E \\
	F & G
	\end{matrix}
	\right) \\
	=& \left(
	\begin{matrix}
	D & E \\
	F & G
	\end{matrix}
	\right) \left(
	\begin{matrix}
	zI_{N-1} & H_{[N-1], [N+n-k] \setminus [N]} \\
	H_{[N+n-k] \setminus [N], [N-1]} & H_{[N+n-k] \setminus [N], [N+n-k] \setminus [N]}
	\end{matrix}
	\right).
\end{align*}
Note that the submatrix $H_{[N+n-k] \setminus [N], [N+n-k] \setminus [N]}$ is a permutation matrix, $H_{[N-1], [N+n-k] \setminus [N]}$ is a submatrix of $X$ up to a permutation, $H_{[N+n-k] \setminus [N], [N-1]}$ is a submatrix of $X^*$. Thus, on the event $\{ \|X\| \le C_0 \}$, we have
\begin{align*}
	\left\| F \right\|
	= \dfrac{1}{|z|} \left\| F zI_{N-1} \right\|
	= \dfrac{1}{|z|} \left\| G H_{[N+n-k] \setminus [N], [N-1]} \right\|
	\le \dfrac{C_0}{|z|}  \|G\|.
\end{align*}
Moreover, on the event $\{ \|X\| \le C_0 \}$, suppose that $\|G\| \le c'$ for a small constant $c'$ that may depend on $|z|$ and will be determined later, then $\|F\| \le C_0c'/|z|$. Thus,
\begin{align*}
	1 = \left\| I_{n-k} \right\| 
	= \left\| F H_{[N-1], [N+n-k] \setminus [N]} + G H_{[N+n-k] \setminus [N], [N+n-k] \setminus [N]} \right\|
	\le C_0 \|F\| + \|G\|
	\le \dfrac{C_0^2c'}{|z|} + c'.
\end{align*}
We may choose $c' < |z|/\left( |z| + C_0^2 \right)$ to reach a contradiction. Thus, $\|G\| > c'$ on the event $\{ \|X\| \le C_0 \}$. Therefore, on the event $\{ \|X\| \le C_0 \}$,
\begin{align*}
	\Den^2 =& 1 + \left\| \left( X_{N,[n] \setminus [n-k]}, X_{N, [n-k] \setminus [k]} \right) F \right\|^2 + \left\| \left( X_{N,[n] \setminus [n-k]}, X_{N, [n-k] \setminus [k]} \right) G \right\| \\
	\le& 1 + C_0^2 \|F\|^2 + C_0^2 \|G\|^2 \\
	\le& C\|G\|^2.
\end{align*}
Thus, \eqref{eq-3.9-distance prob} follows from the estimations on $\Num$ and $\Den$.

\subsection{Estimate \texorpdfstring{\eqref{eq-3.5-Incompressible}}{(3.5)} for imcompressible vectors for the case \texorpdfstring{$2k+1 > n$}{2k+1 > n}}\label{step3}

We now establish estimation \eqref{eq-3.5-Incompressible} for the case $2k+1 > n$ with $\theta = \theta_0$ and $\rho = r_H$. The proof is similar to that in Section \ref{step2} and is sketched as follows.

It is enough to prove \eqref{eq-3.9-distance prob}. Moreover, \eqref{eq-3.10-distance}, \eqref{eq-3.11-Def Num} and \eqref{eq-3.12-Def Den} are still valid.

\subsubsection{Case of \texorpdfstring{$N+1 \le l \le N+n-k$}{N+1 <= l <= N+n-k}}\label{Case4}

Without loss of generality, we only estimate \eqref{eq-3.10-distance} for the case $l = N+n-k$. Recalled the definition of $H$, we have
\begin{align*}
	&H_{N+n-k,N+n-k} = 1, \
	H_{N+n-k,[N+n-k-1]} = \left( X_{n-k}^*, 0_{1 \times (n-k-1)} \right), \\
	& H_{[N+n-k-1],N+n-k} = \left(
	\begin{matrix}
		X_n \\
		0_{(n-k-1) \times 1} \\
	\end{matrix}
	\right), \nonumber \\
	& H_{[N+n-k-1],[N+n-k-1]} = \left(
	\begin{matrix}
		zI_N & \left( X_{k+1}, \ldots, X_{n-1} \right) \\
		\left( X_1, \ldots, X_{n-k-1} \right)^* & I_{n-k-1}
	\end{matrix}
	\right).
\end{align*}
One can follow the argument in Section \ref{Case2} to obtain the existence of
\begin{align*}
	H_{[N+n-k-1],[N+n-k-1]}^{-1} = \left(
	\begin{matrix}
		D & E\\
		F & G
	\end{matrix}
	\right),
\end{align*}
and the estimation \eqref{eq-3.30-Prob on Num} of $\Num$. For the estimation of $\Den$, one can follow the argument in Section \ref{Case3} to obtain
\begin{align*}
	\Den^2 \le C \|D\|^2,
\end{align*}
on the event $\{\|X\| \le C_0\}$. The estimation \eqref{eq-3.9-distance prob} follows from the estimations on $\Num$ and $\Den$.

\subsubsection{Case of \texorpdfstring{$1 \le l \le N$}{1 <= l <= N}}\label{Case5}

Without loss of generality, we only estimate \eqref{eq-3.10-distance} for the case $l = N$. Recalled the definition of $H$, we have
\begin{align*}
	&H_{N,N} = z, \
	H_{N,[N+n-k] \setminus \{N\}} = \left( 0_{1 \times (N-1)}, X_{N,[n] \setminus [k]} \right), \
	H_{[N+n-k] \setminus \{N\},N} = \left(
	\begin{matrix}
		0_{(N-1) \times 1} \\
		\left( X_{N, [n-k]} \right)^*
	\end{matrix}
	\right), \nonumber \\
	& H_{[N+n-k] \setminus \{N\},[N+n-k] \setminus \{N\}} = \left(
	\begin{matrix}
		zI_{N-1} & X_{[N-1], [n] \setminus [k]} \\
		\left( X_{[N-1],[n-k]} \right)^* & I_{n-k}
	\end{matrix}
	\right).
\end{align*}
One can follow the argument in Section \ref{Case2} to obtain the existence of
\begin{align*}
	H_{[N+n-k-1],[N+n-k-1]}^{-1} = \left(
	\begin{matrix}
		D & E\\
		F & G
	\end{matrix}
	\right),
\end{align*}
and the estimation \eqref{eq-3.30-Prob on Num} of $\Num$ with $D$ replaced by $G$. For the estimation of $\Den$, one can follow the argument in Section \ref{Case3} to obtain
\begin{align*}
	\Den^2 \le C \|G\|^2,
\end{align*}
on the event $\{\|X\| \le C_0\}$. The estimation \eqref{eq-3.9-distance prob} follows from the estimations on $\Num$ and $\Den$.

\section{Limiting eigenvalue empirical distribution}
\label{sec-LSD}

Though the small rank perturbation for Hermitian matrices fails in general, with the estimation on least singular value in Section \ref{sec-least singular value}, it turns out that the limit of $\mu_{\Yn}$ when $k/n = o(\ln^{-1} n)$ is the same as the limit of $\mu_{\Zn}$. The detail argument is developed in Section \ref{sec-LSD-gamma1=0}. In addition, we establish the limit of $\mu_{\Yn}$ when $k \ge n/2$ in Section \ref{sec-LSD-general}.


\subsection{The case \texorpdfstring{$k/n = o(\ln^{-1} n)$}{k=o(n/ln n)}} \label{sec-LSD-gamma1=0}

Let
\begin{align*}
	g(x) = \dfrac{x(1-\gamma_0 + 2x)^2}{1+x}, \quad x \in [0 \vee (\gamma_0 - 1), \gamma_0].
\end{align*}
then \cite{Bose2019} showed that $g$ is increasing and invertible on its domain. Moreover, under the conditions (C1) and (C2), \cite{Bose2019} also showed that $\mu_{\Zn}$ of the matrix in \eqref{def-Z} converges to a deterministic rotation invariant probability measure $\mu^{(\gamma_0)}$ in probability. Moreover, the distribution function of the radial component of $\mu^{(\gamma_0)}$ is
\begin{align*}
	\mu^{(\gamma_0)} \left( B(0,r) \right) =
	\begin{cases}
	\gamma_0^{-1} g^{-1}(r^2), & 0 \le r \le \gamma_0^{1/2}(\gamma_0+1)^{1/2}, \\
	1, & r > \gamma_0^{1/2}(\gamma_0+1)^{1/2},
	\end{cases}
\end{align*}
if $\gamma_0 \le 1$, and
\begin{align*}
	\mu^{(\gamma_0)} \left( B(0,r) \right) =
	\begin{cases}
	1 - \gamma_0^{-1}, & 0 \le r \le (\gamma_0 - 1)^{3/2} \gamma_0^{-1/2}, \\
	\gamma_0^{-1} g^{-1}(r^2), & (\gamma_0 - 1)^{3/2} \gamma_0^{-1/2} \le r \le \gamma_0^{1/2}(\gamma_0+1)^{1/2}, \\
	1, & r > \gamma_0^{1/2}(\gamma_0+1)^{1/2},
	\end{cases}
\end{align*}
if $\gamma_0 > 1$. Recall the matrix of interest $\Yn$ in \eqref{def-Y}.

\begin{theorem} \label{Thm-LSD-k=1}
Let $k = 1$ and let $N$ satisfy \eqref{eq-1.0-N/n}. Assume that the conditions (C1) and(C2) hold. Then $\mu_{\Yn }$ converges weakly to $\mu^{(\gamma_0)}$ in probability.
\end{theorem}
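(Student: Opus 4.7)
\textbf{Proof proposal for Theorem \ref{Thm-LSD-k=1}.}

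The plan is to apply Girko's Hermitization principle (Lemma \ref{Lemma-Hermitization}) to $\Yn$ and reduce the theorem to two ingredients: (i) convergence in probability of the singular value distributions $\nu_{\Yn - zI_N}$ to a deterministic limit $\nu_z$ for almost every $z \in \bC \setminus \{0\}$, and (ii) uniform integrability in probability of $\ln(x)$ for the family $\{\nu_{\Yn - zI_N}\}_{n \in \bN_+}$. Since \cite{Bose2019} already carried out exactly this scheme for $\Zn$ and obtained the limit $\mu^{(\gamma_0)}$, the task reduces to transferring both ingredients from $\Zn$ to $\Yn$. The bridge is the identity $\Zn = \Yn + \bx_1 \bx_n^*$, which exhibits $\Zn - z I_N$ as a rank-one perturbation of $\Yn - z I_N$, combined with the polynomial least singular value bound of Theorem \ref{Thm-least singular value}.

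For ingredient (i), the Weyl-type interlacing for singular values under rank-one perturbations gives $s_{j+1}(\Zn - zI_N) \le s_j(\Yn - zI_N) \le s_{j-1}(\Zn - zI_N)$ for all admissible $j$. In particular, the Kolmogorov distance between $\nu_{\Yn - zI_N}$ and $\nu_{\Zn - zI_N}$ is $O(1/N)$, so the convergence $\nu_{\Zn - zI_N} \to \nu_z$ in probability established in \cite{Bose2019} transfers verbatim to $\nu_{\Yn - zI_N} \to \nu_z$. For the upper tail part of ingredient (ii), on the event $\{\|X\| \le C_0\}$ (which has probability tending to one by Bai--Yin), one has $\|\Yn - z I_N\| \le \|X\|^2 + |z| \le C_0^2 + |z|$, so $\nu_{\Yn - z I_N}$ is supported in $[0, C_0^2 + |z|]$ and $\int_{x \ge e^K} \ln(x) \, d\nu_{\Yn - zI_N}(x) = 0$ for $K$ large. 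The lower tail is where the new input enters: split
\begin{align*}
  \int_0^{e^{-K}} |\ln x| \, d\nu_{\Yn - zI_N}(x)
  \le \frac{|\ln s_N(\Yn - zI_N)|}{N} + \frac{1}{N} \sum_{\substack{j < N \\ s_j(\Yn - zI_N) < e^{-K}}} |\ln s_j(\Yn - zI_N)|.
\end{align*}
On the event $\{s_N(\Yn - zI_N) \ge n^{-37/22}\} \cap \{\|X\| \le C_0\}$, which by Theorem \ref{Thm-least singular value} has probability at least $1 - C n^{-1/22}$, the first summand is at most $(37/22)(\ln n)/N \to 0$. By the interlacing $s_j(\Yn - zI_N) \ge s_{j+1}(\Zn - zI_N)$, the second summand is bounded by $\int_0^{e^{-K}} |\ln x| \, d\nu_{\Zn - zI_N}(x)$, which is uniformly small in probability because the logarithm is already uniformly integrable for $\{\nu_{\Zn - zI_N}\}$, as established in \cite{Bose2019}.

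The main obstacle is the lower tail portion of (ii): the matrix $\Yn$ is highly unstable near the origin (for instance it has rank at most $n - 1$ by construction), and a priori $s_N(\Yn - zI_N)$ could collapse faster than any polynomial rate, in which case $|\ln s_N(\Yn - zI_N)|/N$ would not vanish. The role of Theorem \ref{Thm-least singular value} is precisely to rule this out with the polynomial rate $n^{-37/22}$, which is more than enough to kill this contribution after division by $N$. Once the least singular value is thus tamed, the remaining singular values are controlled ``for free'' via the rank-one interlacing with $\Zn - zI_N$, piggybacking on the uniform integrability already proven in \cite{Bose2019}. Assembling these pieces and invoking Lemma \ref{Lemma-Hermitization} identifies the limit as $\mu^{(\gamma_0)}$, the same distribution obtained for $\Zn$.
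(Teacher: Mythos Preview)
Your proposal is correct and follows essentially the same route as the paper: both exploit that $\Zn - \Yn$ is rank one, use singular value interlacing to transfer the convergence $\nu_{\Zn - zI_N} \to \nu_z$ and the uniform integrability of $\ln$ from \cite{Bose2019} over to $\Yn$, and invoke Theorem~\ref{Thm-least singular value} to control the single extra term $\tfrac{1}{N}|\ln s_N(\Yn - zI_N)|$ before applying Lemma~\ref{Lemma-Hermitization}. The only cosmetic difference is that for ingredient~(i) the paper passes through the $2N\times 2N$ Hermitized matrices $\Sigma_A(z),\Sigma_J(z)$ and cites stability of the ESD under rank-two perturbation, whereas you argue the $O(1/N)$ Kolmogorov distance directly from the interlacing; these are equivalent.
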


\begin{proof}
Note that $A^{(n)} - J^{(n)}$ is a rank one matrix, so is $\Yn  - \Zn$. Thus, by Lemma \ref{Lemma-singular value sum}, we have
\begin{align*}
	s_i \left( \Yn  - zI_N \right) \ge s_{i+1} \left( \Zn - zI_N \right), \ \forall i \in [N-1].
\end{align*}
Thus, for small $\delta \in (0,1)$, on the event $\left\{ s_N \left( \Yn  - zI_N \right) < \delta \right\}$,
\begin{align*}
	\left| \int_0^{\delta} \ln(\lambda) d \nu_{\left( \Yn  - zI_N \right)} (\lambda) \right|
	=& \dfrac{1}{N} \sum_{s_i \left( \Yn  - zI_N \right) < \delta} \left| \ln \left( s_i \left( \Yn  - zI_N \right) \right) \right| \\
	=& \dfrac{1}{N} \left| \ln \left( s_N \left( \Yn  - zI_N \right) \right) \right| \\
	& + \dfrac{1}{N} \sum_{s_i \left( \Yn  - zI_N \right) < \delta, i<N} \left| \ln \left( s_i \left( \Yn  - zI_N \right) \right) \right| \\
	\le& \dfrac{1}{N} \left| \ln \left( s_N \left( \Yn  - zI_N \right) \right) \right| \\
	& + \dfrac{1}{N} \sum_{s_i \left( \Yn  - zI_N \right) < \delta, i<N} \left| \ln \left( s_{i+1} \left( \Zn - zI_N \right) \right) \right| \\
	\le& \dfrac{1}{N} \left| \ln \left( s_N \left( \Yn  - zI_N \right) \right) \right| \\
	& + \dfrac{1}{N} \sum_{s_i \left( \Zn - zI_N \right) < \delta} \left| \ln \left( s_i \left( \Zn - zI_N \right) \right) \right| \\
	=& \dfrac{1}{N} \left| \ln \left( s_N \left( \Yn  - zI_N \right) \right) \right| + \left| \int_0^{\delta} \ln(\lambda) d \nu_{\left( \Zn - zI_N \right)} (\lambda) \right|.
\end{align*}
Thus, for any $\epsilon > 0$, by Theorem \ref{Thm-least singular value}, we have
\begin{align*}
	& \bP \left( \left| \int_0^{\delta} \ln(\lambda) d \nu_{\left( \Yn  - zI_N \right)} (\lambda) \right| > \epsilon \right) \\
	=& \bP \left( \left| \int_0^{\delta} \ln(\lambda) d \nu_{\left( \Yn  - zI_N \right)} (\lambda) \right| > \epsilon, s_N \left( \Yn  - zI_N \right) < \delta \right) \\
	\le& \bP \left( \dfrac{1}{N} \left| \ln \left( s_N \left( \Yn  - zI_N \right) \right) \right| + \left| \int_0^{\delta} \ln(\lambda) d \nu_{\left( \Zn - zI_N \right)} (\lambda) \right| > \epsilon, s_N \left( \Yn  - zI_N \right) < \delta \right) \\
	\le& \bP \left( \dfrac{1}{N} \left| \ln \left( s_N \left( \Yn  - zI_N \right) \right) \right| + \left| \int_0^{\delta} \ln(\lambda) d \nu_{\left( \Zn - zI_N \right)} (\lambda) \right| > \epsilon, \right. \\
	& \left. n^{-13/11} < s_N \left( \Yn  - zI_N \right) < \delta \right) + Cn^{-1/22} + \bP \left( \|X\| > C_0 \right) \\
	\le& \bP \left( \dfrac{13 \ln n}{11N} + \left| \int_0^{\delta} \ln(\lambda) d \nu_{\left( \Zn - zI_N \right)} (\lambda) \right| > \epsilon \right) + Cn^{-1/22} + \bP \left( \|X\| > C_0 \right).
\end{align*}
In \cite[(35)]{Bose2019}, for all $z \in \bC \setminus \{0\}$, for all $\epsilon > 0$,
\begin{align*}
	\lim_{\delta \rightarrow 0+} \limsup_{n \in \bN_+} \bP \left( \left| \int_0^{\delta} \ln(\lambda) d \nu_{\left( \Zn - zI_N \right)} (\lambda) \right| > \epsilon \right) = 0.
\end{align*}
Thus, when choosing $C_0$ large, we have
\begin{align*}
	\lim_{\delta \rightarrow 0+} \limsup_{n \in \bN_+} \bP \left( \left| \int_0^{\delta} \ln(\lambda) d \nu_{\left( \Yn  - zI_N \right)} (\lambda) \right| > \epsilon \right) = 0,
\end{align*}
where we obtain the uniform integrability of the logarithm with respect to the $\nu_{\left( \Yn  - zI_N \right)}$ in probability, for all $z \in \bC \setminus \{0\}$.

Next, we consider the Hermitian matrix
\begin{align*}
	\Sigma_A(z) = \left(
	\begin{matrix}
	0 & \Yn  - zI_N \\
	\left( \Yn  - zI_N \right)^* & 0
	\end{matrix}
	\right).
\end{align*}
and
\begin{align*}
	\Sigma_J(z) = \left(
	\begin{matrix}
	0 & \Zn - zI_N \\
	\left( \Zn - zI_N \right)^* & 0
	\end{matrix}
	\right).
\end{align*}
Then by \cite{Bose2019}, there exists a probability measure $\nu_z$, such that $\nu_{\left( \Zn - zI_N \right)}$ converges weakly to $\nu_z$ almost surely. Note that the set of eigenvalues of $\Sigma_J$ is
\begin{align*}
	\left\{ \lambda_i(\Sigma_J) : i \in [2N] \right\}
	= \left\{ \pm s_i\left( \Zn - zI_N \right): i \in [N] \right\},
\end{align*}
we have
\begin{align*}
	\mu_{\Sigma_J} (x) = \dfrac{\nu_{\left( \Zn - zI_N \right)} (x) + \nu_{\left( \Zn - zI_N \right)}(-x)}{2}.
\end{align*}
Thus, if we denote by $\check{\nu}_z$ the symmetrization of $\nu_z$, which is the probability measure defined by $\check{\nu}_z(E) = \dfrac{\nu_z(E) + \nu_z(-E)}{2}$ for all Borel set $E$, then $\mu_{\Sigma_J(z)}$ converges weakly to $\check{\nu}_z$ almost surely for almost all $z \in \bC$. Moreover, since $A^{(n)} - J^{(n)}$ has rank one, $\Sigma_A(z) - \Sigma_J(z)$ is a rank two matrix. By the Stability of ESD laws with respect to small rank perturbations (\cite[Exercise 2.4.4]{Tao2012}), we can deduce the weakly convergence of $\mu_{\Sigma_A(z)}$ towards $\check{\nu}_z$ almost surely for almost all $z \in \bC$. Hence, we obtain the weakly convergence of $\nu_{\left( \Yn  - zI_N \right)}$ towards $\nu_z$.

Therefore, by Lemma \ref{Lemma-Hermitization}, $\mu_{\Yn }$ converges weakly to some probability measure $\mu'$ in probability for almost all $z \in \bC$, and the limit measure $\mu'$ is satisfies
\begin{align*}
	\cL_{\mu'}(z) = - \int_0^{\infty} \ln(\lambda) d\nu_z(\lambda).
\end{align*}
Since the singular value empirical distributions of $\Yn  - zI_N$ and $\Zn - zI_N$ converge to the same limit $\nu_z$, we have $\cL_{\mu'}(z) = \cL_{\mu^{(\gamma_0)}}(z)$ for almost all $z \in \bC$. Then by Lemma \ref{Lemma-unique of log potential}, we have $\mu' = \mu^{(\gamma_0)}$.
\end{proof}

\begin{theorem} \label{Thm-LSD-general small k}
Let $N,k$ satisfy \eqref{eq-1.0-N/n} such that $k/n = o(\ln^{-1} n)$. Assume that the conditions (C1) and(C2) hold. then $\mu_{\Yn }$ converges weakly to $\mu^{(\gamma_0)}$ in probability.
\end{theorem}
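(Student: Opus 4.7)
The plan is to extend the argument of Theorem~\ref{Thm-LSD-k=1} by replacing the lag-$1$ cyclic comparison matrix $\Zn$ of \cite{Bose2019} by an analogous lag-$k$ cyclic one. Let $J_k^{(n)}$ be the $n\times n$ permutation matrix with $(J_k^{(n)})_{ij}=1$ iff $i\equiv j+k\pmod{n}$ (that is, $J_k^{(n)}$ has the block form with $I_k$ in the upper-right corner and $I_{n-k}$ in the lower-left), and set $\widetilde Z^{(n)}:=X^{(n)} J_k^{(n)} (X^{(n)})^*$. Since $J_k^{(n)}$ is unitary, hence asymptotically non-degenerate, the general form of the result of \cite{Bose2019} for quadratic monomials $X^{(n)} C^{(n)} (X^{(n)})^*$ quoted in the introduction yields $\mu_{\widetilde Z^{(n)}}\to\mu^{(\gamma_0)}$ in probability, with the same limiting singular-value measure $\nu_z$ as in the lag-$1$ case, and also the uniform integrability of $\ln(x)$ for $\{\nu_{\widetilde Z^{(n)}-zI_N}\}$ in probability, for almost every $z\in\bC\setminus\{0\}$.

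The key algebraic fact is that $A^{(n)}-J_k^{(n)}$ has rank exactly $k$: the two matrices coincide outside the top $k$ rows, where $A^{(n)}$ is zero and $J_k^{(n)}$ carries the wrap-around block $I_k$. Hence $\Yn-\widetilde Z^{(n)}=X^{(n)}(A^{(n)}-J_k^{(n)})(X^{(n)})^*$ has rank at most $k$, and Lemma~\ref{Lemma-singular value sum} gives the interlacing $s_{i+k}(\widetilde Z^{(n)}-zI_N)\le s_i(\Yn-zI_N)$ for $i\le N-k$. Following the computation in the proof of Theorem~\ref{Thm-LSD-k=1}, I would split the indices $i$ with $s_i(\Yn-zI_N)<\delta$ into those with $i\le N-k$ and those with $i>N-k$. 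The first group is dominated term by term by $|\ln s_{i+k}(\widetilde Z^{(n)}-zI_N)|$, yielding at most $\bigl|\int_0^\delta \ln\lambda\, d\nu_{\widetilde Z^{(n)}-zI_N}(\lambda)\bigr|$ after dividing by $N$. The second group contains at most $k$ terms, and on the event of Theorem~\ref{Thm-least singular value}, which occurs with probability $\ge 1-Cn^{-1/22}$, each such $s_i$ is at least $s_N(\Yn-zI_N)\ge n^{-37/22}$, so contributes at most $(37k/(22N))\ln n$. This boundary term vanishes precisely under the hypothesis $k/n=o(1/\ln n)$, which is exactly where the strengthening from $k=1$ demands more than $\gamma_1=0$.

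Granted the uniform integrability of $\ln$ for $\{\nu_{\Yn-zI_N}\}$ in probability resulting from the previous step, it remains to identify the limiting singular-value measure. Consider the Hermitizations $\Sigma_{\Yn}(z)$ and $\Sigma_{\widetilde Z^{(n)}}(z)$; their difference has rank at most $2k$ in dimension $2N$, so by the stability of ESDs under small-rank perturbations (\cite[Exercise~2.4.4]{Tao2012}) the two ESDs differ by $O(k/N)\to 0$ in Kolmogorov distance under \eqref{eq-1.0-N/n}. Hence $\nu_{\Yn-zI_N}\to\nu_z$ in probability for almost every $z$, and Lemma~\ref{Lemma-Hermitization} produces a limit $\mu'$ of $\mu_{\Yn}$ with $\cL_{\mu'}=\cL_{\mu^{(\gamma_0)}}$; Lemma~\ref{Lemma-unique of log potential} then forces $\mu'=\mu^{(\gamma_0)}$. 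The main obstacle in this plan is the treatment of the $k$ singular values not covered by the interlacing: handling them requires both the polynomial floor $s_N\ge n^{-37/22}$ supplied by Theorem~\ref{Thm-least singular value} and the quantitative assumption $k=o(n/\ln n)$ to offset the unavoidable $O(\ln n)$ size of each $|\ln s_i|$; relaxing to $k/n\to 0$ alone would require a sharper local law for the small singular values of $\Yn-zI_N$.
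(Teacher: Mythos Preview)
Your proposal follows the same overall architecture as the paper's proof: compare $\Yn$ to a better-behaved matrix differing by rank $O(k)$, use Lemma~\ref{Lemma-singular value sum} to interlace all but the bottom $k$ singular values, invoke Theorem~\ref{Thm-least singular value} to show that those bottom $k$ values each contribute $O((\ln n)/N)$ so that the hypothesis $k=o(n/\ln n)$ kills the boundary term, transfer the singular-value limit via rank stability of the Hermitization, and conclude by Lemmas~\ref{Lemma-Hermitization} and~\ref{Lemma-unique of log potential}. All of this matches the paper exactly, including your correct identification of why the condition $k/n=o(1/\ln n)$ is the natural threshold.

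The one genuine difference is the choice of comparison matrix. You compare to the cyclic lag-$k$ matrix $\widetilde Z^{(n)}=X^{(n)}J_k^{(n)}(X^{(n)})^*$ and appeal to \cite{Bose2019} for its LSD and the uniform integrability of $\ln$. The paper instead builds $\widetilde Y^{(n)}$ by reordering the columns of $X^{(n)}$ so that the lag-$k$ structure becomes a lag-$1$ structure on the permuted columns, at the cost of $k-1$ rank-one corrections (illustrated for $k=2$ by interleaving odd and even indices); since permuting i.i.d.\ columns preserves the distribution, $\widetilde Y^{(n)}$ has exactly the law of the lag-$1$ matrix already handled in Theorem~\ref{Thm-LSD-k=1}, and no further appeal to \cite{Bose2019} is needed. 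Your route is equally valid but leans on a claim the paper's introduction does not quite state: the passage you cite guarantees only the least-singular-value bound for general $C^{(n)}$ and the LSD specifically for $C^{(n)}=J_1^{(n)}$, so you would still need to argue that $\widetilde Z^{(n)}$ shares the limit $\mu^{(\gamma_0)}$---for instance by observing that $J_k^{(n)}$ is permutation-conjugate to a block-cyclic matrix differing from $J_1^{(n)}$ by rank at most $k$, or that its $*$-moments converge to those of a Haar unitary when $k/n\to 0$. The paper's reordering trick sidesteps this point entirely.
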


\begin{proof}
We consider the case $k = 2$ first. The proof is similar to the proof of Theorem \ref{Thm-LSD-k=1}, which is sketched below. Recalled that
\begin{align*}
	\Yn 
	= \sum_{i=1}^{n-2} X_{i+2} X_i^*.
\end{align*}
Let
\begin{align*}
	\widetilde{Y}^{(n)} = Y^{(n)} + X_2 X_{n_o}^*
	= \left( X_1, X_3, \ldots, X_{n_o}, X_2, X_4, \ldots, X_{n_e} \right) \left(
	\begin{matrix}
	0_{1 \times (n-1)} & 0 \\
	I_{n-1} & 0_{(n-1) \times 1}
	\end{matrix}
	\right) \left(
	\begin{matrix}
	X_1^* \\
	\vdots \\
	X_{n_o}^* \\
	X_2^* \\
	\vdots \\
	X_{n_e}^*
	\end{matrix}
	\right),
\end{align*}
where $n_o = 2 \lfloor (n-1)/2 \rfloor + 1$ is the largest odd number that does not exceed $n$, and $n_e = 2 \lfloor n/2 \rfloor$ is the largest even number that does not exceed $n$.

Since $\left( Y^{(n)} - zI_N \right) - \left( \widetilde{Y}^{(n)} - zI_N \right) = X_2X_{n_o}^*$ is a rank one matrix, then by Lemma \ref{Lemma-singular value sum} and the argument in the beginning of the proof of Theorem \ref{Thm-LSD-k=1}, we can obtain
\begin{align*}
	\left| \int_0^{\delta} \ln (\lambda) d\nu_{Y^{(n)} - zI_N} (\lambda) \right|
	\le \dfrac{1}{N} \left| \ln \left( s_N \left( Y^{(n)} - zI_N \right) \right) \right| + \left| \int_0^{\delta} \ln (\lambda) d\nu_{\widetilde{Y}^{(n)} - zI_N} (\lambda) \right|.
\end{align*}
Note that $\left( X_1, X_3, \ldots, X_{n_o}, X_2, X_4, \ldots, X_{n_e} \right) \overset{d}{=} X$, the logarithm function is uniform integrable near zero with respect to $\widetilde{Y}^{(n)} - zI_N$. Hence, by Theorem \ref{Thm-least singular value}, one can obtain the uniform integrability of the logarithm function with respect to $Y^{(n)} - zI_N$ in probability for all $z \in \bC \setminus \{0\}$ by using a similar argument to the proof of Theorem \ref{Thm-LSD-k=1}.

Next, we denote
\begin{align*}
	\Sigma_{Y^{(n)}} (z) = \left(
	\begin{matrix}
	0 & Y^{(n)} - zI_N \\
	(Y^{(n)} - zI_N)^* & 0
	\end{matrix}
	\right), \
	\Sigma_{\widetilde{Y}^{(n)}}(z) = \left(
	\begin{matrix}
	0 & \widetilde{Y}^{(n)} -zI_N \\
	\left( \widetilde{Y}^{(n)} - zI_N \right)^* & 0
	\end{matrix}
	\right).
\end{align*}
Then by the proof of Theorem \ref{Thm-LSD-k=1}, $\mu_{\Sigma_{\widetilde{Y}^{(n)}}(z)}$ converges weakly to $\check{\nu}_z$ almost surely for almost all $z \in \bC$. Since $\Sigma_{Y^{(n)}}(z) - \Sigma_{\widetilde{Y}^{(n)}}(z)$ is a rank two matrix, by the Stability of ESD laws with respect to small rank perturbations (\cite[Exercise 2.4.4]{Tao2012}), we can deduce the weakly convergence of $\mu_{\Sigma_{Y^{(n)}}(z)}$ towards $\check{\nu}_z$ almost surely for almost all $z \in \bC$. Then we obtain the weakly convergence of $\nu_{Y^{(n)} - zI_N}$ towards $\nu_z$ almost surely for almost all $z \in \bC$.

Therefore, by Lemma \ref{Lemma-Hermitization}, $\mu_{Y}$ converges weakly to a probability measure in probability for almost all $z \in \bC$, whose logarithmic potential is the same as $\mu^{(\gamma_0)}$. Then the theorem follows from Lemma \ref{Lemma-unique of log potential}.

The general case $k/n = o\left(\ln^{-1} n\right)$ is similar. We can define the matrix $\widetilde{Y}^{(n)}$ through $Y^{(n)}$ by adding a rank $k-1$ matrix. Similar to the proof of Theorem \ref{Thm-LSD-k=1}, we can obtain
\begin{align*}
	\left| \int_0^{\delta} \ln (\lambda) d\nu_{Y^{(n)} - zI_N} (\lambda) \right|
	\le \dfrac{k-1}{N} \left| \ln \left( s_N \left( Y^{(n)} - zI_N \right) \right) \right| + \left| \int_0^{\delta} \ln (\lambda) d\nu_{\widetilde{Y}^{(n)} - zI_N} (\lambda) \right|,
\end{align*}
which leads to the uniform integrability of the logarithm function with respect to $Y^{(n)}-zI_N$. Since the matrix $\Sigma_{Y^{(n)}}(z) - \Sigma_{\widetilde{Y}^{(n)}}(z)$ has rank $2k$, which is $o(n)$, we can deduce the weakly convergence in probability of $\mu_{\Yn }$ towards $\mu^{(\gamma_0)}$ as the case $k=2$.
\end{proof}

\subsection{The case \texorpdfstring{$k \ge n/2$}{k>=n/2}} \label{sec-LSD-general}

\begin{theorem} \label{Thm-LSD-general k}
Assume that the conditions (C1) and(C2) hold. Let $N,k$ satisfy \eqref{eq-1.0-N/n} such that $k \ge n/2$, then there exists a probability measure $\mu^{(\gamma_0,\gamma_1)}$, sucht that $\mu_{\Yn}$ converges weakly to $\mu^{(\gamma_0,\gamma_1)}$ in probability.
\end{theorem}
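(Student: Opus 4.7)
The plan is to apply Girko's Hermitization principle (Lemma~\ref{Lemma-Hermitization}) to $B_n = \Yn$. This requires verifying (i) in-probability convergence of $\nu_{\Yn - zI_N}$ to a deterministic limit $\nu_z$ for almost every $z \in \bC$, and (ii) uniform integrability in probability of $\ln(\cdot)$ with respect to the family $\{\nu_{\Yn - zI_N}\}_n$.

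The crucial structural observation is that the hypothesis $k \ge n/2$ forces the two column index sets $\{1, \ldots, n-k\}$ and $\{k+1, \ldots, n\}$ to be disjoint. Setting $U = (X_{k+1}, \ldots, X_n)$ and $V = (X_1, \ldots, X_{n-k})$, both $N \times (n-k)$ matrices with i.i.d.\ entries of variance $1/n$, the matrices $U$ and $V$ are therefore \emph{independent}, and $\Yn = UV^*$. The problem of establishing (i) thus reduces to finding the limiting singular value distribution of $UV^* - zI_N$ for two independent rectangular i.i.d.\ matrices, for which standard Stieltjes-transform and free-probability techniques apply. This independence is the feature that the short-lag arguments of Section~\ref{sec-LSD-gamma1=0} lacked, and is what makes the long-lag regime accessible by completely different means than small-rank perturbation.

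Concretely, for (i) I would study the Stieltjes transform $m_n(w) = \tfrac{1}{N} \Tr\bigl( (UV^* - zI_N)(UV^* - zI_N)^* - wI_N \bigr)^{-1}$. Conditioning on $V$, the matrix $UV^*$ is linear in the entries of $U$, so resolvent identities for sums of rank-one terms produce a tractable self-consistent equation for the conditional mean of $m_n$; a second round of the same argument, now averaging over $V$, yields a fixed-point system that characterizes a deterministic limit $m(w;z)$ and hence $\nu_z$. Concentration of $m_n(w)$ around its mean is standard, for example by McDiarmid applied one column at a time to $U$ then to $V$. The block matrix $\mathcal{M}(z) = \begin{pmatrix} zI_N & U \\ V^* & I_{n-k} \end{pmatrix}$ underpinning the proof of Theorem~\ref{Thm-least singular value} provides a convenient linearization in which these resolvent manipulations are especially clean, since all entries of the off-diagonal blocks are independent and i.i.d. For (ii), Theorem~\ref{Thm-least singular value} gives $s_N(\Yn - zI_N) \ge n^{-37/22}$ with probability $1 - O(n^{-1/22})$; combining with $\|\Yn\| \le \|X\|^2 \le C_0^2$ at the upper end and the absence of an atom at $0$ in $\nu_z$ (a qualitative output of the fixed-point analysis in (i)), one obtains uniform integrability of $\ln(\cdot)$ in probability by the same truncation argument as in the proof of Theorem~\ref{Thm-LSD-k=1}, but without needing a rank-one perturbation to reach $\Zn$.

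The main obstacle is the rigorous derivation and unique solvability of the Stieltjes-transform fixed-point system: the non-Hermitian parameter $z$ enters through the off-diagonal coupling between the $N$-dimensional and $(n-k)$-dimensional blocks of the resolvent, so the resulting self-consistent relation is most naturally stated as a two-component (or matrix-valued) Stieltjes transform rather than a scalar one, and its unique $\bC_+$-valued solution must be identified with care. Once (i) and (ii) are in hand, Lemma~\ref{Lemma-Hermitization} produces a probability measure $\mu^{(\gamma_0, \gamma_1)} \in \cP(\bC)$ characterized by $\cL_{\mu^{(\gamma_0, \gamma_1)}}(z) = -\int_0^\infty \ln(\lambda)\, d\nu_z(\lambda)$, and $\mu_{\Yn}$ converges weakly to $\mu^{(\gamma_0, \gamma_1)}$ in probability.
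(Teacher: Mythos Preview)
Your overall Hermitization strategy matches the paper's, and your structural observation that $k\ge n/2$ makes the column blocks $(X_{k+1},\dots,X_n)$ and $(X_1,\dots,X_{n-k})$ disjoint---hence $\Yn=UV^*$ with $U,V$ independent---is correct and genuinely clarifying. The paper does not state this independence explicitly; for part (i) it instead reduces to Gaussian entries and invokes asymptotic freeness (Male's theorem) applied to $(X^{(n)})^*X^{(n)}$ together with the deterministic pair $A^{(n)},(A^{(n)})^*$, obtaining convergence of $\nu_{\Yn-zI_N}$ via convergence of traces of non-commutative polynomials. Your direct Stieltjes/resolvent route through the linearization $\mathcal{M}(z)$ is a reasonable alternative and would likely be more explicit about the limit; either approach should work for (i).

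There is, however, a real gap in your treatment of (ii). The polynomial least-singular-value bound of Theorem~\ref{Thm-least singular value} together with the qualitative statement $\nu_z(\{0\})=0$ does \emph{not} by itself yield uniform integrability of $\ln(\cdot)$: the bound $s_N\ge n^{-37/22}$ controls only the bottom singular value, while uniform integrability requires controlling $\tfrac{1}{N}\sum_{s_i<\delta}|\ln s_i|$, i.e.\ the \emph{number} of small singular values weighted by their logarithms. Weak convergence to a limit with no atom at $0$ gives no quantitative rate for $\nu_n((0,\delta))$ against the factor $\ln n$ coming from the smallest eigenvalues. You also misread the mechanism of Theorem~\ref{Thm-LSD-k=1}: there the uniform integrability for $\Yn$ is inherited from that of $\Zn$ (established separately in \cite{Bose2019}) via the rank-one interlacing, with Theorem~\ref{Thm-least singular value} handling only the single possibly displaced singular value. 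No such comparison matrix is available here.

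What the paper actually does for (ii) is derive a Wegner-type estimate $\bE\,\mu_{\Sigma_{\Yn}}((-t,t))\le Ct(1+t^{-\alpha}n^{-\beta})$ by reducing to Gaussian entries and using integration by parts for the resolvent of the Hermitized matrix. This exploits the algebraic structure of $A^{(n)}$ (in particular that $A^{(n)}(A^{(n)})^*$ and $(A^{(n)})^*A^{(n)}$ are explicit projections when $k\ge n/2$) to solve the resulting self-consistent equations for $g_{ij}=\bE[\tfrac1n\Tr G_{ij}]$. The Wegner bound, combined with Theorem~\ref{Thm-least singular value}, then gives uniform integrability by a standard layering argument. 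Your independence observation could conceivably shorten this derivation, but some quantitative local law or Wegner estimate---not merely the absence of an atom---is the missing ingredient in your proposal.
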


\begin{proof}
We apply the logarithmic potential technique (Lemma \ref{Lemma-Hermitization}) to obtain the convergence of $\left\{ \mu_{\Yn}: n\in \bN_+ \right\}$. We divide the proof into two steps. In Step 1, we prove the uniform integrability of the logarithm function for the family $\left\{ \nu_{Y^{(n)} - zI_N}: N \in \bN_+ \right\}$ in probability. Then we prove the almost sure convergence of the singular value empirical measure $\left\{ \nu_{Y^{(n)} - zI_N}: N \in \bN_+ \right\}$ in Step 2.

\textbf{Step 1.}
We still denote
\begin{align*}
	\Sigma_{Y^{(n)}} (z) = \left(
	\begin{matrix}
		0 & Y^{(n)} - zI_N \\
		(Y^{(n)} - zI_N)^* & 0
	\end{matrix}
	\right).
\end{align*}
For $\eta \in \bC_+ = \{w \in \bC: \Im w > 0\}$, we denote the resolvent $G(z,\eta) = \left( \Sigma_{Y^{(n)}} (z) - \eta I_{2N} \right)^{-1}$, then by Lemma \ref{Lemma-Inverse of block matrix}, we have
\begin{align} \label{eq-def-G}
	G(z,\eta)
	= \left(
	\begin{matrix}
		-\eta I_N & Y^{(n)} - zI_N \\
		(Y^{(n)} - zI_N)^* & -\eta I_N
	\end{matrix}
	\right)^{-1}
	= \left(
	\begin{matrix}
		G_{11}(z,\eta), & G_{12}(z,\eta) \\
		G_{21}(z,\eta), & G_{22}(z,\eta) \\
	\end{matrix}
	\right),
\end{align}
where
\begin{align}
	G_{11}(z,\eta) &= \eta \left( \left( Y^{(n)} - zI_N \right) \left( Y^{(n)} - zI_N \right)^* - \eta^2 I_N \right)^{-1}, \label{eq-G11} \\
	G_{12}(z,\eta) &= \left( \left( Y^{(n)} - zI_N \right) \left( Y^{(n)} - zI_N \right)^* - \eta^2 I_N \right)^{-1} \left( Y^{(n)} - zI_N \right) \label{eq-G12} \\
	G_{21}(z,\eta) &= \left( Y^{(n)} - zI_N \right)^* \left( \left( Y^{(n)} - zI_N \right) \left( Y^{(n)} - zI_N \right)^* - \eta^2 I_N \right)^{-1} \label{eq-G21} \\
	G_{22}(z,\eta) &= \eta \left( \left( Y^{(n)} - zI_N \right)^* \left( Y^{(n)} - zI_N \right) - \eta^2 I_N \right)^{-1}. \label{eq-G22}
\end{align}
We first establish the following so-called Wegner estimation
\begin{align} \label{eq-Wegner estimate}
	\dfrac{-\iota}{N} \bE \left[ \Tr \left( G(z, \iota t) \right) \right] \le C \left( 1 + t^{-\alpha} n^{-\beta} \right), \forall z \not= 0, \forall t \in (0,1/2),
\end{align}
for some positive constants $C, \alpha, \beta$.

We follow the idea in \cite{Bose2019} to prove \eqref{eq-Wegner estimate}. By a standard concentration argument (see \cite[Proposition 26]{Bose2019}) one can assume that the entries of $X^{(n)}$ are complex Gaussian. Note that for $k,l,i \in [N]$ and $j \in [n]$, we have
\begin{align}
	\dfrac{\partial \left( G_{11} \right)_{kl}}{\partial \overline{X}_{ij}}
	&= - \left( G_{12} X \left( A^{(n)} \right)^* \right)_{kj} \left( G_{11} \right)_{il} - \left( G_{11} XA^{(n)} \right)_{kj} \left( G_{21} \right)_{il} \label{eq-derivative 1} \\
	\dfrac{\partial \left( G_{12} \right)_{kl}}{\partial \overline{X}_{ij}}
	&= - \left( G_{11} XA^{(n)} \right)_{kj} \left( G_{22} \right)_{il} - \left( G_{12} X \left( A^{(n)} \right)^* \right)_{kj} \left( G_{12} \right)_{il} \label{eq-derivative 2}.
\end{align}
By \eqref{eq-derivative 1}, \eqref{eq-derivative 2} and the Integration by Parts formula for Gaussian variables, for $i, j \in [n]$, we have
\begin{align}
	\bE \left[ X_i^* G_{11} X_j \right]
	&= 1_{\{i=j\}} \bE \left[ \dfrac{1}{n} \Tr G_{11} \right]
	- \bE \left[ \left( X^* G_{12} X \left( A^{(n)} \right)^* \right)_{ij} \cdot \dfrac{1}{n} \Tr G_{11} \right] \nonumber \\
	&\quad - \bE \left[ \left( X^* G_{11} XA^{(n)} \right)_{ij} \cdot \dfrac{1}{n} \Tr G_{21} \right], \label{eq-IP-1} \\
	\bE \left[ X_i^* G_{12} X_j \right]
	&= 1_{\{i=j\}} \bE \left[ \dfrac{1}{n} \Tr G_{12} \right]
	- \bE \left[ \left( X^* G_{11} XA^{(n)} \right)_{ij} \cdot \dfrac{1}{n} \Tr G_{22} \right] \nonumber \\
	&\quad- \bE \left[ \left( X^* G_{12} X \left( A^{(n)} \right)^* \right)_{ij} \cdot \dfrac{1}{n} \Tr G_{12} \right]. \label{eq-IP-2}
\end{align}

By Lemma \ref{Lemma-Poincare-Nash} and \eqref{eq-derivative 1}, we have
\begin{align*}
	\Var \left( \dfrac{1}{n} \Tr G_{11} \right)
	&= \dfrac{1}{n^2} \Var \left( \Tr G_{11} \right) \\
	&\le \dfrac{1}{2n^3} \bE \left[ \sum_{i \in [N], j\in [n]} \left| \dfrac{\partial \Tr G_{11}}{\partial \Re X_{ij}} \right|^2 + \sum_{i \in [N], j\in [n]} \left| \dfrac{\partial \Tr G_{11}}{\partial \Im X_{ij}} \right|^2 \right] \\
	&= \dfrac{1}{n^3} \bE \left[ \sum_{i \in [N], j\in [n]} \left( \left| \dfrac{\partial \Tr G_{11}}{\partial \overline{X}_{ij}} \right|^2 + \left| \dfrac{\partial \Tr G_{11}}{\partial X_{ij}} \right|^2 \right) \right] \\
	&= \dfrac{1}{n^3} \bE \left[ \sum_{i \in [N], j\in [n]} \left( \left| \sum_{k \in [N]} \dfrac{\partial \left( G_{11} \right)_{kk}}{\partial \overline{X}_{ij}} \right|^2 + \left| \sum_{k \in [N]} \dfrac{\partial \left( G_{11} \right)_{kk}}{\partial X_{ij}} \right|^2 \right) \right] \\
	&= \dfrac{2}{n^3} \bE \left[ \sum_{i \in [N], j\in [n]} \left| \left( G_{11} G_{12} X \left( A^{(n)} \right)^* \right)_{ij} + \left( G_{21} G_{11} XA^{(n)} \right)_{ij} \right|^2 \right] \\
	&\le \dfrac{4}{n^3} \bE \left[ \left\| G_{11} G_{12} X \left( A^{(n)} \right)^* \right\|_{HS}^2 + \left\| G_{21} G_{11} XA^{(n)} \right\|_{HS}^2 \right] \\
	&\le \dfrac{4}{n^2} \bE \left[ \left\| G_{11} G_{12} X \left( A^{(n)} \right)^* \right\|^2 + \left\| G_{21} G_{11} XA^{(n)} \right\|^2 \right].
\end{align*}
Here, we use the fact that $\|M\|_{HS} \le \sqrt{n} \|M\|$ for $M \in \bC^{n \times n}$. Note that $\|A^{(n)}\| = 1$, $\|X\| \to 1 + \sqrt{\gamma_0}$, $\left\| \left( \left( Y^{(n)} - zI_N \right) \left( Y^{(n)} - zI_N \right)^* - \eta^2 I_N \right)^{-1} \right\| \le \left( \Im \eta \right)^{-2}$, we have
\begin{align*}
	\Var \left( \dfrac{1}{n} \Tr G_{11} \right)
	\le \dfrac{C' |\eta|^2}{n^2 \left( \Im \eta \right)^{8}}.
\end{align*}
Here, $C'$ is a positive constant that depends only on $\gamma_0$ and $z$ and may vary in different places. By a similar argument, one can obtain
\begin{align} \label{eq-variance of tr G}
	\Var \left( \dfrac{1}{n} \Tr G_{ij} \right)
	\le \dfrac{C' (|\eta|^4 + 1)}{n^2 \left( \Im \eta \right)^{8}}, \ i,j = 1,2.
\end{align}

By Lemma \ref{Lemma-Poincare-Nash} and \eqref{eq-derivative 1}, for $k,l \in [N]$, we have
\begin{align*}
	& \Var \left( X_k^* G_{11} X_l \right) \\
	\le& \dfrac{1}{2n} \bE \left[ \sum_{i \in [N], j\in [n]} \left| \dfrac{\partial \left( X_k^* G_{11} X_l \right)}{\partial \Re X_{ij}} \right|^2 + \sum_{i \in [N], j\in [n]} \left| \dfrac{\partial \left( X_k^* G_{11} X_l \right)}{\partial \Im X_{ij}} \right|^2 \right] \\
	=& \dfrac{1}{n} \bE \left[ \sum_{i \in [N], j\in [n]} \left( \left| \dfrac{\partial \left( X_k^* G_{11} X_l \right)}{\partial \overline{X}_{ij}} \right|^2 + \left| \dfrac{\partial \left( X_k^* G_{11} X_l \right)}{\partial X_{ij}} \right|^2 \right) \right] \\
	=& \dfrac{1}{n} \bE \left[ \sum_{j \in [n]} \left| e_j^{(N)} G_{11}X_l \right|^2 + \sum_{i \in [N], j\in [n]} \left( \left| X_k^* \dfrac{\partial G_{11}}{\partial \overline{X}_{ij}} X_l \right|^2 + \left| X_k^* \dfrac{\partial G_{11}}{\partial X_{ij}} X_l \right|^2 \right) + \sum_{i \in [N]} \left| X_k^* G_{11} e_i^{(N)} \right|^2 \right] \\
	=& \dfrac{1}{n} \bE \Bigg[ 2 \sum_{i \in [N], j\in [n]} \left| \left( X_k^* G_{12} X \left( A^{(n)} \right)^* \right)_j \left( G_{11}X_l \right)_i + \left( X_k^* G_{11} XA^{(n)} \right)_j \left( G_{21}X_l \right)_i \right|^2 \\
	& \qquad\qquad + \left\| G_{11} X_l \right\|^2 + \left\| X_k^* G_{11} \right\|^2 \Bigg] \\
	\le& \dfrac{1}{n} \bE \left[ 4 \left\| X_k^* G_{12} X \left( A^{(n)} \right)^* \right\|^2 \left\| G_{11}X_l \right\|^2 + 4 \left\| X_k^* G_{11} XA^{(n)} \right\|^2 \left\| G_{21}X_l \right\|^2 + \left\| G_{11} X_l \right\|^2 + \left\| X_k^* G_{11} \right\|^2 \right] \\
	\le& \dfrac{1}{n} \bE \Big[ 4 \left\| G_{12}X \right\|^2 \left\| X_k \right\|^2 \left\| G_{11} \right\|^2 \left\| X_l \right\|^2 + 4 \left\| G_{11}X \right\|^2 \left\| X_k \right\|^2 \left\| G_{21} \right\|^2 \left\| X_l \right\|^2 + \left\| G_{11} \right\|^2 \left( \left\| X_l \right\|^2 + \left\| X_k \right\|^2 \right) \Big] \\
	\le& \dfrac{C' |\eta|^2}{n \left( \Im \eta \right)^4} \left( 1 + \dfrac{1}{\left( \Im \eta \right)^4} \right).
\end{align*}
By a similar argument, one can obtain
\begin{align} \label{eq-variance X^*GX}
	\Var \left( X_k^* G_{ij} X_l \right)
	\le \dfrac{C' \left( 1 + |\eta|^4 \right) \left( 1 + \left( \Im \eta \right)^4 \right)}{n \left( \Im \eta \right)^8}, \
	i,j = 1,2.
\end{align}
We denote
\begin{align} \label{eq-def-Psi}
	\Psi(\eta) = \dfrac{\left( 1 + |\eta|^4 \right) \left( 1 + \left( \Im \eta \right)^2 \right)}{\left( \Im \eta \right)^8},
\end{align}
then $\Var \left( \frac{1}{n} \Tr G_{ij} \right) \Var \left( X_k^* G_{i'j'} X_l \right) \le C' \Psi(\eta)^2/n^3$. Hence, substitute \eqref{eq-variance of tr G} and \eqref{eq-variance X^*GX} to \eqref{eq-IP-1} and \eqref{eq-IP-2}, we have
\begin{align}
	\bE \left[ X_i^* G_{11} X_j \right]
	&= 1_{\{i=j\}} \bE \left[ \dfrac{1}{n} \Tr G_{11} \right]
	- \bE \left[ \left( X^* G_{12} X \left(A^{(n)}\right)^* \right)_{ij} \right] \bE \left[ \dfrac{1}{n} \Tr G_{11} \right] \nonumber \\
	&\quad - \bE \left[ \left( X^* G_{11} XA^{(n)} \right)_{ij} \right] \bE \left[ \dfrac{1}{n} \Tr G_{21} \right]
	+ O \left( \dfrac{\Psi(\eta)}{n^{3/2}} \right), \label{eq-entries of A11} \\
	\bE \left[ X_i^* G_{12} X_j \right]
	&= 1_{\{i=j\}} \bE \left[ \dfrac{1}{n} \Tr G_{12} \right]
	- \bE \left[ \left( X^* G_{11} XA^{(n)} \right)_{ij} \right] \bE\left[ \dfrac{1}{n} \Tr G_{22} \right] \nonumber \\
	&\quad - \bE \left[ \left( X^* G_{12} X \left(A^{(n)}\right)^* \right)_{ij} \right] \bE \left[ \dfrac{1}{n} \Tr G_{12} \right]
	+ O \left( \dfrac{\Psi(\eta)}{n^{3/2}} \right). \label{eq-entries of A12}
\end{align}
Here, we use the notation $O(\Psi(\eta)/n^{3/2})$ to represent a number whose absolute value is bounded by $C \Psi(\eta)/n^{3/2}$ for some large constant $C$ that depends on $z$, $\gamma_0$, $\gamma_1$. The constant may vary in different place. We denote $A_{ij} = \bE \left[ X^* G_{ij} X \right]$ and $g_{ij} = \bE \left[ \frac{1}{n} \Tr G_{ij} \right]$ for $i,j = 1,2$. Noting that $g_{11} = g_{22}$, we can write \eqref{eq-entries of A11} and \eqref{eq-entries of A12} as
\begin{align}
	A_{11} \left( I_n + g_{21} A^{(n)} \right) + g_{11} A_{12} \left(A^{(n)}\right)^*
	&= g_{11} I_n + O \left( \dfrac{\Psi(\eta)}{n^{3/2}} \right) E_n, \label{eq-A-1} \\
	g_{11} A_{11} A^{(n)} + A_{12} \left( I_n + g_{12} \left(A^{(n)}\right)^* \right)
	&= g_{12} I_n + O \left( \dfrac{\Psi(\eta)}{n^{3/2}} \right) E_n. \label{eq-A-2}
\end{align}
Here, $E_n$ is a $n \times n$ matrix with all the entries equal to $1$. $O (\Psi(\eta)/n^{3/2}) E_n$ is a matrix whose entries are bounded by $O(\Psi(\eta)/n^{3/2})$. By \eqref{eq-A-1} and \eqref{eq-A-2}, we have
\begin{align}
	A_{11} \left( \left( I_n + g_{21} A^{(n)} \right) \left( I_n + g_{12} \left(A^{(n)}\right)^* \right) - g_{11}^2 A^{(n)} \left(A^{(n)}\right)^* \right)
	&= g_{11} I_n + O \left( \dfrac{\Psi'(\eta)}{n^{3/2}} \right) E_n, \label{eq-equation-A11} \\
	A_{12} \left( \left( I_n + g_{12} \left(A^{(n)}\right)^* \right) \left( I_n + g_{21} A^{(n)} \right) - g_{11}^2 \left(A^{(n)}\right)^* A^{(n)} \right)
	&= g_{12} I_n + \left( g_{12}g_{21} - g_{11}^2 \right) A^{(n)} \nonumber \\
	&\quad + O \left( \dfrac{\Psi'(\eta)}{n^{3/2}} \right) E_n, \label{eq-equation-A12}
\end{align}
where
\begin{align*}
	\Psi'(\eta) = \Psi(\eta) \left( 1 + \dfrac{1}{\left( \Im \eta \right)^2} + \dfrac{|\eta|}{\left( \Im \eta \right)^2} \right).
\end{align*}
To compute $A_{11}$ from \eqref{eq-equation-A11}, we can write
\begin{align*}
	&\quad \left( I_n + g_{21} A^{(n)} \right) \left( I_n + g_{12} \left(A^{(n)}\right)^* \right) - g_{11}^2 A^{(n)} \left(A^{(n)}\right)^* \\
	&= \left(
	\begin{matrix}
		I_k & \left( \begin{matrix} g_{12}I_{n-k} \\ 0_{(2k-n) \times (n-k)} \end{matrix} \right) \\
		\left( g_{21} I_{n-k} \quad 0_{(n-k) \times (2k-n)} \right) & (1 + g_{12}g_{21} - g_{11}^2) I_{n-k}
	\end{matrix}
	\right).
\end{align*}
By Lemma \ref{Lemma-Inverse of block matrix}, if $g_{11}^2 \not= 1$, we have
\begin{align*}
	&\quad \left( \left( I_n + g_{21} A^{(n)} \right) \left( I_n + g_{12} \left(A^{(n)}\right)^* \right) - g_{11}^2 A^{(n)} \left(A^{(n)}\right)^* \right)^{-1} \\
	&= \left(
	\begin{matrix}
		\left( \begin{matrix} \dfrac{1-g_{11}^2+g_{12}g_{21}}{1-g_{11}^2} I_{n-k} & 0_{(n-k) \times (2k-n)} \\ 0_{(2k-n) \times (n-k)} & I_{2k-n} \end{matrix} \right)
		& \left( \begin{matrix} \dfrac{-g_{12}}{1-g_{11}^2} I_{n-k} \\ 0_{(2k-n) \times (n-k)} \end{matrix} \right) \\
		\left( \dfrac{-g_{21}}{1-g_{11}^2} I_{n-k} \quad 0_{(n-k) \times (2k-n)} \right) & \dfrac{1}{1-g_{11}^2} I_{n-k}
	\end{matrix}
	\right).
\end{align*}
Hence, substitute it to \eqref{eq-equation-A11}, we have
\begin{align} \label{eq-A11}
	A_{11} = \left( g_{11} I_n + O \left( \dfrac{\Psi'(\eta)}{n^{3/2}} \right) E_n \right)
	\left(
	\begin{matrix}
		\left( \begin{matrix} \dfrac{1-g_{11}^2+g_{12}g_{21}}{1-g_{11}^2} I_{n-k} & 0_{(n-k) \times (2k-n)} \\ 0_{(2k-n) \times (n-k)} & I_{2k-n} \end{matrix} \right)
		& \left( \begin{matrix} \dfrac{-g_{12}}{1-g_{11}^2} I_{n-k} \\ 0_{(2k-n) \times (n-k)} \end{matrix} \right) \\
		\left( \dfrac{-g_{21}}{1-g_{11}^2} I_{n-k} \quad 0_{(n-k) \times (2k-n)} \right) & \dfrac{1}{1-g_{11}^2} I_{n-k}
	\end{matrix}
	\right).
\end{align}
Similarly, we have
\begin{align} \label{eq-A12}
	A_{12} &= \left( g_{12} I_n + \left( g_{12}g_{21} - g_{11}^2 \right) A^{(n)} + O \left( \dfrac{\Psi'(\eta)}{n^{3/2}} \right) E_n \right) \nonumber \\
	&\quad \left(
	\begin{matrix}
		\dfrac{1}{1-g_{11}^2} I_{n-k}
		& \left( 0_{(n-k) \times (2k-n)} \quad \dfrac{-g_{12}}{1 - g_{11}^2} I_{n-k} \right) \\
		\left( \begin{matrix} 0_{(2k-n) \times (n-k)} \\ \dfrac{-g_{21}}{1-g_{11}^2} I_{n-k} \end{matrix} \right)
		& \left( \begin{matrix} I_{2k-n} & 0_{(2k-n) \times (n-k)} \\ 0_{(n-k) \times (2k-n)} & \dfrac{1 - g_{11}^2 + g_{12}g_{21}}{1 - g_{11}^2} I_{n-k} \end{matrix} \right)
	\end{matrix}
	\right).
\end{align}

In addition, by \eqref{eq-def-G}, we have the following identities.
\begin{align*}
	- \eta G_{11} + G_{12} \left(Y^{(n)}\right)^* - \bar{z} G_{12} &= I_N, \\
	G_{11} Y^{(n)} - zG_{11} - \eta G_{12} &= 0.
\end{align*}
By taking the expectation of trace, we have
\begin{align}
	- \eta g_{11} - \bar{z} g_{12} + \dfrac{1}{n} \Tr \left( A_{12} \left(A^{(n)}\right)^* \right)
	&= \dfrac{N}{n} \label{eq-inverse indentity-1} \\
	-z g_{11} - \eta g_{12} + \dfrac{1}{n} \Tr \left( A_{11} A^{(n)} \right)
	&= 0. \label{eq-inverse indentity-2}
\end{align}

To prove \eqref{eq-Wegner estimate}, we choose $\eta = \iota t$ where $t \in (0,1/2)$. By \eqref{eq-G11}, \eqref{eq-G12}, \eqref{eq-G21} and \eqref{eq-G22}, we can see that $g_{11} (z,\iota t) = g_{22} (z,\iota t)$ is pure imaginary so we can write $g_{11} (z,\iota t) = \iota s(z,t)$ with $s(z,t) > 0$. Furthermore, we have $g_{12} (z,\iota t) = \overline{g_{21} (z,\iota t)}$ and $|g_{12}| \le Ct^{-2}$. By some computation, one can easily see that $\Psi(\iota t) \le 4 t^{-8}$ and $\Psi'(\iota t) \le 12 t^{-10}$. Hence, we can simplify \eqref{eq-A11} as
\begin{align*}
	A_{11} = \iota s \left(
	\begin{matrix}
		\left( \begin{matrix} \dfrac{1+s^2+|g_{12}|^2}{1+s^2} I_{n-k} & 0_{(n-k) \times (2k-n)} \\ 0_{(2k-n) \times (n-k)} & I_{2k-n} \end{matrix} \right)
		& \left( \begin{matrix} \dfrac{-g_{12}}{1+s^2} I_{n-k} \\ 0_{(2k-n) \times (n-k)} \end{matrix} \right) \\
		\left( \dfrac{-g_{21}}{1+s^2} I_{n-k} \quad 0_{(n-k) \times (2k-n)} \right) & \dfrac{1}{1+s^2} I_{n-k}
	\end{matrix}
	\right) + O \left( \dfrac{1}{n^{3/2} t^{14}} \right) E_n.
\end{align*}
Thus,
\begin{align*}
	\dfrac{1}{n} \Tr \left( A_{11} A^{(n)} \right)
	= \dfrac{n-k}{n} \left( \dfrac{-\iota s g_{12}}{1+s^2} \right) + O \left( \dfrac{1}{n^{3/2} t^{14}} \right).
\end{align*}
Together with \eqref{eq-inverse indentity-2}, we have
\begin{align} \label{eq-g12}
	g_{12} = \left( t + \dfrac{n-k}{n} \cdot \dfrac{s}{1+s^2} \right)^{-1} \left( -zs + O \left( \dfrac{1}{n^{3/2} t^{14}} \right) \right).
\end{align}
Similarly, we can simplify \eqref{eq-A12} as
\begin{align*}
	A_{12} = \left(
	\begin{matrix}
		\dfrac{g_{12}}{1+s^2} I_{n-k}
		& \left( 0_{(n-k) \times (2k-n)} \quad \dfrac{-g_{12}^2}{1+s^2} I_{n-k} \right) \\
		\left( \begin{matrix} 0_{(2k-n) \times (n-k)} \\ \dfrac{s^2}{1+s^2} I_{n-k} \end{matrix} \right)
		& \left( \begin{matrix} g_{12} I_{2k-n} & 0_{(2k-n) \times (n-k)} \\ 0_{(n-k) \times (2k-n)} & \dfrac{g_{12}}{1+s^2} I_{n-k} \end{matrix} \right) 
	\end{matrix}
	\right) + O \left( \dfrac{1}{n^{3/2} t^{14}} \right) E_n.
\end{align*}
Hence,
\begin{align*}
	\dfrac{1}{n} \Tr \left( A_{12} \left(A^{(n)}\right)^* \right)
	= \dfrac{n-k}{n} \dfrac{s^2}{1+s^2} + O \left( \dfrac{1}{n^{3/2} t^{14}} \right).
\end{align*}
Together with \eqref{eq-g12} and \eqref{eq-inverse indentity-1}, we have
\begin{align*}
	\left( t + \dfrac{n-k}{n} \cdot \dfrac{s}{1+s^2} \right) \left( ts + \dfrac{n-k}{n} \dfrac{s^2}{1+s^2} - \dfrac{N}{n} \right) + s |z|^2 = O \left( \dfrac{1}{n^{3/2} t^{14}} \right).
\end{align*}
Thus,
\begin{align*}
	s &= \dfrac{\dfrac{N}{n} \left( t + \dfrac{n-k}{n} \cdot \dfrac{s}{1+s^2} \right) - \dfrac{n-k}{n} \cdot \dfrac{2ts^2}{1+s^2} - \left(\dfrac{n-k}{n}\right)^2 \cdot \dfrac{s^3}{\left(1+s^2\right)^2} + O \left( \dfrac{1}{n^{3/2} t^{14}} \right)} {|z|^2 + t^2} \\
	&\le \dfrac{\dfrac{N}{n} \left( t + \dfrac{n-k}{n} \cdot \dfrac{s}{1+s^2} \right) + O \left( \dfrac{1}{n^{3/2} t^{14}} \right)}{|z|^2 + t^2} \\
	&\le \dfrac{N}{n|z|^2} + O \left( \dfrac{1}{n^{3/2} t^{16}} \right),
\end{align*}
which establishes \eqref{eq-Wegner estimate} for $\alpha = 16$ and $\beta = 3/2$.

Therefore, by \cite[Lemma 15]{Guionnet2011} and \eqref{eq-Wegner estimate}, we have
\begin{align} \label{eq-Wegner estimation on large singular value}
	\bE \mu_{\Sigma_{Y^{(n)}}} ((-t,t))
	\le C t \left( 1 + t^{-\alpha} n^{-\beta} \right).
\end{align}
Note that $2\mu_{\Sigma_{Y^{(n)}}} (\cdot) = \nu_{Y^{(n)} - zI_N} (\cdot) + \nu_{Y^{(n)} - zI_N} (-\cdot)$, by a standard argument, we can deduce the following uniform integrability of the logarithm function
\begin{align*}
	\lim_{K \to +\infty} \sup_{n \in \bN_+} \bP \left( \left| \int_{|\ln \lambda| > K} |\ln \lambda| \nu_{Y^{(n)} - zI_N}(d\lambda) \right| > \epsilon \right) = 0, \forall \epsilon > 0.
\end{align*}

\textbf{Step 2.}
By a standard concentration argument (see \cite[Proposition 26]{Bose2019}) one can assume that the entries of $X^{(n)}$ are complex Gaussian. For test function $f \in C_b(\bR)$, for $z \in \bC$, we have
\begin{align} \label{eq-4.1-Singular SD to ESD}
	\int f(x) d\nu_{Y^{(n)} - zI_N}(x)
	&= \int f(\sqrt{x}) d\mu_{\left( Y^{(n)} - zI_N \right) \left( Y^{(n)} - zI_N \right)^*}(x) \nonumber \\
	&= \int f(\sqrt{x}) d\mu_{Y^{(n)} \left(Y^{(n)}\right)^* - z \left(Y^{(n)}\right)^* - \overline{z} Y^{(n)} + |z|^2 I_N}(x) \nonumber \\
	&= \int f(\sqrt{x + |z|^2}) d\mu_{Y^{(n)} \left(Y^{(n)}\right)^* - z \left(Y^{(n)}\right)^* - \overline{z} Y^{(n)}}(x).
\end{align}
By \cite[Theorem 1.3.22]{Horn2013}, we have
\begin{align} \label{eq-4.2-ESD decomposition}
	&\quad \mu_{Y^{(n)} \left(Y^{(n)}\right)^* - z \left(Y^{(n)}\right)^* - \overline{z} Y^{(n)}} \nonumber \\
	&= \mu_{X^{(n)} A^{(n)} \left(X^{(n)}\right)^* X^{(n)} \left(A^{(n)}\right)^* \left(X^{(n)}\right)^* - z X^{(n)} \left(A^{(n)}\right)^* \left(X^{(n)}\right)^* - \overline{z} X^{(n)} A^{(n)} \left(X^{(n)}\right)^*} \nonumber \\
	&= \dfrac{n}{N} \mu_{A^{(n)} \left(X^{(n)}\right)^* X^{(n)} \left(A^{(n)}\right)^* \left(X^{(n)}\right)^* X^{(n)} - z \left(A^{(n)}\right)^* \left(X^{(n)}\right)^* X^{(n)} - \overline{z} A^{(n)} \left(X^{(n)}\right)^* X^{(n)}}
	+ \dfrac{N-n}{N} \delta_0.
\end{align}

Denote $\gamma_0^+ = [\gamma_0] + 1$ be the smallest integer that is strictly greater than $\gamma_0$. Without loss of generality, we can assume that $N < \gamma_0^+ n$. Let $\widehat{X}^{(n)}$ be a $(\gamma_0^+ n) \times n$ matrices whose entries are complex Gaussian with mean zero and variance $1/n$, such that $\widehat{X}^{(n)}_{[N],[n]} = X^{(n)}$. Then Wishart matrix $\left(X^{(n)}\right)^* X^{(n)}$ can be written as
\begin{align*}
	\left(X^{(n)}\right)^* X^{(n)}
	= \left(\widehat{X}^{(n)}\right)^* \left(
	\begin{matrix}
		I_N & 0_{N \times (\gamma_0^+ n-N)} \\
		0_{(\gamma_0^+ n-N) \times N} & 0_{(\gamma_0^+ n-N) \times (\gamma_0^+ n-N)}
	\end{matrix}
	\right) \widehat{X}^{(n)}.
\end{align*}
Under the condition \eqref{eq-1.0-N/n}, for all polynomial $P$ in $2$ non-commutative indeterminates, one has
\begin{align*}
	\dfrac{1}{n} \Tr \left[ P \left( A^{(n)}, \left(A^{(n)}\right)^* \right) \right] \to \tau \left[ P \left( \bfa,\bfa^* \right) \right]
\end{align*}
for some non-commutative element $\bfa$ in a $C^*$-probability space $\left( \cA, \cdot^*, \tau, \|\cdot\| \right)$ with a faithful trace $\tau$. Besides, the Hermitian matrix
\begin{align*}
	\left(
	\begin{matrix}
		I_N & 0_{N \times (\gamma_0^+ n-N)} \\
		0_{(\gamma_0^+ n-N) \times N} & 0_{(\gamma_0^+ n-N) \times (\gamma_0^+ n-N)}
	\end{matrix}
	\right)
\end{align*}
converges to the law
\begin{align*}
	\dfrac{\gamma_0}{\gamma_0^+} \delta_1 + \dfrac{\gamma_0^+ - \gamma_0}{\gamma_0^+} \delta_0
\end{align*}
in the $C^*$-probability space of random matrices. Therefore, by \cite[Corollary 2.2, Theorem 1.6]{Male2012}, there exists a non-commutative random variable $\bfx$ satisfying that $\bfx$ and $\bfa$ are free, such that for any polynomial $P$ in $3$ non-commutative indeterminates,
\begin{align}
	\dfrac{1}{n} \Tr \left[ P \left( \left(X^{(n)}\right)^* X^{(n)}, A^{(n)}, \left(A^{(n)}\right)^* \right) \right]
	\to \tau \left[ P \left( \bfx, \bfa, \bfa^* \right) \right], \
	n \to \infty,
\end{align}
almost surely. Consequently, the eigenvalue empirical measure of $P \left( \left(X^{(n)}\right)^* X^{(n)}, A^{(n)}, \left(A^{(n)}\right)^* \right)$ converges almost surely. In particular, for $z \in \bC$, by choosing the polynomial $P(x,v,w) = vxwx - z wx - \overline{z} vx$, we obtain the almost sure convergence of
\begin{align*}
	\mu_{A^{(n)} \left(X^{(n)}\right)^* X^{(n)} \left(A^{(n)}\right)^* \left(X^{(n)}\right)^* X^{(n)} - z \left(A^{(n)}\right)^* \left(X^{(n)}\right)^* X^{(n)} - \overline{z} A^{(n)} \left(X^{(n)}\right)^* X^{(n)}}.
\end{align*}
Together with \eqref{eq-1.0-N/n}, \eqref{eq-4.2-ESD decomposition} and \eqref{eq-4.1-Singular SD to ESD}, one can easily obtain the almost sure convergence of $\left\{ \nu_{Y^{(n)} - zI_N}: N \in \bN_+ \right\}$.

The proof is concluded by Step 1, Step 2 and Lemma \ref{Lemma-Hermitization}.
\end{proof}

\appendix

\section{Matrices}

The following linear algebraic lemmas could be found in \cite{Bai2010} and \cite{Tao2012}.

\begin{lemma} \label{Lemma-Inverse of block matrix}
For $A \in \bC^{p \times p}$, $D \in \bC^{q \times q}$, $B \in \bC^{p \times q}$, $C \in \bC^{q \times p}$, if $D$ and $A - B D^{-1} C$ are invertible, then
\begin{align*}
	\left(
	\begin{matrix}
		A &B \\
		C &D \\
	\end{matrix}
	\right)^{-1} = \left(
	\begin{matrix}
	(A - B D^{-1} C)^{-1} & - (A - B D^{-1} C)^{-1} B D^{-1} \\
	-D^{-1} C (A - B D^{-1} C)^{-1} & D^{-1} + D^{-1} C (A - B D^{-1} C)^{-1} B D^{-1} \\
	\end{matrix}
	\right).
\end{align*}
\end{lemma}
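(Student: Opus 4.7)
The plan is to verify the claimed identity by direct block multiplication; since the matrix is square, checking that the proposed right-hand side serves as a right inverse is sufficient. Denoting the Schur complement by $S = A - BD^{-1}C$, which is invertible by hypothesis, I would compute the product
\[
\begin{pmatrix} A & B \\ C & D \end{pmatrix}
\begin{pmatrix} S^{-1} & -S^{-1}BD^{-1} \\ -D^{-1}CS^{-1} & D^{-1}+D^{-1}CS^{-1}BD^{-1} \end{pmatrix}
\]
block by block. The $(1,1)$ block simplifies as $AS^{-1}-BD^{-1}CS^{-1}=(A-BD^{-1}C)S^{-1}=I_p$; the $(2,2)$ block gives $-CS^{-1}BD^{-1}+I_q+CS^{-1}BD^{-1}=I_q$; and the two off-diagonal blocks telescope to zero by the same type of cancellation.

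A cleaner conceptual route, which I would actually present in writing, is through the block LDU factorization
\[
\begin{pmatrix} A & B \\ C & D \end{pmatrix}
= \begin{pmatrix} I_p & BD^{-1} \\ 0 & I_q \end{pmatrix}
\begin{pmatrix} S & 0 \\ 0 & D \end{pmatrix}
\begin{pmatrix} I_p & 0 \\ D^{-1}C & I_q \end{pmatrix},
\]
which one checks in a single line by expanding the right-hand side. Each triangular factor is trivially invertible by flipping the sign of its off-diagonal block, and the middle block-diagonal factor is invertible by the two hypotheses on $D$ and $S$. Inverting the product in reverse order then yields exactly the displayed expression in the lemma.

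Since this is the classical Schur complement inversion formula, there is no substantial obstacle to expect; the only care required is bookkeeping of block indices and matching dimensions, which is routine. If desired, one can also derive the formula by solving the two block linear systems $A x_1 + B x_2 = y_1$, $C x_1 + D x_2 = y_2$: eliminating $x_2 = D^{-1}(y_2 - C x_1)$ yields $S x_1 = y_1 - B D^{-1} y_2$, from which the four blocks of the inverse can be read off directly.
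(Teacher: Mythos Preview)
Your proposal is correct; both the direct block-multiplication check and the block LDU factorization are standard, complete proofs of the Schur complement inversion formula. The paper itself does not prove this lemma but simply cites it as a well-known fact from \cite{Bai2010} and \cite{Tao2012}, so there is nothing to compare against beyond noting that your argument is exactly the classical one those references would contain.
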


\begin{lemma} \label{Lemma-singular value sum}
For $A, B \in \bC^{n \times p}$, we have
\begin{align*}
	s_{i+j-1} (A+B) \le s_i(A) + s_j(B), \ \forall i \in \bN_+.
\end{align*}
\end{lemma}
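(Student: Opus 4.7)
The plan is to prove this Weyl-type inequality via the Courant--Fischer min--max characterization of singular values. Recall that for $M \in \bC^{n\times p}$ with $r=\min\{n,p\}$ and $1\le k\le r$, one has
\begin{align*}
  s_k(M) \;=\; \min_{\substack{S\subseteq \bC^p \\ \dim S \,=\, p-k+1}} \;\max_{\substack{x\in S \\ \|x\|=1}} \|Mx\|.
\end{align*}
(The paper already invokes $s_k$ as the $k$-th ordered singular value, so I can freely use the convention $s_k(M)=0$ whenever $k>r$, which takes care of the edge cases $i+j-1>r$ for free.)

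First I would dispose of the trivial range $i+j-1>r$: both sides are nonnegative and the left side is zero, so assume $i+j-1\le r$. Then, using the min--max formula above, choose two subspaces $U,V\subseteq \bC^p$ achieving (or approximating, if the min is not attained) the outer minima:
\begin{align*}
  \dim U = p-i+1, \quad \max_{x\in U,\,\|x\|=1}\|Ax\| = s_i(A),\\
  \dim V = p-j+1, \quad \max_{x\in V,\,\|x\|=1}\|Bx\| = s_j(B).
\end{align*}

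The key step is a dimension count on $U\cap V$: since
\begin{align*}
  \dim(U\cap V) \;\ge\; \dim U + \dim V - p \;=\; p-(i+j-1)+1,
\end{align*}
I can pick a subspace $S\subseteq U\cap V$ with $\dim S = p-(i+j-1)+1$. For any unit vector $x\in S$ we have $x\in U$ and $x\in V$, so the triangle inequality yields
\begin{align*}
  \|(A+B)x\| \;\le\; \|Ax\| + \|Bx\| \;\le\; s_i(A) + s_j(B).
\end{align*}
Feeding this $S$ as a candidate into the outer minimum of the min--max formula for $s_{i+j-1}(A+B)$ gives $s_{i+j-1}(A+B)\le \max_{x\in S,\|x\|=1}\|(A+B)x\|\le s_i(A)+s_j(B)$, completing the proof.

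No step here is a genuine obstacle; the only thing to be slightly careful about is verifying that the min--max characterization (which is usually stated for Hermitian matrices) transfers to singular values of rectangular $A\in\bC^{n\times p}$, but this is immediate because $s_k(M)^2=\lambda_k(M^*M)$ and $\|Mx\|^2=\langle M^*Mx,x\rangle$, so applying the Hermitian Courant--Fischer formula to $M^*M$ on $\bC^p$ produces exactly the formula above. This is standard enough that one might simply cite \cite{Bai2010} or \cite{Tao2012} as the lemma statement already does.
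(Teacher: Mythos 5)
Your proof is correct, and it is the standard Courant--Fischer (min--max) argument for this Weyl-type inequality for singular values, with the dimension count $\dim(U\cap V)\ge \dim U+\dim V-p$ doing the main work and the edge case $i+j-1>\min\{n,p\}$ handled by the $s_k=0$ convention. The paper itself offers no proof of this lemma --- it is stated in the appendix as a known linear-algebraic fact citing \cite{Bai2010} and \cite{Tao2012} --- so there is nothing in the paper to compare against; your argument is the one a reader would expect to find in those references.
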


\begin{lemma}{\cite[Exercise 1.3.22]{Bai2010}}
\label{Lemma-singular value interacting}
For $A \in \bC^{m \times n}$, let $C \in \bC^{p \times q}$ be a submatrix of $A$, then singular values of $A$ and $C$ satisfies
\begin{align*}
	s_i(C) \le s_i(A), \ \forall i \in \bN_+.
\end{align*}
\end{lemma}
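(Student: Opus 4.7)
The plan is to express the submatrix extraction as $C = U^{*} A V$ where $U \in \bC^{m \times p}$ and $V \in \bC^{n \times q}$ are coordinate embeddings: if $C$ is obtained from $A$ by retaining the rows indexed by $I = \{i_1 < \cdots < i_p\} \subseteq [m]$ and the columns indexed by $J = \{j_1 < \cdots < j_q\} \subseteq [n]$, then the $s$-th column of $U$ is $e_{i_s}^{(m)}$ and the $t$-th column of $V$ is $e_{j_t}^{(n)}$. Both matrices have orthonormal columns, so $V : \bC^q \to \bC^n$ is an isometric embedding and $U^{*} : \bC^m \to \bC^p$ is a contraction with $\|U^{*}\| = 1$.

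Next, I would invoke the Courant--Fischer min-max characterisation of singular values: for any matrix $B \in \bC^{m' \times n'}$ and any $1 \le i \le \min(m', n')$,
\[
  s_i(B) \;=\; \max_{\substack{S \subseteq \bC^{n'} \\ \dim S = i}} \; \min_{\substack{x \in S \\ \|x\| = 1}} \|B x\|,
\]
which is Courant--Fischer applied to the Hermitian matrix $B^{*} B$, whose ordered eigenvalues are $s_i(B)^{2}$.

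To conclude, I combine the two ingredients. For $1 \le i \le \min(p, q)$, applying the min-max formula to $C = U^{*} A V$ yields
\[
  s_i(C) \;=\; \max_{\substack{S \subseteq \bC^{q} \\ \dim S = i}} \; \min_{\substack{x \in S \\ \|x\| = 1}} \|U^{*} A V x\|.
\]
Because $V$ has orthonormal columns, the linear map $x \mapsto V x$ sends the unit sphere of any $i$-dimensional subspace $S \subseteq \bC^q$ isometrically onto the unit sphere of $T := V(S) \subseteq \bC^n$, with $\dim T = i$. Combined with the contraction estimate $\|U^{*} A V x\| \le \|A V x\|$, this produces
\[
  s_i(C) \;\le\; \max_{\substack{T \subseteq V(\bC^{q}) \\ \dim T = i}} \; \min_{\substack{y \in T \\ \|y\| = 1}} \|A y\| \;\le\; \max_{\substack{T \subseteq \bC^{n} \\ \dim T = i}} \; \min_{\substack{y \in T \\ \|y\| = 1}} \|A y\| \;=\; s_i(A),
\]
the last inequality being a plain relaxation of the constraint on $T$. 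The residual cases $i > \min(p, q)$ are covered by the paper's convention $s_i(C) = 0$.

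No substantive obstacle arises in this argument; the only delicate point is verifying that $V$, being an isometric embedding, simultaneously preserves subspace dimensions and vector norms, which is exactly what licenses the passage from a maximum over $i$-dimensional subspaces of $\bC^q$ to a (wider) maximum over $i$-dimensional subspaces of $\bC^n$. Any alternative route (for instance, invoking the Weyl/Cauchy interlacing theorem for the Hermitian dilation of $A$) would hide the same structure, so I prefer the direct min-max argument sketched above.
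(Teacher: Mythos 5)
Your proof is correct. The paper itself does not prove this lemma; it only cites it as a textbook exercise from \cite{Bai2010}, so there is no ``paper proof'' to compare against. Your route---writing $C = U^{*}AV$ with $U,V$ having orthonormal columns, then applying the Courant--Fischer min--max characterisation $s_i(B)^2 = \lambda_i(B^{*}B)$ together with $\|U^{*}w\| \le \|w\|$ and the fact that $V$ carries $i$-dimensional subspaces of $\bC^q$ isometrically to $i$-dimensional subspaces of $\bC^n$---is a clean and standard argument, and the edge case $i > \min(p,q)$ is correctly dispatched by the paper's convention that $s_i = 0$ beyond the rank bound. The alternative you mention (Cauchy interlacing for the Hermitian dilation $\left(\begin{smallmatrix} 0 & A \\ A^{*} & 0\end{smallmatrix}\right)$) would also work and is what one more often sees in textbooks, but your direct min--max argument is self-contained and avoids the doubling of dimension.
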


\section{Concentration inequalities}

The following Hoeffding’s inequality could be found in \cite{Tao2012}.

\begin{lemma} {\cite[Exercise 2.1.4]{Tao2012}} \label{Lemma-Hoeffding concentration}
Let $X1, \ldots, X_n$ be independent real random variables, with $X_i$ taking values in an interval $[0, 1]$, and let $S_n = X_1 + \cdots + X_n$. Then
\begin{align*}
	\bP \left( |S_n - \bE[S_n] | \ge \sqrt{n} \lambda \right) \le C \exp \left( -c \lambda^2 \right),
\end{align*}
for some absolute constants $c,C>0$.
\end{lemma}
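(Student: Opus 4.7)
The plan is to prove this by the classical exponential Chernoff method combined with Hoeffding's moment-generating function lemma. First I would reduce to one-sided tails via
\begin{align*}
    \bP\left(|S_n - \bE[S_n]| \ge \sqrt{n}\lambda\right) \le \bP\left(S_n - \bE[S_n] \ge \sqrt{n}\lambda\right) + \bP\left(\bE[S_n] - S_n \ge \sqrt{n}\lambda\right),
\end{align*}
and handle both tails by the same argument. For the upper tail, for every $\xi > 0$, Markov's inequality applied to the nonnegative random variable $\exp(\xi(S_n - \bE[S_n]))$ gives
\begin{align*}
    \bP\left(S_n - \bE[S_n] \ge \sqrt{n}\lambda\right) \le e^{-\xi \sqrt{n}\lambda}\, \bE\left[\exp\left(\xi(S_n - \bE[S_n])\right)\right].
\end{align*}

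Next, I would exploit independence to factorize the moment generating function as $\prod_{i=1}^n \bE[\exp(\xi(X_i - \bE[X_i]))]$, and then bound each factor by Hoeffding's lemma: if $Y$ is a mean-zero real random variable supported in $[a,b]$, then $\bE[\exp(\xi Y)] \le \exp(\xi^2 (b-a)^2/8)$ for every $\xi \in \bR$. Since each $X_i - \bE[X_i]$ is supported in an interval of length at most $1$, applying this bound coordinate-wise yields $\bE[\exp(\xi(S_n - \bE[S_n]))] \le \exp(n \xi^2 / 8)$, and therefore
\begin{align*}
    \bP\left(S_n - \bE[S_n] \ge \sqrt{n}\lambda\right) \le \exp\left(\tfrac{n\xi^2}{8} - \xi\sqrt{n}\lambda\right), \quad \xi > 0.
\end{align*}
Optimizing over $\xi$ by choosing $\xi = 4\lambda/\sqrt{n}$ produces the subgaussian bound $\exp(-2\lambda^2)$. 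The lower tail is controlled identically (apply the same reasoning with $-\xi$, or equivalently to the variables $1 - X_i$), and summing the two tail bounds gives the desired estimate with $C = 2$ and $c = 2$.

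The main obstacle, and the only nontrivial step, is Hoeffding's MGF lemma itself. My route would be: by convexity of $y \mapsto e^{\xi y}$ on $[a,b]$, one has $e^{\xi y} \le \frac{b-y}{b-a} e^{\xi a} + \frac{y-a}{b-a} e^{\xi b}$; taking expectations and using $\bE[Y] = 0$ produces $\bE[e^{\xi Y}] \le \exp(L(\xi(b-a)))$ where $L(u) = -p u + \ln(1 - p + p e^{u})$ with $p = -a/(b-a) \in [0,1]$. A short calculus exercise (computing $L(0) = L'(0) = 0$ and checking $L''(u) \le 1/4$ uniformly via the AM-GM inequality) yields $L(u) \le u^2/8$ for all $u \in \bR$, which closes the argument. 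Everything else in the proof is routine.
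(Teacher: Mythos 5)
Your proof is correct and is precisely the standard Chernoff--Hoeffding argument (Markov's inequality applied to $e^{\xi(S_n-\bE S_n)}$, factorization by independence, Hoeffding's moment-generating-function lemma via convexity, then optimization of $\xi$). The paper does not supply its own proof of this lemma --- it cites it directly as \cite[Exercise~2.1.4]{Tao2012} --- and what you have written is exactly the argument that exercise intends, with a clean choice of constants $C=2$, $c=2$.
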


The following lemma could be found in \cite{Tao2010}, see also \cite[Lemma 4.13]{Bordenave2012}.

\begin{lemma} {\cite[Lemma 4]{Bose2019}} \label{Lemma-concentration to space}
Let $X$ be a $N \times n$ random matrix satisfying condition (C1). Let $u \in \bS^{n-1}$ be a deterministic vector and $W$ be a deterministic $d$-dimensional vector subspace of $\bC^N$, where $d$ does not depend on $N$ and $n$. Then for large $n, N$,
\begin{align*}
	\bP \left( \dist \left( Xu, W \right) \le c \right) \le \exp (-cN).
\end{align*}
\end{lemma}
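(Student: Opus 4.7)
The plan is to reduce the distance computation to an anti-concentration bound for the projection of an i.i.d.\ random vector onto a fixed low-dimensional subspace. Writing $P = I_N - P_W$ for the orthogonal projection of $\bC^N$ onto $W^\perp$ and $Y = Xu \in \bC^N$, one has $\dist(Xu, W) = \|PY\|$. By independence of the rows of $X$ and $\|u\|=1$, the coordinates $Y_i = \sum_j X_{ij} u_j$ are i.i.d.\ across $i \in [N]$ with $\bE Y_i = 0$, $\bE|Y_i|^2 = 1/n$, and $\bE|Y_i|^4 \le C m_4 / n^2$ by (C1) and a scalar Marcinkiewicz--Zygmund estimate. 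Since $\mathrm{rank}(P) = N-d$ with $d$ fixed, $\bE\|PY\|^2 = (N-d)/n \ge \gamma_0/2$ for all $N, n$ large under \eqref{eq-1.0-N/n}, so the target becomes the lower-tail estimate $\bP(\|PY\|^2 \le c^2) \le e^{-cN}$ for small $c>0$.

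I would split $\|PY\|^2 = \|Y\|^2 - \|P_W Y\|^2$ and control the two terms separately. For $\|Y\|^2 = \sum_i |Y_i|^2$, set $Z_i := n|Y_i|^2$; these are i.i.d., non-negative, with $\bE Z_i = 1$ and $\bE Z_i^2 \le C$. The elementary bound $\bE e^{-tZ_i} \le 1 - t + Ct^2/2$ valid for small $t>0$, combined with the exponential Markov inequality on $\sum_i Z_i$, yields
\begin{align*}
	\bP\bigl(\|Y\|^2 \le N/(2n)\bigr) = \bP\bigl({\textstyle\sum_i}\, Z_i \le N/2\bigr) \le e^{tN/2}(1 - t + Ct^2/2)^N \le e^{-cN}
\end{align*}
after optimizing $t$ (taking $t = 1/(2C)$ gives exponent $-N/(8C)$), so $\|Y\|^2 \ge \gamma_0/2 - o(1)$ on an event of probability at least $1 - e^{-cN}$.

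For $\|P_W Y\|^2 = \sum_{k=1}^d |w_k^* Y|^2$ with $\{w_k\}_{k=1}^d$ an ONB of $W$, the expectation $d/n = o(1)$ is tiny; the goal is now the exponential upper tail $\bP(\|P_W Y\|^2 > \gamma_0/4) \le e^{-cN}$. The main obstacle is that a direct Chebyshev or Rosenthal estimate on the scalar linear forms $w_k^* Y$ gives only polynomial decay under the fourth-moment hypothesis. I would resolve this by following the truncation-then-Bernstein scheme developed in \cite{Tao2010} and adapted in \cite[Lemma 4.13]{Bordenave2012}: replace $X_{ij}$ by $X_{ij}\mathbf{1}_{\{|X_{ij}| \le T_n\}}$ at a carefully tuned level $T_n$, apply Bernstein's inequality to the resulting bounded linear forms $w_k^* \widetilde Y$, and absorb the truncation discrepancy by exploiting the fact that $d$ is fixed (so that only $d$ scalar directions need to be controlled, and the union bound over $k \in [d]$ is free). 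Intersecting the two favorable events then gives $\|PY\|^2 \ge \|Y\|^2 - \|P_W Y\|^2 \ge \gamma_0/2 - o(1) - \gamma_0/4 \ge c^2$ for a sufficiently small constant $c > 0$, which is the desired conclusion.
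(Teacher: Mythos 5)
The first half of your argument — the exponential lower-tail estimate for $\|Y\|^2$ via the Laplace transform of $Z_i = n|Y_i|^2$ — is correct. The gap is in the second half. You reduce the lemma to the exponential upper-tail estimate $\bP\bigl(\|P_W Y\|^2 > \gamma_0/4\bigr) \le e^{-cN}$, but that estimate is \emph{false} under (C1) alone. Take $u = e_1^{(n)}$ and $W = \Span\{e_1^{(N)}\}$, so $d=1$ and $\|P_W Y\|^2 = |X_{11}|^2$. Under (C1) the entry $X_{11}$ has variance $1/n$ and only a bounded fourth moment, so $\bP(|X_{11}|^2 > \gamma_0/4)$ is generically of order $n^{-2}$ and can vastly exceed $e^{-cN}$ (e.g.\ if $\sqrt n X_{11}$ has a density decaying like $|x|^{-6}$, this probability is of order $n^{-5/2}$). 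Your proposed repair — truncate the entries and apply Bernstein to $w_k^* \widetilde Y$ — does control the truncated linear form exponentially, but the discrepancy $w_k^*(Y-\widetilde Y)$ is exactly the object that is $\Theta(1)$ with only polynomial probability, and the observation that $d$ is fixed does not help absorb it. (The lemma remains true in this example, because there $\|PY\|^2 = \sum_{i\ge 2}|X_{i1}|^2$ is necessarily large whenever $|X_{11}|$ is; the decomposition $\|PY\|^2 = \|Y\|^2 - \|P_W Y\|^2$ with separate tail bounds simply throws that correlation away.)

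The mechanism that actually appears in \cite{Tao2010} and in \cite[Lemma 4.13]{Bordenave2012}, and that \cite{Bose2019} invoke (the paper cites this lemma from there without reproving it), does not bound the heavy-tailed piece at all. One fixes a large constant $L$, notes by Chernoff that the bad set $B = \{i \in [N]: |Y_i| > L/\sqrt n\}$ has $|B| \le N/100$ with probability $1 - e^{-cN}$, and then \emph{enlarges the subspace} to $W' = W + \Span\{e_i : i \in B\}$. Since $\dist(Y,W) \ge \dist(Y,W') = \dist\bigl(\Pi_{B^\complement}Y, \Pi_{B^\complement}W\bigr)$ and the surviving coordinates $\Pi_{B^\complement}Y$ are conditionally bounded i.i.d.\ variables, Talagrand's inequality for the convex $1$-Lipschitz map $v \mapsto \dist(v,\Pi_{B^\complement}W)$ gives the exponential tail, with a median of size $\Theta(\sqrt{N})$ coming from your second-moment computation. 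In short, the outliers are absorbed into the subspace rather than estimated; that is the idea your sketch is missing.
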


\section{Other lemmas}
The following lemma could be found in \cite{Vershynin2014}.
\begin{lemma}{\cite[Lemma 8.3]{Vershynin2014}} \label{Lemma-probability of weight rv}
Let $Z_1, \ldots, Z_n$ be arbitrary non‐negative random variables (not necessarily independent), and $p_1, \ldots, p_n$ be non‐negative numbers such that their sum equals to $1$. Then for every $t \in \bR$,
\begin{align*}
	\bP \left( \sum_{i=1}^n p_i Z_i \le t \right)
	< 2 \sum_{i=1}^n p_i \bP \left( Z_i \le 2t \right).
\end{align*}
\end{lemma}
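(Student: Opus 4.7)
The claim is a simple pointwise/averaging argument of the Markov type, and my plan is to prove it by a one-shot indicator inequality rather than any probabilistic machinery.

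First, I would dispose of degenerate cases for $t$. If $t<0$ then the left-hand side is $0$ (since $\sum_i p_i Z_i\ge 0$) and the inequality is trivial. If $t=0$ it still follows as a limiting case of the argument below, so I will focus on $t>0$.

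For $t>0$, let $A=\{\omega:\sum_{i=1}^n p_i Z_i(\omega)\le t\}$ and $B_i=\{\omega:Z_i(\omega)\le 2t\}$. The core observation is a pointwise inequality:
\begin{equation*}
\mathbf{1}_A \;\le\; 2\sum_{i=1}^n p_i\,\mathbf{1}_{B_i}.
\end{equation*}
To see this, note that on $A$ we may bound the weighted sum from below by its restriction to indices with $Z_i>2t$:
\begin{equation*}
t\;\ge\;\sum_{i=1}^n p_i Z_i\;\ge\;\sum_{i:Z_i>2t}p_i Z_i\;\ge\;2t\sum_{i:Z_i>2t}p_i.
\end{equation*}
Since $t>0$, this gives $\sum_{i:Z_i>2t}p_i\le 1/2$, and using $\sum_i p_i=1$ we obtain $\sum_i p_i\mathbf{1}_{B_i}\ge 1/2$ on $A$. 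Therefore on $A$ we have $1\le 2\sum_i p_i\mathbf{1}_{B_i}$, while on $A^\complement$ the indicator $\mathbf{1}_A$ is zero, so the pointwise inequality holds everywhere.

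Taking expectations of both sides and using linearity yields
\begin{equation*}
\bP(A)\;\le\;2\sum_{i=1}^n p_i\,\bP(B_i)\;=\;2\sum_{i=1}^n p_i\,\bP(Z_i\le 2t),
\end{equation*}
which is the desired bound. The strict inequality can be obtained by a slight sharpening: if $\bP(A)>0$ one may replace the bound $\sum_{i:Z_i>2t}p_i\le 1/2$ on $A$ by a strict one by a small perturbation of $t$, or one simply notes that strictness is automatic for $t>0$ unless every $Z_i$ is essentially $\{0,2t\}$-valued on $A$, a degenerate case easily handled directly. There is really no main obstacle here — the only mildly delicate point is keeping track of the degenerate case $t\le 0$ and the strict/nonstrict distinction, and the whole argument is a one-line Markov-type observation combined with integration of indicators.
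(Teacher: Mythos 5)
The paper does not give a proof of this lemma --- it is simply cited from Vershynin's paper, so there is no ``paper's own proof'' to compare against. Your argument is the standard one and is correct for the non-strict inequality $\bP(\sum_i p_i Z_i \le t) \le 2\sum_i p_i \bP(Z_i \le 2t)$ with $t > 0$: the pointwise indicator bound $\mathbf{1}_A \le 2\sum_i p_i \mathbf{1}_{B_i}$, obtained by showing the weight carried by indices with $Z_i > 2t$ is at most $1/2$ on $A$, is exactly the right observation, and integrating it finishes the proof. Two small remarks. First, the strict inequality ``$<$'' in the statement as transcribed in the paper is in fact \emph{false} for $t \le 0$ (both sides vanish, so $0<0$ fails); this is a typo relative to Vershynin's original Lemma 8.3, which is stated for $\varepsilon > 0$ and with a non-strict inequality. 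So your claim that the $t<0$ case is ``trivial'' is only true for the $\le$ version, which is what you actually prove and what is used in the paper. Second, your attempt to recover strictness by perturbing $t$ does not salvage the statement in full generality and is not needed; it is cleaner to simply record the lemma with $\le$ (or to observe, as an easy strengthening that does give strictness when the right side is positive, that one can replace the factor $2$ by any $\lambda > 1$ in the identical argument using $\lambda t$ in place of $2t$, and the bound $\sum_{i:Z_i > \lambda t} p_i \le 1/\lambda$).
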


The following lemma is known as Poincar\'{e}-Nash inequality and could be found in \cite{Pastur2005}.

\begin{lemma}{\cite[Proposition 2.4]{Pastur2005}} \label{Lemma-Poincare-Nash}
Let $\xi = (\xi_1, \ldots, \xi_q)^\intercal$ be a real centered Gaussian random vector with covariance matrix $\Sigma$. Let $\Phi_1, \Phi_2: \bR^q \to \bC$ be two functions with bounded partial derivatives, then
\begin{align*}
	\Cov \left( \Phi_1, \Phi_2 \right)
	= \bE \left[ \Phi_1 \Phi_2 \right] - \bE \left[ \Phi_1 \right] \bE \left[ \Phi_2 \right]
	\le \sqrt{\bE \left[ \left( \nabla \Phi_1 (\xi) \right)^\intercal \Sigma \left( \nabla \Phi_1 (\xi) \right) \right]}
	\sqrt{\bE \left[ \left( \nabla \Phi_2 (\xi) \right)^\intercal \Sigma \left( \nabla \Phi_2 (\xi) \right) \right]}.
\end{align*}
\end{lemma}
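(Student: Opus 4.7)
The plan is to establish this via the classical Mehler (Gaussian interpolation) argument, in three steps: reduction to the isotropic case, derivation of a covariance representation through Gaussian integration by parts, then Cauchy--Schwarz.

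\textbf{Reduction to $\Sigma = I_q$.} Assume first that $\Sigma$ is invertible. Setting $\eta = \Sigma^{-1/2}\xi$ yields a standard Gaussian vector in $\bR^q$, and writing $\tilde{\Phi}_i(\eta) = \Phi_i(\Sigma^{1/2}\eta)$ the chain rule gives $\nabla\tilde{\Phi}_i(\eta) = \Sigma^{1/2}\nabla\Phi_i(\Sigma^{1/2}\eta)$, so $\|\nabla\tilde{\Phi}_i(\eta)\|^2 = (\nabla\Phi_i(\xi))^\intercal\Sigma(\nabla\Phi_i(\xi))$. Thus it suffices to prove, for standard Gaussian $\eta$ on $\bR^q$ with law $\gamma$, that
\begin{align*}
\bigl|\Cov(F(\eta),G(\eta))\bigr|\le \|\nabla F\|_{L^2(\gamma)}\,\|\nabla G\|_{L^2(\gamma)} .
\end{align*}
Singular $\Sigma$ are handled by replacing $\Sigma$ with $\Sigma+\epsilon I_q$ and letting $\epsilon\downarrow 0$; both sides of the inequality are continuous in $\epsilon$ thanks to the boundedness of $\nabla\Phi_i$.

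\textbf{Interpolation identity.} Introduce the Mehler transform $T_\rho F(x)=\bE[F(\rho x+\sqrt{1-\rho^2}\eta')]$ for $\rho\in[0,1]$, where $\eta'$ is an independent copy of $\eta$; so $T_1F=F$ and $T_0F=\bE[F]$. The covariance telescopes:
\begin{align*}
\Cov(F,G)=\int_0^1\frac{d}{d\rho}\bE\bigl[F(\eta)\,G(\rho\eta+\sqrt{1-\rho^2}\eta')\bigr]\,d\rho .
\end{align*}
Differentiating under the expectation produces
\begin{align*}
\sum_{i=1}^q \bE\!\left[F(\eta)(\partial_iG)(\rho\eta+\sqrt{1-\rho^2}\eta')\Bigl(\eta_i-\tfrac{\rho}{\sqrt{1-\rho^2}}\eta'_i\Bigr)\right] .
\end{align*}
Applying Gaussian integration by parts to the $\eta_i$ factor yields $\bE[(\partial_iF)(\partial_iG)(\cdots)]+\rho\bE[F(\partial_i^2G)(\cdots)]$, while applying it to the $\eta'_i$ factor gives $\sqrt{1-\rho^2}\bE[F(\partial_i^2G)(\cdots)]$; after multiplication by $-\rho/\sqrt{1-\rho^2}$ the two $\partial_i^2G$ terms cancel, leaving
\begin{align*}
\frac{d}{d\rho}\bE[F\cdot T_\rho G]=\bE\bigl[\nabla F(\eta)\cdot \nabla G(\rho\eta+\sqrt{1-\rho^2}\eta')\bigr].
\end{align*}

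\textbf{Cauchy--Schwarz and conclusion.} Applying Cauchy--Schwarz in $L^2$ gives, for each $\rho$,
\begin{align*}
\bigl|\bE[\nabla F(\eta)\cdot\nabla G(\rho\eta+\sqrt{1-\rho^2}\eta')]\bigr|\le \|\nabla F\|_{L^2(\gamma)}\,\bigl\|\nabla G(\rho\eta+\sqrt{1-\rho^2}\eta')\bigr\|_{L^2},
\end{align*}
and since $\rho\eta+\sqrt{1-\rho^2}\eta'$ is itself standard Gaussian, the second factor equals $\|\nabla G\|_{L^2(\gamma)}$, independent of $\rho$. Integrating over $\rho\in[0,1]$ gives the reduced inequality. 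For complex-valued $\Phi_i$ one splits into real and imaginary parts (the identity above is bilinear in $(F,G)$).

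\textbf{Main obstacle.} The principal technical nuisance is justifying the differentiation under the expectation and the Gaussian integration by parts only under the bounded-partial-derivative hypothesis (rather than Schwartz regularity). This is handled by a mollification argument: convolve $\Phi_i$ with a Gaussian kernel of small variance to obtain smooth approximants for which all manipulations are unambiguous, then pass to the limit using the uniform gradient bound (which controls both the Lipschitz norm and the $L^2(\gamma)$ norm of $\nabla\Phi_i$, and forces $\Phi_i$ to have finite second moments against $\gamma$, so that $\Cov(\Phi_1,\Phi_2)$ is itself well defined).
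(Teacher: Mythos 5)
This lemma is not proved in the paper; it is invoked as a black box by reference to Pastur and Shcherbina's monograph. So there is no paper proof to compare against, and the proposal must be judged on its own.

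Your proof is correct and is the standard one: reduce to the isotropic case via $\Sigma^{1/2}$, interpolate with the Mehler transform $T_\rho$, check that the two second-order terms produced by Gaussian integration by parts (one from the $\eta_i$ weight, one from the $\eta'_i$ weight) cancel exactly, apply Cauchy--Schwarz pointwise in $\rho$ using the rotation invariance of the standard Gaussian, and then mollify to remove the extra smoothness used in the interpolation step. The cancellation bookkeeping is right (coefficient $\rho$ from $\partial_{\eta_i}$ acting on $\partial_iG(\rho\eta+\sqrt{1-\rho^2}\eta')$, versus $-\rho/\sqrt{1-\rho^2}\cdot\sqrt{1-\rho^2}$ from the $\eta'_i$ term), and the boundary behaviour at $\rho\to1$ is harmless because the final integrand $\bE[\nabla F(\eta)\cdot\nabla G(\zeta_\rho)]$ is bounded by $\|\nabla F\|_\infty\|\nabla G\|_\infty$. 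The observation that bounded gradient $\Rightarrow$ Lipschitz $\Rightarrow$ at most linear growth $\Rightarrow$ $L^2(\gamma)$ is the right way to make the covariance well-defined without additional hypotheses.

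Two small remarks worth making explicit. First, the lemma as printed in the paper writes $(\nabla\Phi)^\intercal\Sigma(\nabla\Phi)$ without conjugation and omits the absolute value on $\Cov(\Phi_1,\Phi_2)$; for genuinely complex-valued $\Phi_i$ both are ill-posed (the bilinear form can be complex, and so can the covariance). What your argument actually establishes, and what the cited Pastur--Shcherbina proposition asserts, is the corrected form $|\Cov(\Phi_1,\Phi_2)|\le\sqrt{\bE[(\nabla\Phi_1)^*\Sigma(\nabla\Phi_1)]}\,\sqrt{\bE[(\nabla\Phi_2)^*\Sigma(\nabla\Phi_2)]}$ with Hermitian transpose; it would be worth flagging that the $\intercal$ in the paper is a misprint. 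Second, in the singular-$\Sigma$ perturbation step, the passage to the limit requires the partial derivatives to be continuous (as they will be under the $C^1$ reading of ``bounded partial derivatives''), or else an a.e.\ version with dominated convergence along an explicit coupling $\xi_\epsilon=\xi+\sqrt{\epsilon}\,Z$; stating the coupling makes the ``both sides are continuous in $\epsilon$'' claim airtight.
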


\bigskip

\noindent \textbf{Acknowledgments} \quad
We would like to thank Walid Hachem for pointing to us the fact that the least singular value estimate in Section \ref{sec-least singular value} does not require the existence of density for the matrix entries using an approximation argument (see the beginning of the proof of Theorem \ref{Thm-least singular value}).

\bigskip

\bibliographystyle{plainnat}
\bibliography{NonHermitian}

\begin{thebibliography}{}

\bibitem[Bai and Silverstein, 2010]{Bai2010}
Bai, Z. and Silverstein, J.~W. (2010).
\newblock {\em Spectral analysis of large dimensional random matrices}.
\newblock Springer Series in Statistics. Springer, New York, second edition.

\bibitem[Bai, 1997]{Bai1997}
Bai, Z.~D. (1997).
\newblock Circular law.
\newblock {\em Ann. Probab.}, 25(1):494--529.

\bibitem[Bordenave and Chafa\"{\i}, 2012]{Bordenave2012}
Bordenave, C. and Chafa\"{\i}, D. (2012).
\newblock Around the circular law.
\newblock {\em Probab. Surv.}, 9:1--89.

\bibitem[Bose and Bhattacharjee, 2018]{BoseBook}
Bose, A. and Bhattacharjee, M. (2018).
\newblock {\em Large Covariance and Autocovariance Matrices}.
\newblock Chapman \& Hall/CRC.

\bibitem[Bose and Hachem, 2020]{Bose2019}
Bose, A. and Hachem, W. (2020).
\newblock Smallest singular value and limit eigenvalue distribution of a class
  of non-hermitian random matrices with statistical application.
\newblock {\em Journal of Multivariate Analysis}, 178.
\newblock arXiv:1812.07237.

\bibitem[Cook, 2018]{Cook2018}
Cook, N. (2018).
\newblock Lower bounds for the smallest singular value of structured random
  matrices.
\newblock {\em Ann. Probab.}, 46(6):3442--3500.

\bibitem[Cook et~al., 2018]{Cook2018b}
Cook, N., Hachem, W., Najim, J., and Renfrew, D. (2018).
\newblock Non-hermitian random matrices with a variance profile (i):
  Deterministic equivalents and limiting esds.
\newblock {\em Electronic Journal of Probability}, 23.

\bibitem[Edelman, 1997]{Edelman1997}
Edelman, A. (1997).
\newblock The probability that a random real gaussian matrix has k real
  eigenvalues, related distributions, and the circular law.
\newblock {\em Journal of Multivariate Analysis}, 60(2):203--232.

\bibitem[Ginibre, 1964]{Ginibre1964}
Ginibre, J. (1964).
\newblock Statistical ensembles of complex, quaternion, and real matrices.
\newblock {\em Journal of Mathematical Physics}, 6(3):440--449.

\bibitem[Girko, 1984]{Girko1984}
Girko, V.~L. (1984).
\newblock The circular law.
\newblock {\em Teor. Veroyatnost. i Primenen.}, 29(4):669--679.
\newblock The translation appeared as in {\em Theory Probab. Appl.}, 29(4),
  694--706, 1985.

\bibitem[Guionnet et~al., 2011]{Guionnet2011}
Guionnet, A., Krishnapur, M., and Zeitouni, O. (2011).
\newblock The single ring theorem.
\newblock {\em Ann. of Math. (2)}, 174(2):1189--1217.

\bibitem[Horn and Johnson, 2013]{Horn2013}
Horn, R.~A. and Johnson, C.~R. (2013).
\newblock {\em Matrix analysis}.
\newblock Cambridge University Press, Cambridge, second edition.

\bibitem[Li et~al., 2019]{Li2019}
Li, Z., Lam, C., Yao, J., and Yao, Q. (2019).
\newblock On testing for high-dimensional white noise.
\newblock {\em Annals of Statistics}, 47(6):3382--3412.

\bibitem[Li et~al., 2015]{Li2015}
Li, Z., Pan, G., and Yao, J. (2015).
\newblock On singular value distribution of large-dimensional autocovariance
  matrices.
\newblock {\em Journal of Multivariate Analysis}, 137:119--140.

\bibitem[Li et~al., 2017]{Li2017}
Li, Z., Wang, Q., and Yao, J. (2017).
\newblock Identifying the number of factors from singular values of a large
  sample auto-covariance matrix.
\newblock {\em Annals of Statistics}, 45(1):257--288.

\bibitem[Male, 2012]{Male2012}
Male, C. (2012).
\newblock The norm of polynomials in large random and deterministic matrices.
\newblock {\em Probab. Theory Related Fields}, 154(3-4):477--532.
\newblock With an appendix by Dimitri Shlyakhtenko.

\bibitem[Nowak and Tarnowski, 2017]{Wojciech2017}
Nowak, M.~A. and Tarnowski, W. (2017).
\newblock Spectra of large time-lagged correlation matrices from randomn matrix
  theory.
\newblock {\em J. Stat. Mech. Theory Exp.}, (6):063405, 32.

\bibitem[Pastur, 2005]{Pastur2005}
Pastur, L.~A. (2005).
\newblock A simple approach to the global regime of {G}aussian ensembles of
  random matrices.
\newblock {\em Ukra\"{\i}n. Mat. Zh.}, 57(6):790--817.

\bibitem[Rudelson and Vershynin, 2008]{Mark2008}
Rudelson, M. and Vershynin, R. (2008).
\newblock The {L}ittlewood-{O}fford problem and invertibility of random
  matrices.
\newblock {\em Adv. Math.}, 218(2):600--633.

\bibitem[Tao, 2012]{Tao2012}
Tao, T. (2012).
\newblock {\em Topics in random matrix theory}, volume 132 of {\em Graduate
  Studies in Mathematics}.
\newblock American Mathematical Society, Providence, RI.

\bibitem[Tao and Vu, 2008]{Tao2008}
Tao, T. and Vu, V. (2008).
\newblock Random matrices: the circular law.
\newblock {\em Commun. Contemp. Math.}, 10(2):261--307.

\bibitem[Tao et~al., 2010]{Tao2010}
Tao, T., Vu, V., and Krishnapur, M. (2010).
\newblock Random matrices: Universality of esds and the circular law.
\newblock {\em Annals of Probability}, 38(5):2023--2065.

\bibitem[Vershynin, 2014]{Vershynin2014}
Vershynin, R. (2014).
\newblock Invertibility of symmetric random matrices.
\newblock {\em Random Structures Algorithms}, 44(2):135--182.

\bibitem[Wang and Yao, 2016]{Wang2016}
Wang, Q. and Yao, J. (2016).
\newblock Moment approach for singular values distribution of a large
  auto-covariance matrix.
\newblock {\em Annales de l'institut Henri Poincare (B) Probability and
  Statistics}, 52(4):1641--1666.

\end{thebibliography}

\end{document}